\theoremstyle{plain}
\newtheorem{thm}{Theorem}[section]
\newtheorem{prop}[thm]{Proposition}
\newtheorem{lem}[thm]{Lemma}
\newtheorem{cor}[thm]{Corollary}
\newtheorem{conj}[thm]{Conjecture}
\theoremstyle{definition}
\newtheorem{dfn}[thm]{Definition}
\newtheorem{exmps}[thm]{Examples}
\newtheorem{rem}[thm]{Remark}
\newtheorem{dfns-rems}[thm]{Definitions and Remarks}
\newtheorem{notas-rems}[thm]{Notations and Remarks}
\newtheorem{exmps-rems}[thm]{Examples and Remarks}
\DeclareMathOperator{\ind-match}{ind-match}
\DeclareMathOperator{\ord-match}{ord-match}
\DeclareMathOperator{\ind-match}{ind-match}
\begin{document}


\title[Regularity of squarefree part of symbolic powers]{On the Regularity of squarefree part of symbolic powers of edge ideals}


\author[S. A. Seyed Fakhari]{S. A. Seyed Fakhari}

\address{S. A. Seyed Fakhari, Departamento de Matem\'aticas\\Universidad de los Andes\\Bogot\'a\\Colombia.}

\email{s.seyedfakhari@uniandes.edu.co}

\begin{abstract}
Assume that $G$ is a graph with edge ideal $I(G)$. For every integer $s\geq 1$, we denote the squarefree part of the $s$-th symbolic power of $I(G)$ by $I(G)^{\{s\}}$. We determine an upper bound for the regularity of $I(G)^{\{s\}}$ when $G$ is a chordal graph. If $G$ is a Cameron-Walker graphs, we compute ${\rm reg}(I(G)^{\{s\}}$ in terms of the induced matching number of $G$. Moreover, for any graph $G$, we provide sharp upper bounds for ${\rm reg}(I(G)^{\{2\}})$ and ${\rm reg}(I(G)^{\{3\}})$.
\end{abstract}


\subjclass[2020]{Primary: 13D02, 05E40, 05C70}


\keywords{Castelnuovo--Mumford regularity, Edge ideal, Induced matching number, Matching number, Ordered matching number, Symbolic power}


\maketitle


\section{Introduction} \label{sec1}

Let $\mathbb{K}$ be a field and $S = \mathbb{K}[x_1,\ldots,x_n]$  be the
polynomial ring in $n$ variables over $\mathbb{K}$. Suppose that $M$ is a graded $S$-module with minimal free resolution
$$0  \longrightarrow \cdots \longrightarrow  \bigoplus_{j}S(-j)^{\beta_{1,j}(M)} \longrightarrow \bigoplus_{j}S(-j)^{\beta_{0,j}(M)}   \longrightarrow  M \longrightarrow 0.$$
The integer $\beta_{i,j}(M)$ is called the $(i,j)$th graded Betti number of $M$. The Castelnuovo--Mumford regularity (or simply, regularity) of $M$,
denoted by ${\rm reg}(M)$, is defined as
$${\rm reg}(M)=\max\{j-i|\ \beta_{i,j}(M)\neq0\},$$
and it is an important invariant in commutative algebra and algebraic geometry.

There is a natural correspondence between quadratic squarefree monomial ideals of $S$ and finite simple graphs with $n$ vertices. To every simple graph $G$ with vertex set $V(G)=\big\{x_1, \ldots, x_n\big\}$ and edge set $E(G)$, we associate its {\it edge ideal} $I=I(G)$ defined by
$$I(G)=\big(x_ix_j: x_ix_j\in E(G)\big)\subseteq S.$$

Computing and finding bounds for the regularity of edge ideals and their powers have been studied by a number of researchers (see for example \cite{b}, \cite{bbh}, \cite{bht}, \cite{dhs}, \cite{ha}, \cite{hh1}, \cite{js}, \cite{s7}, \cite{sy}, and \cite{wo}).

Let $I$ be a monomial ideal. By {\it squarefree part} of $I$, we mean the monomial ideal which is generated by squarefree monomials of $I$. In \cite{ehhs}, Erey, Herzog, Hibi and Saeedi Madani initiated the study of regularity of squarefree part of powers of edge ideals. This study was continued in \cite{eh} and \cite{s11}. In this paper, we replace ordinary powers by symbolic powers and investigate the regularity of squarefree part of symbolic powers of edge ideals. For a monomial ideal $I$ and for a positive integer $s$, let $I^{\{s\}}$ denote the squarefree part of the $s$-th symbolic power of $I$. Erey et al.
\cite[Theorem 2.1]{ehhs} proved that for every graph $G$ with induced matching number $\ind-match(G)$ and for any positive integer $s\leq \ind-match(G)$,$${\rm reg}(I(G)^{[s]})\geq s+\ind-match(G),$$where $I(G)^{[s]}$ denotes the squarefree part of $I(G)^s$. In Theorem \ref{indmatch}, we will prove that the same inequality is true if one replaces $I(G)^{[s]}$ by $I(G)^{\{s\}}$.

In Section \ref{sec3}, we study the regularity of squarefree part of symbolic power of edge ideals of chordal graphs. For any squarefree monomial ideal $I$, it is not difficult to see that $I^{\{s\}}\neq 0$ if and only if $s\leq \mathfrak{ht}(I)$, where $\mathfrak{ht}(I)$ denotes the height of $I$ (see Proposition \ref{zero}). In Theorem \ref{mainchord}, we will prove that for any chordal graph $G$ and for every integer $s$ with $1\leq s\leq \mathfrak{ht}(I(G))$, we have$${\rm reg}(I(G)^{\{s\}})\leq s+\ord-match(G)$$where $\ord-match(G)$ denotes the ordered matching number of $G$ (see Definition \ref{om}). Moreover, we will see in Remark \ref{sharpchord} that the above inequality is sharp. On the other hand, we know from \cite{s10} that the regularity of $I(G)^{(s)}$ only depends on the induced matching number of $G$. Remark \ref{indchord} shows that one can not expect the same behavior for the regularity of $I(G)^{\{s\}}$.

In Section \ref{sec4}, we determine the regularity of squarefree part of symbolic powers of edge ideals of Cameron-Walker graphs. As the main result of that section, we will prove in Theorem \ref{maincw} that for any Cameron-Walker graph $G$ and for every integer $s$ with $1\leq s\leq \mathfrak{ht}(I(G))$,$${\rm reg}(I(G)^{\{s\}})=s+\ind-match(G).$$We know from \cite[Theorem 4.3]{s11} that for any Cameron-Walker graph $G$ and for every integer $s$ with $1\leq s\leq {\rm match}(G)$, we have ${\rm reg}(I(G)^{[s]})=s+\ind-match(G)$. Thus, for this class of graphs, the equality ${\rm reg}(I(G)^{\{s\}})={\rm reg}(I(G)^{[s]})$ holds, for each integer $s$ with $1\leq s\leq {\rm match}(G)$. It is natural to ask whether the same is true for any arbitrary graph $G$. However, as we will see in Remark \ref{orsynoeq}, the answer is negative.

In Sections \ref{sec5} and \ref{sec6}, we determine sharp upper bounds for the regularity of $I(G)^{\{2\}}$ and $I(G)^{\{3\}}$. More precisely, we will prove in Theorem \ref{sym2} that for any graph $G$ with $I(G)^{\{2\}}\neq 0$, we have$${\rm reg}(I(G)^{\{2\}})\leq \min\big\{{\rm reg}(I(G))+2, {\rm match}(G)+2\big\},$$where ${\rm match}(G)$ denotes the matching number of $G$. In \cite{ehhs}, Erey et al. conjectured that for any graph $G$ and for any integer $s$ with $I(G)^{[s]}\neq 0$, the inequality ${\rm reg}(I(G)^{[s]})\leq {\rm match}(G)+s$ holds. We expect the same statement to be true if one replaces $I(G)^{[s]}$ by $I(G)^{\{s\}}$. So, we propose the following conjecture.

\begin{conj} \label{conjsym}
For any graph $G$ and for any integer $s$ with $I(G)^{\{s\}}\neq 0$, we have ${\rm reg}(I(G)^{\{s\}})\leq {\rm match}(G)+s$.
\end{conj}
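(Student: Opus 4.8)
The plan is to prove the equivalent statement $\operatorname{reg}\!\big(S/I(G)^{\{s\}}\big)\le \operatorname{match}(G)+s-1$ by induction on the number of vertices of $G$. Isolated vertices may be discarded without affecting either side, and the case $s=1$ is the known bound $\operatorname{reg}(S/I(G))\le\operatorname{match}(G)$; these serve as base cases. Everything rests on the combinatorial description of $I(G)^{\{s\}}$: writing $\tau(H)$ for the minimum size of a vertex cover of a graph $H$, a squarefree monomial $x_A:=\prod_{i\in A}x_i$ lies in $I(G)^{\{s\}}$ if and only if $\tau\!\big(G[A]\big)\ge s$. This follows from the primary decomposition $I(G)^{(s)}=\bigcap_C\mathfrak p_C^{\,s}$, the intersection being over the minimal vertex covers $C$ of $G$, combined with the identity $\min_C|A\cap C|=\tau(G[A])$.

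Fix a vertex $v$ and use the short exact sequence
$$0\longrightarrow \frac{S}{\big(I(G)^{\{s\}}:x_v\big)}(-1)\ \xrightarrow{\ \cdot\, x_v\ }\ \frac{S}{I(G)^{\{s\}}}\longrightarrow \frac{S}{\big(I(G)^{\{s\}},\,x_v\big)}\longrightarrow 0,$$
so that $\operatorname{reg}\!\big(S/I(G)^{\{s\}}\big)\le\max\big\{\operatorname{reg}\!\big(S/(I(G)^{\{s\}}:x_v)\big)+1,\ \operatorname{reg}\!\big(S/(I(G)^{\{s\}},x_v)\big)\big\}$. The second term is harmless: $(I(G)^{\{s\}},x_v)=(x_v)+I(G-v)^{\{s\}}$, so, with $S'$ the polynomial ring in the remaining variables, $\operatorname{reg}\!\big(S/(I(G)^{\{s\}},x_v)\big)=\operatorname{reg}\!\big(S'/I(G-v)^{\{s\}}\big)\le\operatorname{match}(G-v)+s-1\le\operatorname{match}(G)+s-1$ by induction. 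Everything therefore reduces to the bound $\operatorname{reg}\!\big(S/(I(G)^{\{s\}}:x_v)\big)\le\operatorname{match}(G)+s-2$.

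From the description above and the elementary identity $\tau\!\big(G[B\cup\{v\}]\big)=\min\{\tau(G[B])+1,\ |N_G(v)\cap B|+\tau(G[B\setminus N_G(v)])\}$ one gets
$$\big(I(G)^{\{s\}}:x_v\big)\ =\ I(G)^{\{s-1\}}\cap\mathfrak b_v,\qquad \mathfrak b_v:=\sum_{j\ge 0}\ \Big(\sum_{\substack{T\subseteq N_G(v)\\ |T|=j}}x_T\Big)\ I\!\big(G-N_G[v]\big)^{\{s-j\}},$$
with the convention $I(H)^{\{k\}}:=S$ for $k\le 0$; for $s=1$ this recovers $(I(G):x_v)=I(G-N_G[v])+(x_u:u\in N_G(v))$. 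When $v$ is a \emph{leaf}, say $N_G(v)=\{u\}$, only $j=0,1$ contribute, and since $I(G-\{u,v\})^{\{s\}}$ and $x_u\,I(G-\{u,v\})^{\{s-1\}}$ both lie in $I(G)^{\{s-1\}}$, the intersection collapses: $(I(G)^{\{s\}}:x_v)=\mathfrak b_v=I(G-\{u,v\})^{\{s\}}+x_u\,I(G-\{u,v\})^{\{s-1\}}$. Applying the same short exact sequence to $\mathfrak b_v$ with respect to $x_u$ gives $(\mathfrak b_v:x_u)=I(G-\{u,v\})^{\{s-1\}}$ and $(\mathfrak b_v,x_u)=(x_u)+I(G-\{u,v\})^{\{s\}}$, so induction yields $\operatorname{reg}(S'/\mathfrak b_v)\le\operatorname{match}(G-\{u,v\})+s-1\le\operatorname{match}(G)+s-2$, using $\operatorname{match}(G-\{u,v\})\le\operatorname{match}(G)-1$. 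This settles the conjecture for every graph that has a leaf.

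The essential difficulty is the leaf-free case, where every vertex has degree at least $2$. There the intersection $I(G)^{\{s-1\}}\cap\mathfrak b_v$ does not simplify, because the summands $x_T\,I(G-N_G[v])^{\{s-|T|\}}$ with $|T|\ge2$ need not belong to $I(G)^{\{s-1\}}$. This is exactly the feature separating symbolic squarefree powers from the ordinary squarefree powers of \cite{ehhs}: membership in $I(G)^{[s]}$ is controlled by matching numbers of induced subgraphs, which decrease by at most one when an edge is deleted, whereas membership in $I(G)^{\{s\}}$ is controlled by vertex cover numbers, which may decrease by two when the endpoints of an edge are removed (as for $K_3$, where $\tau=2$ collapses to $0$), so the colon ideal need not be a squarefree symbolic power of an induced subgraph of $G$---compare $I(K_3)^{\{2\}}\ne 0$ with $I(K_3)^{[2]}=0$. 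A complete argument would therefore require, for leaf-free graphs, either a cleverer choice of $v$, or an iteration of the displayed short exact sequence over all vertices of $N_G(v)$, or---most plausibly---a strengthening of the inductive statement to a class of ideals that is closed under $J\mapsto(J:x_v)$ and contains the auxiliary ``mixed symbolic'' ideals $\mathfrak b_v$, together with a matching generalization of the invariant $\operatorname{match}(G)+s$. This last step is the crux, and is presumably why the statement is offered as a conjecture.
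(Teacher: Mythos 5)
The statement you set out to prove is Conjecture \ref{conjsym}, which the paper itself leaves open: it is only verified there for $s=1$ (via H\`a--Van Tuyl), for $s=2$ (Theorem \ref{sym2}), and for chordal and Cameron--Walker graphs (Theorems \ref{mainchord} and \ref{maincw}), while for $s=3$ only the weaker bounds of Theorem \ref{mainpo3} are obtained. So there is no proof in the paper to compare against, and your proposal does not supply one either: as you acknowledge, your induction closes only when $G$ has a leaf, and the leaf-free case --- which is exactly where the difficulty sits, since there $(I(G)^{\{s\}}:x_v)$ is a genuine intersection $I(G)^{\{s-1\}}\cap\mathfrak b_v$ and is no longer the squarefree symbolic power of an induced subgraph, so the invariant ${\rm match}(G)+s$ has no evident inductive counterpart --- is left entirely unresolved. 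That is a genuine gap, not a routine verification; the sound portions of your argument (the vertex-cover criterion $x_A\in I(G)^{\{s\}}\iff\tau(G[A])\ge s$, the reduction via the short exact sequence and Lemma \ref{del}, and the complete treatment of graphs possessing a pendant edge, which parallels the colon computation of Lemma \ref{chordcolon}) are a reasonable first step, but they do not reach the conjecture.

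One smaller inaccuracy: the displayed identity $(I(G)^{\{s\}}:x_v)=I(G)^{\{s-1\}}\cap\mathfrak b_v$ is not literally correct as an equality of ideals. Your membership computation shows the two sides contain the same squarefree monomials not divisible by $x_v$, but the right-hand side may contain monomials divisible by $x_v$ that the left-hand side does not: for the star $K_{1,2}$ with center $v$, leaves $a,b$, and $s=2$, one has $I(G)^{\{2\}}=0$, hence the colon is zero, whereas $x_vx_ax_b$ lies in $I(G)\cap\mathfrak b_v$ (the term of $\mathfrak b_v$ with $T=N_G(v)$ contributes $x_ax_b$). This does not damage your leaf argument, where the simplified description of the colon is verified directly and correctly, but in the general case the identity should be phrased as an equality of the sets of squarefree monomials in the variables other than $x_v$. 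Even with that repair, the crux you name --- leaf-free graphs, where removing the closed neighborhood of an edge can drop vertex cover numbers by two, as in $K_3$ --- remains the whole problem, which is presumably why the paper states the result only as a conjecture.
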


Note that Conjecture \ref{conjsym} is true for\\
$\bullet$ $s=1$ (\cite[Theorem 6.7]{hv}),\\
$\bullet$ $s=2$ (Theorem \ref{sym2}),\\
$\bullet$ chordal graphs (Theorem \ref{mainchord}), and\\
$\bullet$ Cameron-Walker graphs (Theorem \ref{maincw}).

Unfortunately, we are not able to verify Conjecture \ref{conjsym} for $s=3$. However, we will prove in Theorem \ref{mainpo3} that for any graph $G$ with $n$ vertices such that $I(G)^{\{3\}}\neq 0$, we have$${\rm reg}(I(G)^{\{3\}})\leq\min\Big\{\Big\lfloor\frac{n}{2}\Big\rfloor+3, {\rm reg}(I(G))+4\Big\}.$$


\section{Preliminaries and basic results} \label{sec2}

In this section, we provide the definitions and basic facts which will be used in the next sections.

All graphs in this paper are simple, i.e., have no loops and no multiple edges. Let $G$ be a graph with vertex set $V(G)=\big\{x_1, \ldots,
x_n\big\}$ and edge set $E(G)$. We identify the vertices (resp. edges) of $G$ with variables (resp. corresponding quadratic monomials) of $S$. For a vertex $x_i$, the {\it neighbor set} of $x_i$ is $N_G(x_i)=\{x_j\mid x_ix_j\in E(G)\}$. We set $N_G[x_i]=N_G(x_i)\cup \{x_i\}$. The {\it degree} of $x_i$, denoted by ${\rm deg}_G(x_i)$ is the cardinality of $N_G(x_i)$. A vertex of degree one is called a {\it leaf}. An edge $e\in E(G)$ is a {\it pendant edge}, if it is incident to a leaf. A {\it pendant triangle} of $G$ is a triangle $T$ of $G$, with the property that exactly two vertices of $T$ have degree two in $G$. A {\it star triangle} is the graph consisting of finitely many triangles sharing exactly one vertex. The graph $G$ is {\it bipartite} if there exists a partition $V(G)=A\cup B$ such
that each edge of $G$ is of the form $x_ix_j$ with $x_i\in A$ and $x_j\in B$. If moreover, every vertex of $A$ is adjacent to every vertex of $B$, then we say that $G$ is a {\it complete bipartite} graph and denote it by $K_{a,b}$, where $a=|A|$ and $b=|B|$. A subgraph $H$ of $G$ is called {\it induced} provided that two vertices of $H$ are adjacent if and only if they are adjacent in $G$. A graph $G$ is called {\it chordal} if it has no induced cycle of length at least four. A subset $W$ of $V(G)$ is a {\it clique} of $G$ if every two distinct vertices of $W$ are adjacent in $G$. A vertex $x$ of $G$ is a {\it simplicial vertex} if $N_G(x)$ is a clique. It is well-known that every chordal graph has a simplicial vertex. The {\it complementary graph} $\overline{G}$ is a graph with $V(\overline{G})=V(G)$ and $E(\overline{G})=\{x_ix_j\mid x_ix_j\notin E(G)\}$. For every subset $U\subset V(G)$, the graph $G\setminus U$ has vertex set $V(G\setminus U)=V(G)\setminus U$ and edge set $E(G\setminus U)=\{e\in E(G)\mid e\cap U=\emptyset\}$. If $U=\{x_i\}$ is a singleton, we write $G\setminus x_i$ instead of $G\setminus \{x_i\}$. A subset $A$ of $V(G)$ is called an {\it independent subset} of $G$ if there are no edges among the vertices of $A$.

Let $G$ be a graph. A subset $C$ of $V(G)$ is called a {\it vertex cover} of $G$ if every edge of $G$ is incident to at least one vertex of $C$. A vertex cover $C$ is a {\it minimal vertex cover} if no proper subset of $C$ is a vertex cover of $G$. The set of minimal vertex covers of $G$ will be denoted by $\mathcal{C}(G)$. For every subset $A$ of $\big\{x_1, \ldots, x_n\big\}$, $\mathfrak{p}_A$ denotes the monomial prime ideal which is generated by the variables belonging to $A$. It is well-known that for every graph $G$ with edge ideal $I(G)$,$$I(G)=\bigcap_{C\in \mathcal{C}(G)}\mathfrak{p}_C.$$In particular, the height of $I(G)$ is the smallest cardinality of minimal vertex covers of $G$.

Let $G$ be a graph. A subset $M\subseteq E(G)$ is a {\it matching} if $e\cap e'=\emptyset$, for every pair of edges $e, e'\in M$. The cardinality of the largest matching of $G$ is called the {\it matching number} of $G$ and is denoted by ${\rm match}(G)$. It is clear that for every positive integer $s$, the ideal $I(G)^{[s]}$ is generated by monomials of the form $e_1\ldots e_s$, where $\{e_1, \ldots, e_s\}$ is a matching of $G$. A matching $M$ of $G$ is an {\it induced matching} of $G$ if for every pair of edges $e, e'\in M$, there is no edge $f\in E(G)\setminus M$ with $f\subset e\cup e'$. The cardinality of the largest induced matching of $G$ is the {\it induced  matching number} of $G$ and is denoted by $\ind-match(G)$.

\begin{dfn} \label{om}
Let $G$ be a graph, and let $M=\{a_ib_i\mid 1\leq i\leq r\}$ be a
nonempty matching of $G$. We say that $M$ is an {\it ordered matching} of
$G$ if the following hold:
\begin{itemize}
\item[(1)] $A:=\{a_1,\ldots,a_r\} \subseteq V(G)$ is a set of
    independent vertices of $G$; and

\item[(2)] $a_ib_j\in E(G)$ implies that $i\leq j$.
\end{itemize}
The cardinality of the largest ordered matching of $G$ is the {\it ordered matching number} of $G$ and is denoted by $\ord-match(G)$.
\end{dfn}

A graph $G$ is said to be a Cameron-Walker graph if ${\rm match}(G)=\ind-match(G)$. It is clear that a graph is Cameron-Walker if and only if all its connected components are Cameron-Walker. By \cite[Theorem 1]{cw} (see also \cite[Remark 0.1]{hhko}), a connected graph $G$ is a Cameron-Walker graph if and only if

$\bullet$ it is a star graph, or

$\bullet$ it is a star triangle, or

$\bullet$ it consists of a connected bipartite graph $H$ by vertex partition $V(H)=X\cup Y$ with the property that  there is at least one pendant edge attached  to each  vertex of $X$ and there may be some pendant triangles attached to each vertex of $Y$.

For a monomial ideal $I$, the set of its minimal primes will be denoted by ${\rm Min}(I)$. For every integer $s\geq 1$, the $s$-th {\it symbolic power} of $I$,
denoted by $I^{(s)}$, is defined to be$$I^{(s)}=\bigcap_{\frak{p}\in {\rm Min}(I)} {\rm Ker}(S\rightarrow (S/I^s)_{\frak{p}}).$$Let $I$ be a squarefree monomial ideal in $S$ and suppose that $I$ has the irredundant
primary decomposition $$I=\frak{p}_1\cap\ldots\cap\frak{p}_r,$$ where every
$\frak{p}_i$ is an ideal generated by a subset of the variables of
$S$. It follows from \cite[Proposition 1.4.4]{hh} that for every integer $s\geq 1$, $$I^{(s)}=\frak{p}_1^s\cap\ldots\cap
\frak{p}_r^s.$$We set $I^{(s)}=S$, for any integer $s\leq 0$. As we mentioned in Section \ref{sec1}, the squarefree part of $I^{(s)}$ will be denoted by $I^{\{s\}}$.

Let $I$ be a monomial ideal. The unique set of minimal monomial generators of $I$ is denoted by $G(I)$. Moreover, ${\rm Ass}(S/I)$ denotes the the set of associated primes of $S/I$. The ideal $I$ is called {\it unmixed}, if the associated primes of $S/I$ have the same height.

\begin{prop} \label{zero}
Let $I$ be a squarefree monomial ideal and $s\geq 1$ be a positive integer. Then $I^{\{s\}}\neq 0$ if and only if $s\leq \mathfrak{ht}(I)$. Moreover, if $I$ is unmixed, then $I^{\{\mathfrak{ht}(I)\}}$ is a principal ideal.
\end{prop}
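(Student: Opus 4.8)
The plan is to translate the definition of $I^{\{s\}}$ into a combinatorial statement about the exponents of monomials, using the decomposition $I^{(s)} = \mathfrak{p}_1^s \cap \cdots \cap \mathfrak{p}_r^s$ recalled from \cite[Proposition 1.4.4]{hh}, where $I = \mathfrak{p}_1 \cap \cdots \cap \mathfrak{p}_r$ is the irredundant primary decomposition into monomial primes generated by subsets of the variables. First I would observe that a squarefree monomial $u = \prod_{i \in W} x_i$ (with $W \subseteq \{1,\dots,n\}$) lies in $\mathfrak{p}_j^s$ if and only if $\mathrm{deg}(u) \geq s$ as an element of the polynomial ring modulo the variables outside $\mathfrak{p}_j$ — more precisely, $u \in \mathfrak{p}_j^s$ iff $|W \cap S_j| \geq s$, where $S_j$ is the set of variables generating $\mathfrak{p}_j$. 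Hence $u \in I^{(s)}$ iff $|W \cap S_j| \geq s$ for every $j$. Since the $S_j$ are exactly the minimal vertex covers (in the edge-ideal case, but this works for any squarefree ideal), the complement sets $\overline{S_j}$ are the maximal independent sets / the facets of the independence complex. Thus the nonexistence of a squarefree monomial in $I^{(s)}$ is equivalent to saying that no subset $W$ meets every $S_j$ in at least $s$ elements.

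For the first assertion, suppose $s \leq \mathfrak{ht}(I) = \min_j |S_j|$. I would simply take $W = \{1,\dots,n\}$ (or any set containing all the variables), so that $|W \cap S_j| = |S_j| \geq \mathfrak{ht}(I) \geq s$ for all $j$; the monomial $x_1 \cdots x_n$ then lies in $I^{(s)}$ and is squarefree, so $I^{\{s\}} \neq 0$. Conversely, if $s > \mathfrak{ht}(I)$, pick an index $j_0$ with $|S_{j_0}| = \mathfrak{ht}(I) < s$; then for any squarefree $u = \prod_{i\in W} x_i$ we have $|W \cap S_{j_0}| \leq |S_{j_0}| < s$, so $u \notin \mathfrak{p}_{j_0}^s \supseteq I^{(s)}$, whence $I^{(s)}$ contains no nonzero squarefree monomial and $I^{\{s\}} = 0$.

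For the second assertion, assume $I$ is unmixed, so $|S_1| = \cdots = |S_r| = \mathfrak{ht}(I) =: h$, and set $s = h$. By the criterion above, a squarefree $u = \prod_{i\in W}x_i$ lies in $I^{(h)}$ iff $|W \cap S_j| \geq h = |S_j|$ for every $j$, i.e. iff $S_j \subseteq W$ for every $j$. Equivalently, $W \supseteq \bigcup_{j=1}^r S_j$. Therefore the unique minimal squarefree monomial in $I^{(h)}$ is $v := \prod_{i \in \bigcup_j S_j} x_i$, and every other squarefree monomial of $I^{(h)}$ is a multiple of $v$; consequently $I^{\{h\}} = (v)$ is principal. (In the edge-ideal language, $\bigcup_j S_j$ is the union of all minimal vertex covers, i.e. the set of non-isolated vertices together with... one should just phrase it via the $S_j$ to stay general.)

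The only mild obstacle is bookkeeping: one must be careful that "squarefree part" means the ideal generated by the squarefree monomials belonging to $I^{(s)}$, and check that this ideal is nonzero precisely when such a squarefree monomial exists, and that in the unmixed case every squarefree monomial in $I^{(h)}$ is divisible by $v$ so that $v$ alone generates. Both points are immediate from the exponent characterization, so I expect no real difficulty; the heart of the argument is the elementary observation $u \in \mathfrak{p}_j^s \iff |\mathrm{supp}(u) \cap S_j| \geq s$ for squarefree $u$.
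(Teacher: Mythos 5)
Your proposal is correct and follows essentially the same route as the paper: both rest on the elementary observation that a squarefree monomial lies in $\mathfrak{p}_j^s$ exactly when its support meets the generators of $\mathfrak{p}_j$ in at least $s$ variables, applied to the decomposition $I^{(s)}=\mathfrak{p}_1^s\cap\cdots\cap\mathfrak{p}_r^s$, with $x_1\cdots x_n$ witnessing nonvanishing for $s\leq\mathfrak{ht}(I)$ and a minimal-height prime killing it for $s>\mathfrak{ht}(I)$. Your unmixed case is the same argument as the paper's, only made explicit: you exhibit the generator $\prod_{x_i\in\bigcup_j S_j}x_i$ directly, where the paper instead notes that each squarefree part of $\mathfrak{p}_j^{\mathfrak{ht}(I)}$ is principal and that an intersection of principal monomial ideals is principal.
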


\begin{proof}
Let $\mathfrak{p}=(x_{i_1}, \ldots, x_{i_k})$ be a monomial prime ideal. Then for every positive integer $s\leq k$, we have $x_1x_2 \cdots x_n\in \mathfrak{p}^s$. Also, it is clear that for each integer $s>k$, the ideal $\mathfrak{p}^s$ is not squarefree. Thus, for any monomial prime ideal $\mathfrak{p}$, the squarefree part of  $\mathfrak{p}^s$ is nonzero if and only if $s\leq \mathfrak{ht}(\mathfrak{p})$. Since $I^{\{s\}}$ is the intersection of squarefree parts of powers of finitely many prime ideals, it follows that $I^{\{s\}}\neq 0$ if and only if $s\leq \mathfrak{ht}(I)$.

To prove the last assertion, let $k$ denote the height of $I$. For any monomial prime ideal $\mathfrak{p}=(x_{i_1}, \ldots, x_{i_k})\in {\rm Ass}(S/I)$, the squarefree part of  $\mathfrak{p}^k$ is the principal ideal generated by $x_{i_1} \cdots x_{i_k}$. Since the intersection of finitely many principal monomial ideals is again a principal ideal, we conclude that $I^{\{k\}}$ is a principal ideal.
\end{proof}

We close this section by the following proposition which is an immediate consequence of restriction lemma \cite[Lemma 2.4]{hhz} and has been already observed by Erey et al. \cite{ehhs}.

\begin{prop} \label{compsq}
Let $I$ be a monomial ideal and suppose that $J$ is the squarefree part of $I$. If $J$ is a nonzero ideal, then ${\rm reg}(J)\leq {\rm reg}(I)$.
\end{prop}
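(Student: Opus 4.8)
The plan is to compare the $\mathbb{Z}^n$-graded Betti numbers of $J$ with those of $I$, exploiting that $I$ and $J$ have exactly the same \emph{squarefree} monomials. The first step is the elementary observation that a squarefree monomial $u$ lies in $J$ if and only if it lies in $I$: every squarefree monomial of $I$ belongs to $G(J)$ by the very definition of the squarefree part, and conversely any squarefree monomial of $J$ lies in $I$ because $J\subseteq I$. Equivalently, for every squarefree monomial $w=x_{i_1}\cdots x_{i_k}$, the monomials dividing $w$ that belong to $I$ are exactly those dividing $w$ that belong to $J$, since every divisor of $w$ is itself squarefree.

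Next I would invoke the restriction lemma in its incarnation as the combinatorial (Hochster-type) description of multigraded Betti numbers: for a monomial ideal $L$ and a multidegree $\mathbf{a}$, the Betti number $\beta_{i,\mathbf{a}}(S/L)$ is computed from the simplicial complex on $\operatorname{supp}(\mathbf{a})$ recording which monomials dividing $x^{\mathbf{a}}$ lie in $L$; in particular it depends only on the set $\{\,m:\ m\mid x^{\mathbf{a}},\ m\in L\,\}$. Taking $\mathbf{a}$ squarefree, the first step gives $\beta_{i,\mathbf{a}}(S/I)=\beta_{i,\mathbf{a}}(S/J)$ for every squarefree $\mathbf{a}$ and every $i$. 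Since $J$ is a squarefree monomial ideal, its minimal free resolution is $\mathbb{Z}^n$-graded with all shifts squarefree, so $\beta_{i,\mathbf{a}}(S/J)=0$ whenever $\mathbf{a}$ is not squarefree. Hence every nonzero multigraded Betti number of $S/J$ sits in a squarefree degree where it agrees with the corresponding Betti number of $S/I$, so
$${\rm reg}(S/J)=\max\big\{\,|\mathbf{a}|-i:\ \beta_{i,\mathbf{a}}(S/J)\neq 0\,\big\}\ \leq\ {\rm reg}(S/I),$$
and therefore ${\rm reg}(J)={\rm reg}(S/J)+1\leq {\rm reg}(S/I)+1={\rm reg}(I)$; the hypothesis $J\neq 0$ is what makes these invariants defined and the shift by $1$ legitimate.

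I do not expect any real obstacle; the only point requiring care is to match the precise form of the restriction lemma being cited. If it is stated only for squarefree monomial ideals, one first passes to the polarization $I^{\mathrm{pol}}$, a squarefree monomial ideal with $\beta_{i,j}(I^{\mathrm{pol}})=\beta_{i,j}(I)$, and notes that $J$ is, after renaming variables, exactly the restriction of $I^{\mathrm{pol}}$ to the set $W$ of "first copies'' of the variables: the polarized image of a non-squarefree generator of $I$ involves a second copy of some variable, hence is not supported on $W$, whereas the polarized image of a squarefree generator is. Then the restriction lemma applies literally and yields $\beta_{i,j}(J)=\beta_{i,j}\big((I^{\mathrm{pol}})_W\big)\leq\beta_{i,j}(I^{\mathrm{pol}})=\beta_{i,j}(I)$, whence ${\rm reg}(J)\leq{\rm reg}(I)$.
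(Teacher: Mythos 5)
Your argument is correct and in substance coincides with the paper's: the paper simply cites the Herzog--Hibi--Zheng restriction lemma (applied in the multidegree $\mathbf{a}=(1,\dots,1)$, using that $J$ is generated by the squarefree members of $G(I)$), and your comparison of multigraded Betti numbers in squarefree degrees via the upper Koszul/Hochster-type description is exactly a self-contained proof of that instance of the lemma. The polarization fallback is harmless but unnecessary, since the restriction lemma holds for arbitrary monomial ideals.
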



\section{A Lower bound} \label{sec7}

Erey et al. \cite[Theorem 2.1]{ehhs} proved that for every graph $G$ and for each positive integer $s\leq \ind-match(G)$, the quantity $s+\ind-match(G)$ is a lower bound for the regularity of $I(G)^{[s]}$. In this short section, we prove that the same is true if one replaces $I(G)^{[s]}$ by $I(G)^{\{s\}}$ (see Theorem \ref{indmatch}). We start the proof by stating the following lemma which is an immediate consequence of \cite[Lemma 3.2]{s10}.

\begin{lem} \label{del}
Let $G$ be a graph and $x$ be a vertex of $G$. Then for every integer $s\geq 1$,$$I(G)^{\{s\}}+(x)=I(G\setminus x)^{\{s\}}+(x).$$
\end{lem}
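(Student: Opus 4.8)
The plan is to deduce the identity from its symbolic-power analogue
$$I(G)^{(s)}+(x)=I(G\setminus x)^{(s)}+(x),$$
which is \cite[Lemma 3.2]{s10}, by observing that forming the squarefree part of a monomial ideal is compatible with adding the principal ideal $(x)$.

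Essentially the only step is the following elementary observation, which I would state and prove first: for any monomial ideal $J\subseteq S$ and any variable $x$, the squarefree part of $J+(x)$ equals the sum of $(x)$ and the squarefree part of $J$. Indeed, if $m$ is a squarefree monomial with $m\in J+(x)$, then, since $J+(x)$ is a monomial ideal, either $x\mid m$, in which case $m\in(x)$, or $x\nmid m$, in which case $m\in J$ and hence $m$ lies in the squarefree part of $J$; conversely $x$ and every squarefree monomial of $J$ are squarefree elements of $J+(x)$.

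Granting this, I would apply it with $J=I(G)^{(s)}$ and, separately, with $J=I(G\setminus x)^{(s)}$. Since by definition $I(H)^{\{s\}}$ is the squarefree part of $I(H)^{(s)}$ for any graph $H$, this shows that $I(G)^{\{s\}}+(x)$ is the squarefree part of $I(G)^{(s)}+(x)$ and that $I(G\setminus x)^{\{s\}}+(x)$ is the squarefree part of $I(G\setminus x)^{(s)}+(x)$. By \cite[Lemma 3.2]{s10} the ideals $I(G)^{(s)}+(x)$ and $I(G\setminus x)^{(s)}+(x)$ coincide, so their squarefree parts coincide, which is exactly the asserted equality.

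If one prefers a self-contained proof of $I(G)^{(s)}+(x)=I(G\setminus x)^{(s)}+(x)$, it can be obtained from the primary decomposition $I(G)^{(s)}=\bigcap_{C\in\mathcal{C}(G)}\mathfrak{p}_C^{s}$ together with the facts that $(J_1\cap J_2)+(x)=(J_1+(x))\cap(J_2+(x))$ for monomial ideals and that $\mathfrak{p}_C^{s}+(x)=\mathfrak{p}_{C\setminus\{x\}}^{s}+(x)$; one then checks that the family $\{\,C\setminus\{x\}: C\in\mathcal{C}(G)\,\}$ and the family $\mathcal{C}(G\setminus x)$ yield the same intersection of the ideals $\mathfrak{p}_D^{s}+(x)$, using that every minimal vertex cover of $G\setminus x$ arises (as $C$ or as $C\setminus\{x\}$) from some $C\in\mathcal{C}(G)$, and that each $C\setminus\{x\}$ is a vertex cover of $G\setminus x$ and hence contains a minimal one, making it redundant in the intersection. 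I do not expect any serious obstacle; the only points needing care are the compatibility of intersection with $+(x)$ for monomial ideals and the harmless loss of minimality when a minimal vertex cover of $G$ is restricted to $G\setminus x$.
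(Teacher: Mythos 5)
Your proof is correct and follows essentially the same route as the paper, which simply declares the lemma an immediate consequence of \cite[Lemma 3.2]{s10}: you pass from the known identity $I(G)^{(s)}+(x)=I(G\setminus x)^{(s)}+(x)$ to its squarefree-part version via the (correct) observation that the squarefree part of $J+(x)$ is the squarefree part of $J$ plus $(x)$. The extra self-contained argument via primary decomposition is sound but not needed.
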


The following proposition is the main step in the proof of Theorem \ref{indmatch}.

\begin{prop} \label{ind}
Let $G$ be a graph and assume that $H$ is an induced subgraph of $G$. Also, let $s$ be an integer such that $1\leq s\leq \mathfrak{ht}(I(H))$. Then for every pair of integers $i$ and $j$, we have$$\beta_{i,j}(I(H)^{\{s\}})\leq \beta_{i,j}(I(G)^{\{s\}}).$$In particular, ${\rm reg}(I(H)^{\{s\}})\leq {\rm reg}(I(G)^{\{s\}})$.
\end{prop}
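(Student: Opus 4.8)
The plan is to prove the statement by induction on the number of vertices of $G$, reducing from $G$ to a proper induced subgraph by deleting a single well-chosen vertex, and comparing graded Betti numbers through a short exact sequence. If $V(H)=V(G)$, then $H=G$ (an induced subgraph on all vertices is the graph itself) and there is nothing to prove. So assume $V(H)\subsetneq V(G)$ and pick a vertex $x\in V(G)\setminus V(H)$; then $H$ is an induced subgraph of $G'':=G\setminus x$, and by induction it suffices to prove the inequality $\beta_{i,j}(I(G'')^{\{s\}})\leq\beta_{i,j}(I(G)^{\{s\}})$ for the single deletion $G''=G\setminus x$, provided $s\le\mathfrak{ht}(I(G''))$ — which holds since deleting a vertex can only decrease the height, and in fact we need $s\le\mathfrak{ht}(I(H))\le\mathfrak{ht}(I(G''))$, so Proposition~\ref{zero} guarantees all the ideals in sight are nonzero.

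The key tool is the short exact sequence obtained by quotienting out the variable $x$. Write $I=I(G)^{\{s\}}$ and consider
$$0\longrightarrow (I:x)/I \xrightarrow{\;\cdot x\;} S/I \longrightarrow S/(I+(x)) \longrightarrow 0.$$
By Lemma~\ref{del}, $I+(x)=I(G\setminus x)^{\{s\}}+(x)$, so $S/(I+(x))\cong \big(S/x\big)\big/ I(G\setminus x)^{\{s\}}$, whose graded Betti numbers over $S$ are exactly those of $I(G\setminus x)^{\{s\}}$ over the smaller polynomial ring (shifted appropriately, but preserving $j-i$). The long exact sequence in $\mathrm{Tor}$ then reads
$$\cdots\to \mathrm{Tor}_i^S(\mathbb{K},(I:x)/I)_j \to \mathrm{Tor}_i^S(\mathbb{K},S/I)_j \to \mathrm{Tor}_i^S(\mathbb{K},S/(I+(x)))_j \to \mathrm{Tor}_{i-1}^S(\mathbb{K},(I:x)/I)_j\to\cdots,$$
so to conclude $\beta_{i,j}(S/(I+(x)))\leq \beta_{i,j}(S/I) + \text{(stuff)}$ one needs a splitting or a degree/dimension argument that kills the connecting contribution. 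The cleanest route is to show the map $\mathrm{Tor}_i^S(\mathbb{K}, S/I)_j\to \mathrm{Tor}_i^S(\mathbb{K},S/(I+(x)))_j$ is surjective — equivalently that the connecting homomorphism $\mathrm{Tor}_i^S(\mathbb{K},S/(I+(x)))_j\to\mathrm{Tor}_{i-1}^S(\mathbb{K},(I:x)/I)_j$ vanishes — which is exactly the content of the restriction/splitting lemma \cite[Lemma 2.4]{hhz} already cited in the excerpt, applied to the variable $x$. Indeed that lemma is designed precisely to give $\beta_{i,j}(I)\ge \beta_{i,j}(I\,\text{restricted away from }x)$ for monomial ideals, and here $I(G\setminus x)^{\{s\}}$ is (after the identification via Lemma~\ref{del}) the restriction of $I(G)^{\{s\}}$ to the variables other than $x$.

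Thus the concrete plan is: (i) reduce to a one-vertex deletion by induction on $|V(G)|$, handling the base case $V(H)=V(G)$ trivially; (ii) verify the height hypothesis propagates, so Proposition~\ref{zero} keeps every $I(\cdot)^{\{s\}}$ nonzero along the chain; (iii) invoke Lemma~\ref{del} to identify $S/(I(G)^{\{s\}}+(x))$ with $S/(I(G\setminus x)^{\{s\}}+(x))$; (iv) apply the restriction lemma \cite[Lemma 2.4]{hhz} with respect to $x$ to conclude $\beta_{i,j}(I(G\setminus x)^{\{s\}})\le\beta_{i,j}(I(G)^{\{s\}})$ for all $i,j$; (v) compose the inequalities along the deletion chain from $G$ down to $H$. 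The ``in particular'' statement about regularity is then immediate, since $\mathrm{reg}$ is the maximum of $j-i$ over nonvanishing Betti numbers and the Betti table of $I(H)^{\{s\}}$ is entrywise dominated by that of $I(G)^{\{s\}}$. The main obstacle is step (iv): one must make sure the hypotheses of \cite[Lemma 2.4]{hhz} genuinely apply to $I(G)^{\{s\}}$ — in particular that after the identification in Lemma~\ref{del} the ideal $I(G\setminus x)^{\{s\}}$ really is the ``restriction'' of $I(G)^{\{s\}}$ in the precise monomial-ideal sense used by that lemma (setting $x=1$ in the generators, or equivalently the polarization-type restriction) — and to track the grading shift coming from passing to the smaller polynomial ring so that the comparison of $\beta_{i,j}$'s is stated in the correct degree.
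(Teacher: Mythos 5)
Your proof is correct and in essence coincides with the paper's: both reduce the statement, via Lemma \ref{del}, to the observation that $I(H)^{\{s\}}$ is generated by exactly those monomials of $I(G)^{\{s\}}$ not divisible by the deleted variables, and then invoke the Herzog--Hibi--Zheng restriction lemma to compare graded Betti numbers. The paper simply applies the restriction lemma to the whole set $W=V(G)\setminus V(H)$ in one step, so your vertex-by-vertex induction and the short-exact-sequence/connecting-homomorphism digression are unnecessary scaffolding (which you in any case abandon in favor of the restriction lemma itself).
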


\begin{proof}
Set $W:=V(G)\setminus V(H)$. We conclude from Lemma \ref{del} that $I(H)^{\{s\}}$ is generated by monomials in $I(G)^{\{s\}}$ which are not divisible by any variable in $W$. Then the assertions follow from restriction lemma \cite[Lemma 4.4]{hhz}.
\end{proof}

We are now ready to prove the main result of this section.

\begin{thm} \label{indmatch}
Let $G$ be a graph. Then for every integer $s$ with $1\leq s\leq \ind-match(G)$, we have$${\rm reg}(I(G)^{\{s\}})\geq s+\ind-match(G).$$
\end{thm}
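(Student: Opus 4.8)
The plan is to reduce the general inequality to a statement about a single induced subgraph and then invoke the known lower bound for ordinary squarefree powers indirectly through a simpler monomial structure. Let $r=\ind-match(G)$ and fix an induced matching $M=\{e_1,\ldots,e_r\}$ of $G$, say $e_k=u_kv_k$. Let $H$ be the induced subgraph of $G$ on the vertex set $\bigcup_{k=1}^r \{u_k,v_k\}$. Since $M$ is an induced matching, $H$ is exactly a disjoint union of $r$ edges (a perfect matching on $2r$ vertices), so $\mathfrak{ht}(I(H))=r\geq s$ and Proposition \ref{ind} applies: it suffices to prove
\begin{equation*}
{\rm reg}(I(H)^{\{s\}})\geq s+r .
\end{equation*}

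First I would identify $I(H)^{\{s\}}$ explicitly. Writing $H=e_1\sqcup\cdots\sqcup e_r$ with $e_k=(u_kv_k)$, the minimal primes of $I(H)$ are the ideals $\mathfrak{p}_C$ where $C$ picks exactly one endpoint from each $e_k$; raising to the $s$-th symbolic power and taking the squarefree part, a squarefree monomial $m$ lies in $I(H)^{\{s\}}$ iff for every choice function $C$ the exponent of the chosen variables sums to at least $s$, which (since $m$ is squarefree) translates to: the number of edges $e_k$ with \emph{both} endpoints dividing $m$ is at least $s$. Hence $I(H)^{\{s\}}$ is generated by the squarefree monomials $u_{k_1}v_{k_1}\cdots u_{k_s}v_{k_s}$ for $s$-subsets $\{k_1,\ldots,k_s\}\subseteq\{1,\ldots,r\}$ — in other words $I(H)^{\{s\}}=I(H)^{[s]}$ coincides with the $s$-fold squarefree power here, because on a graph that is a disjoint union of edges every matching of size $s$ is automatically induced. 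So for this $H$, $I(H)^{\{s\}}=I(H)^{[s]}$, and $\mathrm{reg}(I(H)^{[s]})\geq s+\ind-match(H)=s+r$ by \cite[Theorem 2.1]{ehhs}. Combined with Proposition \ref{ind} this gives $\mathrm{reg}(I(G)^{\{s\}})\geq s+r$, as desired.

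An alternative, more self-contained route — in case one does not wish to quote \cite[Theorem 2.1]{ehhs} — is to compute $\mathrm{reg}(I(H)^{\{s\}})$ directly. Since $I(H)^{\{s\}}$ is the $s$-th squarefree Veronese-type ideal built from the $r$ ``super-variables'' $f_k=u_kv_k$ (each of degree $2$), one can realize it up to a bidegree bookkeeping as a squarefree Veronese ideal in $r$ variables with all generators in degree $2s$; its minimal free resolution is the well-understood Eagon--Northcott-type / Eliahou--Kervaire resolution, whose top nonzero Betti number lies in homological degree $r-s$ and internal degree $2r$, giving regularity $2r-(r-s)=r+s$. Either way the value $s+r$ is attained on $H$.

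The main obstacle is purely the first step: being careful that $H$, the induced subgraph on the matched vertices, really is a disjoint union of $r$ edges (this uses the defining property of an \emph{induced} matching — there are no edges of $G$ among these $2r$ vertices other than the $e_k$ themselves), and then checking that $s\leq r=\mathfrak{ht}(I(H))$ so that Proposition \ref{ind} is legitimately applicable and $I(H)^{\{s\}}\neq 0$ by Proposition \ref{zero}. After that the description of $I(H)^{\{s\}}$ and the regularity computation are routine; the only mild care needed is the translation ``squarefree monomial in $I(H)^{\{s\}}$'' $\Longleftrightarrow$ ``covers at least $s$ of the edges $e_k$ entirely'', which follows immediately from the formula $I(H)^{(s)}=\bigcap_C \mathfrak{p}_C^{\,s}$ recorded in Section \ref{sec2}.
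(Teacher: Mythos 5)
Your proposal is correct and follows essentially the same route as the paper: restrict to the induced subgraph $H$ spanned by an induced matching (a disjoint union of edges), observe that $I(H)^{\{s\}}=I(H)^{[s]}$, invoke the lower bound of Erey--Herzog--Hibi--Saeedi Madani for squarefree powers, and lift back to $G$ via Proposition \ref{ind}. The only cosmetic difference is that you verify $I(H)^{\{s\}}=I(H)^{[s]}$ directly from the primary decomposition (and sketch an optional Veronese-type resolution argument), whereas the paper gets this equality by citing the bipartite case of Simis--Vasconcelos--Villarreal; both justifications are valid.
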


\begin{proof}
Set $t:=\ind-match(G)$. Without loss of generality, suppose that the set of edges $\{x_1x_2, x_3x_4, \ldots, x_{2t-1}x_{2t}\}$ is an induced matching of $G$. Let $H$ be the induced subgraph of $G$ on vertices $x_1, x_2, \ldots, x_{2t}$. We know from the proof of \cite[Theorem 2.1]{ehhs} that ${\rm reg}(I(H)^{[s]})\geq s+t$. On the other hand, $H$ is a bipartite graph and so we deduce from \cite[Theorem 5.9]{svv} that $I(H)^{\{s\}}=I(H)^{[s]}$. Thus, ${\rm reg}(I(H)^{\{s\}})\geq s+t$ and the assertion follows from Proposition \ref{ind}.
\end{proof}


\section{Chordal graphs} \label{sec3}

The goal of this section is to provide a sharp combinatorial upper bound for the regularity of $I(G)^{\{s\}}$ when $G$ is chordal graph. More precisely, we will prove in Theorem \ref{mainchord} the for any chordal graph $G$ and for every integer $s$ with $1\leq s\leq \mathfrak{ht}(I(G))$, the regularity of $I(G)^{\{s\}}$ is bounded above by $s+\ord-match(G)$. The proof of this result is by induction and the following two lemmata have important roles in our induction step.

\begin{lem} \label{chordcolon}
Let $G$ be a graph and assume that $x_1$ is a simplicial vertex of $G$, with $N_G(x_1)=\big\{x_2, \ldots x_d\big\}$, for some integer $d\geq 2$. Then for every integer $s\geq 1$,$$\big(I(G)^{\{s\}}: x_1x_2\ldots x_d\big)=I(G\setminus N_G[x_1])^{\{s-d+1\}}.$$
\end{lem}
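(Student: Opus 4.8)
The statement to prove is $\big(I(G)^{\{s\}}: x_1x_2\cdots x_d\big)=I(G\setminus N_G[x_1])^{\{s-d+1\}}$, where $x_1$ is simplicial with neighbourhood $\{x_2,\dots,x_d\}$, so that $\{x_1,x_2,\dots,x_d\}$ is a clique. The plan is to work prime-by-prime: since $I(G)^{\{s\}}$ is the squarefree part of $\bigcap_{\mathfrak p\in{\rm Ass}(S/I(G))}\mathfrak p^s$, and colon commutes with intersection, I would first record that $\big(\bigcap_\mathfrak p \mathfrak p^s : u\big)=\bigcap_\mathfrak p (\mathfrak p^s:u)$ for the monomial $u=x_1x_2\cdots x_d$, and then pass the colon through the "squarefree part" operation. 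Here a small technical point must be handled: taking the squarefree part does not in general commute with colon, so I expect to argue directly that a squarefree monomial $w$ lies in $\big(I(G)^{\{s\}}:u\big)$ if and only if $wu$ (after replacing repeated variables — note $w$ may share variables with $u$) lies in the relevant power of each associated prime, and to track when $wu/\gcd$-type reductions stay squarefree. The cleanest route is: $w\in(I(G)^{\{s\}}:u)$ iff $wu\in I(G)^{(s)}$ and $wu$ is (or can be taken) squarefree-compatible, i.e. iff for every $\mathfrak p\in{\rm Ass}(S/I(G))$ the exponent count $\deg_{\mathfrak p}(wu)\ge s$, where $\deg_{\mathfrak p}$ counts multiplicity of variables of $\mathfrak p$.

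The combinatorial heart is the structure of the minimal vertex covers of a chordal (indeed any) graph relative to a simplicial vertex. A minimal vertex cover $C$ of $G$ either (i) contains $x_1$ and then must contain $x_2,\dots,x_d$ or all-but-one of them — more precisely, to cover the edges of the clique $\{x_1,\dots,x_d\}$, $C$ must contain at least $d-1$ of these $d$ vertices; or (ii) omits $x_1$, in which case it must contain all of $x_2,\dots,x_d$. In every case $|C\cap\{x_1,\dots,x_d\}|\ge d-1$, so $u=x_1\cdots x_d$ already contributes at least $d-1$ to $\deg_C(u)$. The key dichotomy: the minimal vertex covers of $G$ are in bijection, roughly, with pairs (a choice of which vertices of $N_G[x_1]$ to exclude) together with a minimal vertex cover of $G\setminus N_G[x_1]$; concretely, if $C\in\mathcal C(G)$ then $C\setminus N_G[x_1]$ is a vertex cover of $G\setminus N_G[x_1]$, and conversely a minimal vertex cover $D$ of $G\setminus N_G[x_1]$ extends to $D\cup\{x_2,\dots,x_d\}$ (which covers all remaining edges since every edge of $G$ meeting $N_G[x_1]$ meets $\{x_2,\dots,x_d\}$, because $N_G(x_1)$ dominates — wait, not quite; I need: every edge incident to a vertex of $N_G[x_1]$ is covered by $\{x_2,\dots,x_d\}\cup D$, which holds because such an edge either lies inside the clique, or joins some $x_i$ ($2\le i\le d$) to an outside vertex and is covered by $x_i$, or joins an outside vertex to... — the only problematic edges are those from outside vertices among themselves, covered by $D$). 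So I would prove the containment in both directions by translating "$\deg_C(wu)\ge s$ for all $C\in\mathcal C(G)$" into "$\deg_D(w')\ge s-(d-1)$ for all $D\in\mathcal C(G\setminus N_G[x_1])$", where $w'$ is the part of $w$ supported outside $N_G[x_1]$; the shift by $d-1$ comes exactly from the uniform lower bound $|C\cap\{x_1,\dots,x_d\}|\ge d-1$ and the fact that this bound is attained.

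For the forward inclusion: given $w\in(I(G)^{\{s\}}:u)$ squarefree, I must show $w'\in I(G\setminus N_G[x_1])^{\{s-d+1\}}$, i.e. $\deg_D(w')\ge s-d+1$ for every minimal vertex cover $D$ of $G\setminus N_G[x_1]$. Pick such a $D$, extend it to $C:=D\cup\{x_2,\dots,x_d\}\in\mathcal C(G)$, and use $\deg_C(wu)\ge s$; since $wu$ restricted to $N_G[x_1]$ contributes exactly $d-1$ (from $x_2,\dots,x_d$ in $C$; note $x_1\notin C$ here, and any of $x_2,\dots,x_d$ appearing in $w$ would make $\deg_C$ larger, so I should choose $C$ to also possibly include $x_1$ if that gives a tighter bound — actually I want the minimal contribution, so I use the cover that contributes least, which is why the uniform bound "$\ge d-1$ with equality achievable" is the pivotal fact). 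Conversely, given $w'\in I(G\setminus N_G[x_1])^{\{s-d+1\}}$, set $w:=w'\cdot(\text{squarefree monomial on } N_G[x_1]$ chosen minimally, e.g. empty or a single variable as needed) and check $\deg_C(wu)\ge s$ for all $C\in\mathcal C(G)$ using $|C\cap N_G[x_1]|\ge d-1$ together with $\deg_{C\setminus N_G[x_1]}(w')\ge s-d+1$, being careful that $C\setminus N_G[x_1]$ contains a minimal vertex cover of the deleted graph. The main obstacle I anticipate is the bookkeeping around variables shared between $w$ and $u$ and keeping everything squarefree — that is, making the passage between "$I(G)^{\{s\}}$" and "$I(G)^{(s)}$" under the colon operation rigorous — rather than the vertex-cover combinatorics, which is standard for simplicial vertices; I would likely isolate the squarefree/colon interchange as a short preliminary observation (perhaps citing or adapting \cite[Lemma 3.2]{s10} or Lemma \ref{del}) before doing the cover count.
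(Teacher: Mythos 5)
Your plan is correct, but it follows a genuinely different route from the paper. The paper's proof is a two-line reduction: it quotes the symbolic-power colon formula for a simplicial vertex from an earlier paper (\cite[Lemma 2]{s9}, which gives $\big(I(G)^{(s)}:x_1\cdots x_d\big)=I(G\setminus N_G[x_1])^{(s-d+1)}$), checks that multiplying a squarefree generator coprime to $x_1,\ldots,x_d$ by $x_1\cdots x_d$ stays squarefree, and uses Lemma \ref{del} to discard the deleted variables; you instead re-derive the (squarefree version of the) colon formula from scratch, via the primary decomposition $I(G)^{(s)}=\bigcap_{C\in\mathcal C(G)}\mathfrak p_C^s$ and the vertex-cover combinatorics of the clique $N_G[x_1]$ (every cover meets it in at least $d-1$ vertices, with equality achieved by extending a minimal cover $D$ of $G\setminus N_G[x_1]$ to $D\cup\{x_2,\ldots,x_d\}$). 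Both directions of your count are sound; what the paper's route buys is brevity by outsourcing the combinatorics, while yours buys a self-contained and more transparent argument. Two points would streamline your write-up: (i) the squarefree/colon bookkeeping you worry about evaporates, because the colon of a squarefree monomial ideal by the squarefree monomial $u=x_1\cdots x_d$ has minimal generators that are squarefree and coprime to $u$, so $w\in(I(G)^{\{s\}}:u)$ for such $w$ iff $wu\in I(G)^{(s)}$, and in the converse direction you should simply take $w=w'$ with no extra factor supported on $N_G[x_1]$; (ii) in the forward direction the extended cover $D\cup\{x_2,\ldots,x_d\}$ need not be minimal, but this is harmless since $I(G)^{(s)}\subseteq\mathfrak p_C^s$ for every (not necessarily minimal) vertex cover $C$, a remark worth making explicit.
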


\begin{proof}
Let $u$ be a minimal monomial generator of $I(G\setminus N_G[x_1])^{\{s-d+1\}}$. We know from \cite[Lemma 2]{s9} that $ux_1x_2\ldots x_d\in I(G)^{(s)}$. Since $u$ is a squarefree monomial which is not divisible by $x_1, \ldots, x_d$, we deduce that $ux_1x_2\ldots x_d$ is a squarefree monomial. Hence, $ux_1x_2\ldots x_d\in I(G)^{\{s\}}$ which yields that$$I(G\setminus N_G[x_1])^{\{s-d+1\}}\subseteq \big(I(G)^{\{s\}}: x_1x_2\ldots x_d\big).$$

To prove the reverse inclusion, let $v$ be a minimal monomial generator of the ideal $\big(I(G)^{\{s\}}: x_1x_2\ldots x_d\big)$. As $vx_1x_2\ldots x_d\in I(G)^{(s)}$, we conclude from \cite[Lemma 2]{s9} that $v\in I(G)^{\{s-d+1\}}$. Since $v$ is not divisible by $x_1, \ldots, x_d$, it follows from Lemma \ref{del} that$$v\in I(G\setminus \{x_1, \ldots, x_d\})^{\{s-d+1\}}=I(G\setminus N_G[x_1])^{\{s-d+1\}}.$$
\end{proof}

In the sequel, for any subset $B$ of $\{x_1, \ldots, x_n\}$, we denote the monomial $\prod_{x_i\in B}x_i$ by $\mathrm{x}_B$.

\begin{lem} \label{sub}
Let $G$ be a graph and suppose that $W=\{x_1, \ldots ,x_d\}$ is a nonempty subset of vertices of $G$. Then for every integer $s\geq 1$,
\begin{align*}
& {\rm reg}(I(G)^{\{s\}}:x_1))\leq\\ & \max\bigg\{{\rm reg}\big(I(G\setminus A)^{\{s\}}: \mathrm{x}_B\big)+|B|-1\mid x_1\in B, A\cap B=\emptyset, A\cup B=W\bigg\}.
\end{align*}
\end{lem}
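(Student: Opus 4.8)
The plan is to prove the inequality by induction on $|W| = d$, peeling off one variable of $W$ at a time using the standard short exact sequence that relates a colon ideal to the colon by one more variable. The base case $d = 1$ is trivial: here $W = \{x_1\}$, the only choice is $B = \{x_1\}$, $A = \emptyset$, and the right-hand side is just ${\rm reg}(I(G)^{\{s\}} : x_1)$, so both sides agree.

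For the inductive step, pick a variable in $W \setminus \{x_1\}$, say $x_d$, and set $W' := W \setminus \{x_d\}$. Write $K := I(G)^{\{s\}} : x_1$ (or more precisely, at the generic stage of the induction we will be looking at a colon of the form $I(G \setminus A')^{\{s\}} : \mathrm{x}_{B'}$ for the subgraph and subset produced so far). There is a short exact sequence
\begin{equation*}
0 \longrightarrow \big(S/(K : x_d)\big)(-1) \stackrel{\cdot x_d}{\longrightarrow} S/K \longrightarrow S/(K + (x_d)) \longrightarrow 0,
\end{equation*}
from which the standard regularity inequality gives
\begin{equation*}
{\rm reg}(S/K) \leq \max\big\{\, {\rm reg}\big(S/(K : x_d)\big) + 1,\ {\rm reg}\big(S/(K + (x_d))\big)\,\big\}.
\end{equation*}
Translating back to ideals, ${\rm reg}(K) \leq \max\{{\rm reg}(K : x_d) + 1,\ {\rm reg}(K + (x_d))\}$. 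The first term on the right, $K : x_d = (I(G)^{\{s\}} : x_1) : x_d = I(G)^{\{s\}} : x_1 x_d$, is a colon by one more variable, so it falls under the induction hypothesis applied to the subset $W'' = W$ but now with two variables already removed — this contributes the summand $|B| - 1$ with $|B|$ enlarged by one, which is exactly the bookkeeping the statement predicts. For the second term, I would use Lemma \ref{del}: modulo $(x_d)$, the ideal $I(G)^{\{s\}}$ agrees with $I(G \setminus x_d)^{\{s\}}$, so ${\rm reg}(K + (x_d)) = {\rm reg}\big((I(G \setminus x_d)^{\{s\}} : x_1) + (x_d)\big) \leq {\rm reg}(I(G \setminus x_d)^{\{s\}} : x_1)$, and this is a colon over the smaller graph $G \setminus x_d$ with the variable $x_d$ moved into the "$A$" part. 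Applying the induction hypothesis (on $|W'|$) to the graph $G \setminus x_d$ and the subset $W' = W \setminus \{x_d\}$ produces exactly the contributions on the right-hand side of the claimed inequality in which $x_d \in A$.

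Assembling the two cases, every partition $W = A \sqcup B$ with $x_1 \in B$ arises: either $x_d \in B$ (handled through the colon term, where we recurse keeping $G$ but enlarging $B$) or $x_d \in A$ (handled through the quotient term, where we recurse on $G \setminus x_d$ with $x_d$ placed in $A$), and the exponents $|B| - 1$ track correctly because passing through the colon SES adds exactly $1$ each time a vertex enters $B$. The main technical point to be careful about is the interplay between Lemma \ref{del} and the colon operation — one must check that $(I(G)^{\{s\}} : x_1) + (x_d) = (I(G \setminus x_d)^{\{s\}} : x_1) + (x_d)$, which follows since colon by $x_1$ and addition of $(x_d)$ both commute appropriately with the reduction modulo $x_d$ in Lemma \ref{del} (here it is relevant that $x_1 \ne x_d$). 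There is also the mild subtlety that some of the intermediate ideals $I(G \setminus A)^{\{s\}}$ may drop below the height threshold and become zero; in that degenerate case the corresponding colon is the unit ideal (regularity $0$ or $-\infty$ in the usual conventions) and contributes nothing, so the inequality is unaffected. I expect this last bit of case-checking, rather than any deep idea, to be the only real obstacle.
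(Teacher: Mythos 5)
Your argument is correct and rests on the same two ingredients as the paper's proof: the colon/quotient regularity inequality ${\rm reg}(I)\leq\max\{{\rm reg}(I:x)+1,\ {\rm reg}(I,x)\}$ (this is \cite[Lemma 2.10]{dhs} in the paper, equivalent to your short exact sequence), and Lemma \ref{del} to replace $(I(G)^{\{s\}},x_d)$ by $(I(G\setminus x_d)^{\{s\}},x_d)$. The one structural difference is the order of operations: you split on $x_d$ first and then invoke induction on both branches, so the colon branch $I(G)^{\{s\}}:x_1x_d$ is not an instance of the lemma as stated, and your induction must really be run on the strengthened statement about colons of the form $I(G\setminus A')^{\{s\}}:\mathrm{x}_{B'}$ that you allude to at the ``generic stage''; this should be formulated explicitly (say, inducting on $|W\setminus B'|$), although doing so is routine. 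The paper sidesteps the strengthening by applying the inductive hypothesis for $W'=W\setminus\{x_d\}$ directly to ${\rm reg}(I(G)^{\{s\}}:x_1)$ and only afterwards splitting each resulting term ${\rm reg}(I(G\setminus A)^{\{s\}}:\mathrm{x}_B)$ on $x_d$, so the induction statement never changes. Two minor points: the identity you flag, $(I(G)^{\{s\}}:x_1)+(x_d)=\big(I(G)^{\{s\}}+(x_d)\big):x_1$, does hold because colon by a monomial distributes over sums of monomial ideals and $x_1\neq x_d$, after which Lemma \ref{del} applies; and in the degenerate case $I(G\setminus A)^{\{s\}}=0$ the colon is the zero ideal (not the unit ideal, which instead occurs when $\mathrm{x}_B$ lies in the ideal) --- in either degenerate case the term is harmless, as you say.
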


\begin{proof}
We use induction on $d$. There is nothing to prove for $d=1$. Therefore, suppose that $d\geq 2$. Set $W' =\{x_1, \ldots, x_{d-1}\}$. We know from the induction hypothesis that
\begin{align*} \tag{1} \label{3}
& {\rm reg}(I(G)^{\{s\}}:x_1))\leq\\ & \max\bigg\{{\rm reg}\big(I(G\setminus A)^{\{s\}}: \mathrm{x}_B\big)+|B|-1\mid x_1\in B, A\cap B=\emptyset, A\cup B=W'\bigg\}.
\end{align*}
For every pair of subsets $A, B\subseteq V(G)$ with $x_1\in B, A\cap B=\emptyset$ and $A\cup B=W'$, it follows from \cite[Lemma 2.10]{dhs} that
\begin{align*}
& {\rm reg}\big(I(G\setminus A)^{\{s\}}: \mathrm{x}_B\big)+|B|-1\\ & \leq \max\Big\{{\rm reg}\big((I(G\setminus A)^{\{s\}}: \mathrm{x}_B), x_d\big)+|B|-1, {\rm reg}\big(I(G\setminus A)^{\{s\}}: x_d\mathrm{x}_B\big)+|B|\Big\}\\ & =\max\Big\{{\rm reg}\big((I(G\setminus A)^{\{s\}},x_d): \mathrm{x}_B\big)+|B|-1, {\rm reg}\big(I(G\setminus A)^{\{s\}}: x_d\mathrm{x}_B\big)+|B|\Big\}\\ & =\max\Big\{{\rm reg}\big(I(G\setminus (A\cup\{x_d\}))^{\{s\}}: \mathrm{x}_B\big)+|B|-1, {\rm reg}\big(I(G\setminus A)^{\{s\}}: \mathrm{x}_{B\cup\{x_d\}}\big)+|B|\Big\},
\end{align*}
where the last equality is a consequence of Lemma \ref{del}. The claim now follows by combining the above inequality and inequality (\ref{3}).
\end{proof}

We are now ready to prove the main result of this section.

\begin{thm} \label{mainchord}
Let $G$ be a chordal graph. Then for every integer $s$ with $1\leq s\leq \mathfrak{ht}(I(G))$, we have$${\rm reg}(I(G)^{\{s\}})\leq s+\ord-match(G).$$
\end{thm}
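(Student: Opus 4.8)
The plan is to proceed by induction on the number of vertices of $G$ (or equivalently on the number of edges), using a simplicial vertex as the pivot. Since $G$ is chordal, it has a simplicial vertex $x_1$; write $N_G(x_1)=\{x_2,\ldots,x_d\}$ so that $W:=N_G[x_1]=\{x_1,\ldots,x_d\}$ is a clique. Both $G\setminus x_1$ and any $G\setminus A$ for $A\subseteq W$ are again chordal, and any induced subgraph of $G$ has ordered matching number at most $\ord\text{-}match(G)$, so the inductive hypothesis is available for all the smaller graphs that arise below. The base cases (e.g. $G$ a single edge, or $s=\mathfrak{ht}(I(G))$ where Proposition~\ref{zero} gives a principal ideal of regularity $\mathfrak{ht}(I(G))\le 1+\ord\text{-}match(G)$, or $I(G)^{\{s\}}$ a minimal generator situation) are routine.

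The engine is the short exact sequence
$$0\longrightarrow S/\big(I(G)^{\{s\}}:x_1\big)\xrightarrow{\ \cdot x_1\ } S/I(G)^{\{s\}}\longrightarrow S/\big(I(G)^{\{s\}},x_1\big)\longrightarrow 0,$$
which gives
$${\rm reg}\big(I(G)^{\{s\}}\big)\le\max\Big\{{\rm reg}\big(I(G)^{\{s\}}:x_1\big)+1,\ {\rm reg}\big(I(G)^{\{s\}},x_1\big)\Big\}.$$
For the second term, Lemma~\ref{del} gives $(I(G)^{\{s\}},x_1)=(I(G\setminus x_1)^{\{s\}},x_1)$, and since $x_1$ is a new variable for $I(G\setminus x_1)^{\{s\}}$ we get ${\rm reg}(I(G)^{\{s\}},x_1)={\rm reg}(I(G\setminus x_1)^{\{s\}})$, which by induction is at most $s+\ord\text{-}match(G\setminus x_1)\le s+\ord\text{-}match(G)$ — provided $I(G\setminus x_1)^{\{s\}}\ne 0$; if it is zero the term $(I(G)^{\{s\}},x_1)=(x_1)$ has regularity $1$ and is harmless. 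For the first term, I would apply Lemma~\ref{sub} with $W=N_G[x_1]$ to bound ${\rm reg}(I(G)^{\{s\}}:x_1)$ by the maximum over partitions $W=A\sqcup B$ with $x_1\in B$ of ${\rm reg}(I(G\setminus A)^{\{s\}}:\mathrm{x}_B)+|B|-1$. The key point is that in $G\setminus A$ the vertex $x_1$ is still simplicial with neighborhood $B\setminus\{x_1\}$ (a clique), so Lemma~\ref{chordcolon} applies: if $B=\{x_1\}\cup B'$ then $(I(G\setminus A)^{\{s\}}:\mathrm{x}_B)=I\big((G\setminus A)\setminus N[x_1]\big)^{\{s-|B|+1\}}=I\big(G\setminus W\big)^{\{s-|B|+1\}}$. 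Hence each term is $\big(s-|B|+1\big)+\ord\text{-}match(G\setminus W)+|B|-1=s+\ord\text{-}match(G\setminus W)$ by induction (when the relevant symbolic squarefree power is nonzero, i.e. $s-|B|+1\le\mathfrak{ht}(I(G\setminus W))$; otherwise that term vanishes or is principal and contributes at most $s$, again harmless). Combining, ${\rm reg}(I(G)^{\{s\}}:x_1)+1\le s+1+\ord\text{-}match(G\setminus W)$.

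It remains to verify the numerical inequality $1+\ord\text{-}match(G\setminus W)\le\ord\text{-}match(G)$ — this is the combinatorial heart of the argument. Given a largest ordered matching $M'=\{a_ib_i\mid 1\le i\le r\}$ of $G\setminus W=G\setminus N_G[x_1]$, I want to prepend a single edge at $x_1$ to obtain an ordered matching of $G$ of size $r+1$. The natural candidate is to add the edge $x_1x_2$ (some neighbor of $x_1$) as a new ``first'' pair $a_0b_0:=x_1x_2$. One must check the two axioms of Definition~\ref{om}: first, $\{x_1,a_1,\ldots,a_r\}$ must be independent — this holds because $x_1$ has all its neighbors inside $W$, and $a_1,\ldots,a_r\in V(G\setminus W)$, so $x_1$ is non-adjacent to every $a_i$; second, for the ordering condition we need $x_1 b_j\notin E(G)$ for all $j\ge 1$ (automatic, since $b_j\notin W=N_G[x_1]$) and $a_i x_2\notin E(G)$ is \emph{not} required — the condition ``$a_ib_j\in E(G)\Rightarrow i\le j$'' with the new index $0$ only constrains pairs $(0,j)$, i.e. $a_0=x_1$ against $b_j$, which we just handled, and there is no constraint forcing $a_i$ ($i\ge 1$) to avoid $b_0=x_2$. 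So the construction works verbatim. The main obstacle is precisely this last bookkeeping: making sure Lemma~\ref{sub}/Lemma~\ref{chordcolon} can be iterated cleanly over all partitions $A\sqcup B$ of $N_G[x_1]$ (including degenerate cases $A=\emptyset$ or $B=\{x_1\}$, and the cases where a colon ideal becomes $0$ or principal because the shifted index $s-|B|+1$ falls out of the allowed range), together with confirming that $G\setminus A$ and $G\setminus W$ remain chordal with controlled ordered matching number so that the inductive hypothesis genuinely applies.
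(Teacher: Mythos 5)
Your overall strategy is exactly the paper's: induction on $|V(G)|$ pivoting at a simplicial vertex $x_1$, the short exact sequence bounding ${\rm reg}(I(G)^{\{s\}})$ by $\max\{{\rm reg}(I(G)^{\{s\}}:x_1)+1,\ {\rm reg}(I(G)^{\{s\}},x_1)\}$, Lemma \ref{del} plus induction for the sum term, and Lemma \ref{sub} with $W=N_G[x_1]$ followed by Lemma \ref{chordcolon} to identify each colon term with $I(G\setminus N_G[x_1])^{\{s-|B|+1\}}$, whose regularity is controlled by induction. Up to that point your argument and the paper's proof coincide (your treatment of the degenerate cases where the shifted exponent leaves the admissible range is, if anything, more explicit than the paper's).

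The one step where you depart from the paper is the inequality $\operatorname{ord-match}(G\setminus N_G[x_1])+1\leq\operatorname{ord-match}(G)$, which the paper simply quotes from \cite[Lemma 2.1]{s4} and which you try to prove directly --- and your verification is flawed as written. You prepend $x_1x_2$ as a new first pair $a_0b_0$ with $a_0=x_1$, $b_0=x_2$, and assert that condition (2) of Definition \ref{om} ``only constrains pairs $(0,j)$.'' It does not: it also constrains the pairs $(i,0)$ with $i\geq 1$, where it reads $a_ib_0=a_ix_2\in E(G)\Rightarrow i\leq 0$; so your construction tacitly needs $x_2$ to be non-adjacent to every $a_i$, which can fail. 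Concretely, let $G$ be the triangle on $\{x_2,a,b\}$ with the pendant edge $x_1x_2$: then $G\setminus N_G[x_1]$ is the single edge $ab$, every ordered matching of it has its $a$-vertex adjacent to $x_2$ in $G$, and prepending $x_1x_2$ violates condition (2). The inequality itself is true and the repair is immediate: append the new edge as the \emph{last} pair, $a_{r+1}:=x_1$, $b_{r+1}:=x_2$. Then $a_ib_{r+1}\in E(G)\Rightarrow i\leq r+1$ is vacuous, $a_{r+1}b_j=x_1b_j\notin E(G)$ because $b_j\notin N_G[x_1]$, and $\{a_1,\ldots,a_r,x_1\}$ is independent because all neighbors of $x_1$ lie in $N_G[x_1]$, which is disjoint from the vertices of the old matching. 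With this one-line correction (or simply by citing \cite[Lemma 2.1]{s4}, as the paper does), your proof is complete and is essentially identical to the paper's.
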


\begin{proof}
Suppose $V(G)=\{x_1, \ldots, x_n\}$. We use induction on $n$. If $n=2$, then $I(G)=(x_1x_2)$. Hence, $\mathfrak{ht}(I(G))=1$ and so, $s=1$. In this case the assertion is obvious. Therefore, suppose $n\geq 3$.

Assume without loss of generality that $x_1$ is a simplicial vertex of $G$ and $N_G(x_1)=\big\{x_2, \ldots, x_d\big\}$, for some integer $d\geq 2$. It follows from \cite[Lemma 2.10]{dhs} that
\[
\begin{array}{rl}
{\rm reg}(I(G)^{\{s\}})\leq \max \big\{{\rm reg}(I(G)^{\{s\}}:x_1)+1, {\rm reg}(I(G)^{\{s\}},x_1)\big\}.
\end{array} \tag{2} \label{1}
\]

Using Lemma \ref{del}, we have$$\big(I(G)^{\{s\}},x_1\big)=\big(I(G\setminus x_1)^{\{s\}},x_1\big).$$Therefore, $${\rm reg}\big(I(G)^{\{s\}},x_1\big)={\rm reg}\big(I(G\setminus x_1)^{\{s\}}\big).$$As $G\setminus x_1$ is a chordal graph, it follows from the induction hypothesis and the above equality that
\[
\begin{array}{rl}
{\rm reg}\big(I(G)^{\{s\}},x_1)\leq s+\ord-match(G\setminus x_1)\leq s+\ord-match(G).
\end{array} \tag{3} \label{2}
\]

Thus, using the inequality (\ref{1}), it is enough to prove that$${\rm reg}(I(G)^{\{s\}}:x_1)\leq s+\ord-match(G)-1.$$

Substituting $W= N_G[x_1]$ in Lemma \ref{sub}, we deduce that
\begin{align*} \tag{4} \label{4}
& {\rm reg}(I(G)^{\{s\}}:x_1))\leq\\ & \max\bigg\{{\rm reg}\big(I(G\setminus A)^{\{s\}}: \mathrm{x}_B\big)+|B|-1\mid x_1\in B, A\cap B=\emptyset, A\cup B=N_G[x_1]\bigg\}.
\end{align*}

Let $A$ and $B$ be subsets of $V(G)$ with $x_1\in B$, $A\cap B=\emptyset$ and $A\cup B=N_G[x_1]$. Obviously, $G\setminus A$ is a chordal graph and $x_1$ is a simplicial vertex of this graph. Moreover, $N_{G\setminus A}[x_1]=B$. Hence, we conclude from Lemma \ref{chordcolon} that$$(I(G\setminus A)^{\{s\}}: \mathrm{x}_B)=I\big(G\setminus (A\cup B)\big)^{\{s-|B|+1\}}=I(G\setminus N_G[x_1])^{\{s-|B|+1\}}.$$Therefore, using the induction hypothesis we have
\begin{align*} \tag{5} \label{5}
& {\rm reg}\big(I(G\setminus A)^{\{s\}}: \mathrm{x}_B\big)+|B|-1\leq s-|B|+1+\ord-match(G\setminus N_G[x_1])+|B|-1\\ & \leq s+\ord-match(G)-1,
\end{align*}
where the second inequality follows from \cite[Lemma 2.1]{s4}. 

The assertion now follows by combining inequalities (\ref{1}), (\ref{2}), (\ref{4}) and (\ref{5}).
\end{proof}

\begin{rem} \label{sharpchord}
Let $G$ be a chordal graph. The bound obtained in Theorem \ref{mainchord} for the regularity of $I(G)^{\{s\}}$ is sharp. Indeed, It is clear that for any Cameron-Walker graph $G$, we have $\ind-match(G)=\ord-match(G)$. Thus, it follows from Theorem \ref{maincw} below that ${\rm reg}(I(G)^{\{s\}})=s+\ord-match(G)$  if $G$ is a Cameron-Walker graph. Moreover, there are chordal graphs $G$ for which $\ind-match(G)\neq\ord-match(G)$ and $s+\ord-match(G)$ is a sharp upper bound for ${\rm reg}(I(G)^{\{s\}})$. For instance, let $G=P_4$ be the path graph with $4$ vertices. Then $\mathfrak{ht}(I(G))=2$ and $\ord-match(G)=2$. Furthermore, one can check that ${\rm reg}(I(G)^{\{2\}})=4$. Thus, for this graph, the inequality obtained in Theorem \ref{mainchord} becomes equality for $s=2$.
\end{rem}

\begin{rem} \label{indchord}
We know from \cite{s10} that the regularity of $I(G)^{(s)}$ only depends on the induced matching number of $G$. The same is not true for the regularity of $I(G)^{\{s\}}$. For instance, let $G_1=P_4$ be the path graph with $4$ vertices and $G_2=K_3$ be the triangle graph. Then both graphs have induced matching number one. But ${\rm reg}(I(G_1)^{\{2\}})=4$ and ${\rm reg}(I(G_2)^{\{2\}})=3$.
\end{rem}


\section{Cameron-Walker graphs} \label{sec4}

Let $G$ be a Cameron-Walker graph. In \cite[Theorem 4.3]{s11} we proved that if $I(G)^{[s]}\neq 0$, then the regularity of $I(G)^{[s]}$ is equal to $s+\ind-match(G)$. In this section, we prove that the same statement is true if one replaces $I(G)^{[s]}$ by $I(G)^{\{s\}}$ (see Theorem \ref{maincw}). We know from Theorem \ref{indmatch} that for every integer $s$ with $1\leq s\leq \ind-match(G)$, the regularity of $I(G)^{\{s\}}$ is bounded below by $s+\ind-match(G)$. To prove the same lower bound for ${\rm reg}(I(G)^{\{s\}})$ when $s> \ind-match(G)$, we compute the largest degree of minimal monomial generators of $I(G)^{\{s\}}$. This will be done in Corollary \ref{gendis}. However, we need some lemmas to prove that corollary.

\begin{lem} \label{genindmatch}
Let $G$ be a graph. Then the ideal $I(G)^{\{\ind-match(G)\}}$ has a minimal monomial generator of degree $2\ind-match(G)$.
\end{lem}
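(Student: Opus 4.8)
The plan is to exhibit an explicit squarefree monomial of degree $2\ind\text{-}match(G)$ lying in $I(G)^{(t)}$ (where $t=\ind\text{-}match(G)$) and then argue it is a \emph{minimal} generator of $I(G)^{\{t\}}$. First I would fix an induced matching $\{a_1b_1,\ldots,a_tb_t\}$ of $G$ of maximum size and set $u:=\prod_{i=1}^{t}a_ib_i$, a squarefree monomial of degree $2t$. The first step is to check $u\in I(G)^{(t)}=\bigcap_{\mathfrak p\in{\rm Min}(I(G))}\mathfrak p^{\,t}$. For a minimal prime $\mathfrak p=\mathfrak p_C$ corresponding to a minimal vertex cover $C$, each edge $a_ib_i$ contributes at least one of its two vertices to $C$; since the $2t$ vertices $a_1,b_1,\ldots,a_t,b_t$ are pairwise distinct (the matching is induced, hence a matching), $u$ is divisible by a product of $t$ distinct variables of $C$, so $u\in\mathfrak p_C^{\,t}$. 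Hence $u\in I(G)^{\{t\}}$.

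The second, more delicate step is minimality: I must show no proper divisor of $u$ lies in $I(G)^{(t)}$. Equivalently, for every variable $x$ dividing $u$, the monomial $u/x$ escapes some $\mathfrak p_C^{\,t}$. Say $x=a_1$ (the argument for $x=b_1$ is symmetric). I want a minimal vertex cover $C$ of $G$ containing $a_1$ but none of $b_1$ and such that among $a_2,b_2,\ldots,a_t,b_t$ it contains exactly $t-1$ variables — then $u/a_1$, which is not divisible by $a_1$, is divisible by at most $t-1$ variables of $C$ and so $u/a_1\notin\mathfrak p_C^{\,t}$. To build such a $C$: start from the independent-type structure of an induced matching. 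Consider the set $D$ consisting of $a_1$ together with exactly one endpoint from each of the remaining edges $a_ib_i$, $i\geq 2$; this is a set of $t$ vertices covering all matching edges, and crucially, because the matching is \emph{induced}, no edge of $G$ joins two of the "omitted" endpoints to each other in a way that forces extra vertices. More carefully, let $Y=\{b_1\}\cup\{\text{the endpoint of }a_ib_i\ (i\geq2)\text{ not chosen in }D\}$; the induced-matching hypothesis guarantees $Y$ is an independent set in the induced subgraph on $\{a_i,b_i\}$, and I extend $D$ to a vertex cover $C_0:=V(G)\setminus Y'$ where $Y'$ is a maximal independent set of $G$ containing (a suitable subset of) $Y$ but not $a_1$; taking a minimal vertex cover $C\subseteq C_0$ refines this. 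The point to nail down is that $C$ can be chosen to contain $a_1$ and to meet $\{a_2,b_2,\ldots,a_t,b_t\}$ in exactly $t-1$ vertices while avoiding $b_1$ — using that each $a_ib_i$ is an edge (forcing $\geq t-1$ vertices there) and that minimality prevents $C$ from containing both endpoints of any $a_ib_i$ once we have arranged a large enough independent complement.

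I expect the minimality step to be the main obstacle: one has to control a \emph{minimal} vertex cover simultaneously from above (it should contain $a_1$) and from below (it should not contain $b_1$ nor any superfluous endpoint of the other matching edges), and this requires genuinely exploiting that the matching is induced rather than merely a matching — an arbitrary matching would allow extra edges inside $\bigcup e_i$ that could force the cover to pick up both endpoints of some $a_ib_i$. A clean way to organize this is to pass to the induced subgraph $H$ on $\{a_i,b_i\}_{i=1}^t$: there $\{a_ib_i\}$ is a \emph{perfect} induced matching, so $H$ is a disjoint union of edges, every minimal vertex cover of $H$ picks exactly one endpoint per edge, and one chooses the cover of $H$ picking $a_1$ and $b_i$ for $i\geq 2$ (or any choice omitting $b_1$); then extend to a minimal vertex cover $C$ of $G$ via the standard fact that any vertex cover of an induced subgraph extends to one of the whole graph, shrinking to a minimal one while keeping $a_1\in C$, $b_1\notin C$, and $|C\cap\{a_2,b_2,\ldots,a_t,b_t\}|=t-1$ intact (shrinking only removes vertices outside $\{a_i,b_i\}$ or, at worst, trades within an edge, which I arrange not to affect the count). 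With such a $C$ in hand, $u/a_1$ is divisible by exactly $t-1$ of the variables of $C$ and hence lies outside $\mathfrak p_C^{\,t}$, so $u/a_1\notin I(G)^{(t)}$, and likewise for $u/b_1$ and all other variables by relabeling. Therefore $u$ is a minimal monomial generator of $I(G)^{\{t\}}$ of degree $2t=2\ind\text{-}match(G)$, as claimed.
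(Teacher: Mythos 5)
Your proposal is correct, but it takes a genuinely different route from the paper. You argue entirely inside $G$ via the decomposition $I(G)^{(t)}=\bigcap_{C\in\mathcal{C}(G)}\mathfrak{p}_C^{\,t}$ (with $t$ the induced matching number): membership of $u=\prod_i a_ib_i$ is immediate because every cover meets each matching edge, and minimality is witnessed, for each variable dividing $u$, by a minimal vertex cover meeting the support of the corresponding proper divisor in at most $t-1$ vertices. The cleanest version of your construction is the one you end with: the transversal $\{b_1,a_2,\ldots,a_t\}$ is independent precisely because the matching is induced, so extend it to a maximal independent set $Y'$ and take $C=V(G)\setminus Y'$, which is automatically a \emph{minimal} vertex cover with $Y'\cap\{a_i,b_i\}$ equal to exactly one endpoint per edge; note that once phrased this way no shrinking step is needed, and in any case shrinking a vertex cover only deletes vertices, so your worry that it might ``trade within an edge'' is moot --- an upper bound of $t-1$ on the number of variables of $C$ dividing $u/a_1$ is all that is required, since $u/a_1$ is squarefree and hence lies in $I(G)^{\{t\}}$ if and only if it lies in $I(G)^{(t)}$. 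The paper instead restricts to the induced subgraph $H$ on the $2t$ matched vertices, which is a disjoint union of edges and in particular bipartite; by the Simis--Vasconcelos--Villarreal theorem $I(H)^{\{t\}}=I(H)^{[t]}=(u)$, and then Lemma \ref{del} (via the restriction argument of Proposition \ref{ind}) transfers minimality of $u$ from $H$ to $G$. Your approach is more elementary and self-contained, avoiding both the bipartite symbolic-power theorem and the restriction lemma at the cost of an explicit vertex-cover construction, while the paper's proof is shorter because it reuses machinery already set up for Theorem \ref{indmatch}.
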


\begin{proof}
We follow the notations of the proof of Theorem \ref{indmatch}. As $I(H)^{\{t\}}=I(H)^{[t]}$, we conclude that $u:=x_1x_2 \ldots x_{2t}$ is a minimal generator of $I(H)^{\{t\}}$. Therefore, Lemma \ref{del} implies that $u$ is a minimal generator of $I(G)^{\{t\}}$ too.
\end{proof}

In the next lemma, we investigate the degree of minimal monomial generators of $I(G)^{\{s\}}$ when $G$ is a star triangle graphs.

\begin{lem} \label{gentri}
Let $G$ be a star triangle graph with $t$ triangles. Then the ideal $I(G)^{\{t+1\}}$ has a minimal generator of degree $2t+1$.
\end{lem}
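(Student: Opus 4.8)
\textbf{Proof proposal for Lemma \ref{gentri}.}

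The plan is to exhibit an explicit squarefree monomial of degree $2t+1$, verify it lies in $I(G)^{(t+1)}$, and then check its minimality. Label the triangles of $G$ as $T_i$ with vertices $\{y, a_i, b_i\}$ for $1 \leq i \leq t$, where $y$ is the common vertex. I would consider the monomial
\[
u := y \cdot \prod_{i=1}^{t} a_i b_i,
\]
which is squarefree of degree $2t+1$. To show $u \in I(G)^{(t+1)} = \bigcap_{\mathfrak{p} \in {\rm Min}(I(G))} \mathfrak{p}^{t+1}$, I would describe the minimal vertex covers of $G$ explicitly: since $y$ covers every edge incident to it, a minimal vertex cover either contains $y$ (and then must contain exactly one of $\{a_i, b_i\}$ for each $i$ to cover the edge $a_ib_i$, giving size $t+1$), or it omits $y$ (and then must contain at least two vertices of each $T_i$, hence all of $\{a_i,b_i\}$, giving size $2t$, which by $t \geq 1$ is $\geq t+1$ unless $t=1$; for $t=1$ this cover has size $2 = t+1$). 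For a cover $C$ of the first type, $u$ is divisible by $y$ times one variable from each pair, i.e.\ by at least $t+1$ variables in $C$, so $u \in \mathfrak{p}_C^{t+1}$. For a cover of the second type, $u$ is divisible by $a_ib_i$ for all $i$, hence by $2t \geq t+1$ variables of $C$. Thus $u \in \mathfrak{p}_C^{t+1}$ for every minimal vertex cover $C$, so $u \in I(G)^{(t+1)}$, and since $u$ is squarefree, $u \in I(G)^{\{t+1\}}$.

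For minimality, I must show no proper squarefree divisor of $u$ lies in $I(G)^{(t+1)}$. Equivalently, for each variable $z \mid u$, the monomial $u/z$ fails to lie in $\mathfrak{p}_C^{t+1}$ for some minimal vertex cover $C$. If $z = a_j$ (the case $z = b_j$ is symmetric), take $C = \{y\} \cup \{b_j\} \cup \{a_i : i \neq j\}$, a minimal vertex cover of size $t+1$; then $u/a_j$ is divisible by exactly the $t$ variables $y, b_j$ wait — one must recount: $u/a_j = y \cdot b_j \cdot \prod_{i\neq j} a_i b_i$, and the variables of $C$ dividing it are $y$, $b_j$, and $a_i$ for $i \neq j$, which is $t+1$ variables, so this $C$ does not work; instead take $C' = \{y\} \cup \{a_j\} \cup \{a_i : i \neq j\} = \{y, a_1, \dots, a_t\}$, so that $u/a_j$ is divisible only by $y$ and the $a_i$ with $i \neq j$, totalling $t$ variables, whence $u/a_j \notin \mathfrak{p}_{C'}^{t+1}$. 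If $z = y$, take $C'' = \{y, a_1, \dots, a_t\}$ again; then $u/y = \prod_i a_i b_i$ is divisible by exactly $t$ of its variables, so $u/y \notin \mathfrak{p}_{C''}^{t+1}$. Hence every proper squarefree divisor of $u$ lies outside $I(G)^{(t+1)}$, so $u$ is a minimal monomial generator of $I(G)^{\{t+1\}}$.

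The main obstacle is the bookkeeping in the minimality argument: one has to pick, for each variable being removed, a minimal vertex cover that is "as concentrated as possible" away from that variable, and carefully count the multiplicity of the reduced monomial against that prime power. The clean choice throughout is the cover $\{y, a_1, \dots, a_t\}$ (or a variant swapping one $a_i$ for $b_i$), which has the minimum possible size $t+1$ and is therefore the most restrictive. I would present the argument for a representative variable and note the remaining cases follow by the evident symmetry $a_i \leftrightarrow b_i$. A small separate remark may be needed to handle the degenerate case $t = 1$ uniformly, though the cover $\{y, a_1\}$ still works there.
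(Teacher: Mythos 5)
Your proof is correct and follows essentially the same route as the paper: you pick the same monomial (the product of all $2t+1$ variables) and prove minimality by testing each $u/z$ against the size-$(t+1)$ cover $\{y,a_1,\dots,a_t\}$ and its symmetric counterpart, exactly as the paper does with the cover $\{x,y_1,\dots,y_t\}$. The only cosmetic difference is that you verify $u\in I(G)^{(t+1)}$ by checking all minimal vertex covers directly, whereas the paper shortcuts this step by observing that $\mathfrak{ht}(I(G))=t+1$ makes $I(G)^{\{t+1\}}$ a nonzero squarefree monomial ideal, which must then contain the product of all variables.
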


\begin{proof}
One can easily check that $\mathfrak{ht}(I(G))=t+1$. In particular, $I(G)^{\{t+1\}}\neq 0$. Suppose $$V(G)=\{x, y_1, y_2, \ldots, y_t, z_1, z_2, \ldots, z_t\} \ \ \ \ {\rm and} \ \ \ \ E(G)=\bigcup_{i=1}^t\{xy_i, xz_i, y_iz_i\}.$$Hence, $S = \mathbb{K}[x, y_1,\ldots,y_t, z_1, \ldots, z_t]$. Set $u:=xy_1\ldots y_tz_1\ldots z_t$. In other words, $u$ is the product of all variables in $S$. Since $I(G)^{[t+1]}$ is a nonzero squarefree monomial ideal, it contains $u$. We show that $u$ is a minimal generator of $I(G)^{[t+1]}$. As $C:=\{x, y_1, y_2, \ldots, y_t\}$ is a minimal vertex cover of $G$, we conclude that the ideal $\mathfrak{p}_C$ is a minimal prime of $I(G)$. It is obvious that $u/x\notin \mathfrak{p}_C^{t+1}$. Moreover for each integer $i=1, 2, \ldots, t$, we have $u/y_i\notin \mathfrak{p}_C^{t+1}$. Consequently, $u/x, u/y_i\notin I(G)^{\{t+1\}}$. By symmetry, $u/z_i\notin I(G)^{\{t+1\}}$. Hence, $u$ is a minimal generator of $I(G)^{\{t+1\}}$ and this proves the lemma.
\end{proof}

In the following lemma, we study the degree of minimal monomial generators of $I(G)^{\{s\}}$ when $G$ is a connected Cameron-Walker graph.

\begin{lem} \label{gencon}
Suppose $G$ is a connected Cameron-Walker graph with $\ind-match(G)< \mathfrak{ht}(I(G))$ and let $s$ be an integer such that $\ind-match(G)+1\leq s\leq \mathfrak{ht}(I(G))$. Then the ideal $I(G)^{\{s\}}$ has a minimal monomial generator of degree $s+\ind-match(G)$.
\end{lem}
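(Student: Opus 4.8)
The plan is to argue by the structure theorem for connected Cameron-Walker graphs quoted in Section~\ref{sec2}, splitting into the three cases: star graph, star triangle, and a bipartite graph $H$ with partition $X\cup Y$ carrying pendant edges on $X$ and pendant triangles on $Y$. For a star graph we have $\mathfrak{ht}(I(G))=1=\ind\text{-}match(G)$, so the hypothesis $\ind\text{-}match(G)<\mathfrak{ht}(I(G))$ is vacuous and there is nothing to prove. For a star triangle with $t\geq 1$ triangles, $\mathfrak{ht}(I(G))=t+1$ and $\ind\text{-}match(G)=1$, so the only value in range is $s=t+1=s+\ind\text{-}match(G)-1+1$; here Lemma~\ref{gentri} produces a minimal generator of degree $2t+1=s+\ind\text{-}match(G)$, which is exactly what is claimed. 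So the real content is the third (generic) case.

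For the generic case I would first compute $\mathfrak{ht}(I(G))$ and $\ind\text{-}match(G)$ explicitly in terms of the combinatorial data $|X|$, the pendant edges, and the pendant triangles. A minimal vertex cover is obtained by taking all of $X$ together with, for each pendant triangle, two of its three vertices (and for pendant edges on $X$ nothing extra is forced beyond $X$), whereas $\ind\text{-}match(G)$ equals $|X|$ plus the number of vertices of $Y$ carrying at least one pendant triangle — one picks one pendant edge at each $X$-vertex and one triangle-edge (disjoint from $Y$) at each such $Y$-vertex. The difference $\mathfrak{ht}(I(G))-\ind\text{-}match(G)$ is then governed by how many \emph{extra} pendant triangles sit beyond the first one at each $Y$-vertex, matching the behaviour already seen for star triangles. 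Given $s$ in the stated range, the idea is to build an explicit squarefree monomial $u$ of degree $s+\ind\text{-}match(G)$ lying in $I(G)^{(s)}$: take the product of an induced matching of size $\ind\text{-}match(G)$ (contributing $2\ind\text{-}match(G)$ to the degree, and already $\ind\text{-}match(G)$ to the symbolic order because each edge lies in every minimal prime at least once) and then multiply by $s-\ind\text{-}match(G)$ further carefully chosen \emph{single} vertices coming from the surplus pendant triangles, each of which raises the symbolic order by one while raising the degree by one. This gives a squarefree monomial of the correct degree in $I(G)^{\{s\}}$.

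The main obstacle — and the step I would spend the most care on — is showing that this $u$ is a \emph{minimal} generator of $I(G)^{\{s\}}$, i.e. that $u/x_i\notin I(G)^{(s)}$ for every variable $x_i\mid u$. This is the same kind of verification as in the proof of Lemma~\ref{gentri}: for each variable in $u$ one must exhibit a minimal vertex cover $C$ of $G$ (equivalently a minimal prime $\mathfrak p_C$) such that the total $\mathfrak p_C$-order of $u/x_i$ drops below $s$, i.e. $u/x_i\notin\mathfrak p_C^{\,s}$. The cover $C=X\cup(\text{two vertices per pendant triangle})$ will witness most of these; one has to check the count $\deg_{\mathfrak p_C}(u)=s$ exactly and that removing any single variable of $u$ that lies in $C$ makes it $s-1$, while a variable of $u$ outside this particular $C$ is handled by choosing a slightly different cover (swapping which two of the three triangle-vertices are taken). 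Once minimality is established, Proposition~\ref{zero} and the fact that $s\le\mathfrak{ht}(I(G))$ guarantee $u$ is genuinely squarefree and nonzero in $I(G)^{\{s\}}$, completing the proof.
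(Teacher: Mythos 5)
Your overall strategy (case split via the Cameron--Walker structure theorem, then an explicit squarefree monomial equal to the product of an induced matching times $s$ minus the induced matching number many extra single vertices, with minimality certified by exhibiting suitable vertex covers) is the same as the paper's, but several of your concrete ingredients are wrong, and they matter. First, the invariant computations: for a star triangle with $t$ triangles the induced matching number is $t$ (the edges opposite the center form an induced matching), not $1$; and in the generic case the induced matching number is $p+t$, where $p=|X|$ and $t$ is the \emph{total} number of pendant triangles --- one edge per triangle, not one per triangle-carrying vertex of $Y$ --- while $\mathfrak{ht}(I(G))=p+k+t$, $k$ being the number of $Y$-vertices carrying at least one triangle. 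Hence the gap between the height and the induced matching number is $k$, not the number of ``surplus'' triangles beyond the first at each $Y$-vertex, and your proposed cover $X\cup(\text{two vertices per triangle})$ computes neither the height nor a tight bound. Second, and this is the real gap: the extra degree-one factors cannot be taken ``from the surplus pendant triangles''. With the correct induced matching every triangle vertex other than the attachment vertices $y_j$ already divides the matching product, so adjoining such a vertex destroys squarefreeness, while a second leaf $w_{i2}$ or a triangle-free $y_j$ fails to raise the order along covers that avoid it (e.g. $C_1=X\cup\{y_j:r_j\neq 0\}\cup\{z_{j\ell}\}$). The factors that work are exactly the attachment vertices $y_j$ with $r_j\geq 1$: for any cover $C$, either $y_j\in C$, or $y_j\notin C$ forces both vertices of each of its triangles into $C$, so the block at $y_j$ contributes at least $r_j+1$ to the $\mathfrak{p}_C$-order. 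This is the mechanism by which each extra factor raises every $\mathfrak{p}_C$-order by one, and it is precisely what your proposal asserts without proof and with the wrong choice of vertices.

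On minimality, your main witnessing cover is also inadequate: on the candidate monomial $u$ the cover $X\cup(\text{two vertices per triangle})$ has order $p+2t$, which strictly exceeds $s=p+t+m$ unless every triangle-carrying $Y$-vertex has exactly one triangle and $s=\mathfrak{ht}(I(G))$; a non-tight cover cannot show that $u/x$ drops below order $s$. One needs covers whose order on $u$ is exactly $s$, such as $C_1=X\cup\{y_1,\dots,y_k\}\cup\{z_{j\ell}\}$ (order $p+m+t=s$, handling the $x_i$, the $y_j$ dividing $u$, and the $z_{j\ell}$) together with a second tight cover, e.g. all leaves $w_{i\ell}$ together with $Y$ and the $z'_{j\ell}$, for the remaining variables. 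Finally, your trichotomy treats only star graphs among the bipartite cases; the bipartite members of the third structural case should be dismissed explicitly via K\"onig's theorem (induced matching number $=$ matching number $=$ height, so the hypothesis is vacuous). As written, the proposal would not produce a monomial of the correct degree, nor a valid minimality certificate, without these corrections.
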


\begin{proof}
If $G$ is a bipartite graph, then it follows from K${\rm\ddot{o}}$nig's theorem \cite[Theorem 3.1.16]{w} that$$\ind-match(G)={\rm match}(G)=\mathfrak{ht}(I(G)).$$So, there is nothing to prove in this case. Hence, assume that $G$ is not a bipartite graph. Suppose that $G$ is a star triangle graph and denote the number of triangles of $G$ by $t$. One can easily check that $\ind-match(G)=t$ and $\mathfrak{ht}(I(G))=t+1$. Therefore, $s=t+1$ and the assertion follows from Lemma \ref{gentri}. So, assume that $G$ consists of a connected bipartite graph $H$ with vertex partition $V(H)=X\cup Y$  such that there is at least one pendant edge attached to each vertex of $X$ and that there may be some pendant triangles attached to each vertex of $Y$.

As above, let $t$ denote the number of triangles of $G$. Since $G$ is not a bipartite graph, we have $t\geq 1$. Suppose $X=\{x_1, \ldots, x_p\}$ and $Y=\{y_1, \ldots, y_q\}$. Also, assume that for each integer $i=1, 2, \ldots, p$, the edges $x_iw_{i1}, \ldots, x_iw_{im_i}$ are the pendant edges attached to $x_i$, and for each integer $j=1, 2, \ldots, q$, the triangles with vertices $\{y_j, z_{j1}, z'_{j1}\}, \ldots, \{y_j, z_{jr_j}, z'_{jr_j}\}$ are the pendant triangles attached to $y_j$. In particular, $r_1+r_2+\cdots +r_q=t$. We know from \cite[Lemma 3.3]{s8} that $\ind-match(G)=p+t$. Let $k$ be the number of integers $j$ with $1\leq j\leq  q$ such that $r_j\neq 0$. We claim that $\mathfrak{ht}(I(G))=p+k+t$.

Note that the set$$\{x_1, \ldots, x_p\}\cup\{y_j, z_{j1}, \ldots, z_{jr_j}\mid 1\leq j\leq q \ {\rm and} \ r_j\neq 0\}$$ is a vertex cover of $G$ with cardinality $p+k+t$. In particular, $\mathfrak{ht}(I(G))\leq p+k+t$. The reverse inequality follows from the following observations.

{\bf (I)} Let $C$ be a vertex cover of $G$. Then for each pair of integers $i$ and $\ell$ with $1\leq i\leq p$ and $1\leq \ell\leq m_i$, we have $C\cap \{x_i, w_{\ell}\}\neq \emptyset$. Since $m_1, \ldots, m_p$ are nonzero integers, we deduce that$$|C\cap\{x_1, \ldots, x_p, w_{11}, \ldots, w_{1m_1}, w_{21}, \ldots w_{2m_2}, \ldots, w_{p1}, \ldots, w_{pm_p}\}|\geq p.$$

{\bf (II)} Let $C$ be a vertex cover of $G$. Then for each pair of integers $j$ and $\ell$ with $1\leq j\leq q$ and $1\leq \ell\leq r_j$, we have $|C\cap \{y_j, z_{j\ell}, z'_{j\ell}\}|\geq 2$. Therefore, $$|C\cap\{y_1, \ldots, y_q, z_{11}, \ldots, z_{1r_1}, \ldots, z_{q1}, \ldots, z_{qr_q}, z'_{11}, \ldots, z'_{1r_1}, \ldots, z'_{q1}, \ldots, z'_{qr_q}\}|\geq k+t.$$

The above observations imply that $\mathfrak{ht}(I(G))\geq p+k+t$ and the claim follows.

As $\ind-match(G)+1\leq s\leq \mathfrak{ht}(I(G))$, we may write $s=p+t+m$ for some integer $m$ with $1\leq m\leq k$. Without loss of generality, suppose that $r_1, \ldots, r_k\geq 1$ and $r_{k+1}= \cdots =r_q=0$. In other words, there is at least one pendant triangle attached to each of the vertices $y_1, \ldots, y_k$ and there is no pendant triangle attached to $y_{k+1}, \ldots, y_q$. Set$$u:=\prod_{i=1}^px_i\prod_{i=1}^pw_{i1}\prod_{j=1}^my_j\prod_{j=1}^k\prod_{\ell=1}^{r_j}(z_{j\ell}z'_{j\ell}).$$In particular, $u$ is a squarefree monomial of degree $2p+m+2t=s+\ind-match(G)$. We prove that $u$ is a minimal generator of $I(G)^{\{s\}}$. By observations (I) and (II) above, for every vertex cover $C$ of $G$, we have $u\in \mathfrak{p}_C^{p+t+m}=\mathfrak{p}_C^s$. Consequently, $u\in I(G)^{\{s\}}$. Set$$C_1:=\{x_1, \ldots, x_p, y_1, \ldots, y_k, z_{11}, \ldots, z_{1r_1}, z_{21}, \ldots, z_{2r_2},\ldots, z_{k1}, \ldots, z_{kr_k}\}.$$Then $C_1$ is a vertex cover of $G$. One can easily check that the monomials$$u/x_1, \ldots, u/x_p, u/y_1, \ldots, u/y_m, u/z_{11}, \ldots, u/z_{1r_1}, \ldots, u/z_{k1}, \ldots, u/z_{kr_k}$$do not belong to $\mathfrak{p}_{C_1}^s$. In particular, the above monomials do not belong to $I(G)^{\{s\}}$. Set$$C_2:=\{w_{11}, \ldots, w_{1m_1}, \ldots, w_{p1}, \ldots, w_{pm_p}, y_1, \ldots, y_q, z'_{11}, \ldots, z'_{1r_1}, \ldots, z'_{k1}, \ldots, z'_{kr_k}\}.$$Then $C_2$ is a vertex cover of $G$. One can easily check that the monomials$$u/w_{11}, \ldots, u/w_{p1}, u/z'_{11}, \ldots, u/z'_{1r_1}, \ldots, u/z'_{k1}, \ldots, u/z'_{kr_k}$$do not belong to $\mathfrak{p}_{C_2}^s$. Hence, these monomials do not belong to $I(G)^{\{s\}}$. Consequently, $u$ is a minimal generator of $I(G)^{\{s\}}$.
\end{proof}

In the following corollary, we extend the assertion of Lemma \ref{gencon} to disconnected Cameron-Walker graphs.

\begin{cor} \label{gendis}
Suppose $G$ is a (not necessarily connected) Cameron-Walker graph and let $s$ be an integer with $\ind-match(G)\leq s\leq \mathfrak{ht}(I(G))$. Then the ideal $I(G)^{\{s\}}$ has a minimal generator of degree $s+\ind-match(G)$. In particular,$${\rm reg}(I(G)^{\{s\}})\geq s+\ind-match(G).$$
\end{cor}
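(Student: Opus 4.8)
The plan is to reduce the disconnected case to the connected case handled by Lemma~\ref{gencon} (together with Lemma~\ref{genindmatch} for the boundary value $s=\ind-match(G)$), using the multiplicativity of symbolic powers and of minimal generators over connected components. First I would write $G=G_1\sqcup G_2\sqcup\cdots\sqcup G_c$ as the disjoint union of its connected components. Since the variable sets of distinct components are disjoint, the edge ideal factors as a "box product": $I(G)^{(s)}=\bigcap_{\mathfrak p\in\mathrm{Min}(I(G))}\mathfrak p^s$, and a minimal prime of $I(G)$ is exactly a sum $\mathfrak p_1+\cdots+\mathfrak p_c$ with $\mathfrak p_j\in\mathrm{Min}(I(G_j))$; a short computation then shows $I(G)^{\{s\}}=\sum_{s_1+\cdots+s_c=s}\; I(G_1)^{\{s_1\}}\cdots I(G_c)^{\{s_c\}}$, where each $s_j$ ranges over $0\le s_j\le \mathfrak{ht}(I(G_j))$ (with the convention $I(G_j)^{\{0\}}=S$). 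In particular, the minimal monomial generators of $I(G)^{\{s\}}$ that are products of generators coming from the individual components are obtained by choosing, for each distribution $(s_1,\dots,s_c)$ of $s$, a minimal generator $u_j$ of $I(G_j)^{\{s_j\}}$ and multiplying; because the variable sets are disjoint, $u_1\cdots u_c$ is automatically squarefree and is a minimal generator of $I(G)^{\{s\}}$.

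Next I would choose the distribution of $s$ optimally. We have $\ind-match(G)=\sum_j\ind-match(G_j)$ and $\mathfrak{ht}(I(G))=\sum_j\mathfrak{ht}(I(G_j))$, so the hypothesis $\ind-match(G)\le s\le\mathfrak{ht}(I(G))$ lets us pick integers $s_j$ with $\ind-match(G_j)\le s_j\le\mathfrak{ht}(I(G_j))$ and $\sum_j s_j=s$ (start from $s_j=\ind-match(G_j)$ and increment components one at a time up to their heights until the total reaches $s$). For each component $G_j$: if $s_j=\ind-match(G_j)$, Lemma~\ref{genindmatch} gives a minimal generator $u_j$ of $I(G_j)^{\{s_j\}}$ of degree $2\ind-match(G_j)=s_j+\ind-match(G_j)$; if $s_j>\ind-match(G_j)$ then automatically $\ind-match(G_j)<\mathfrak{ht}(I(G_j))$, so Lemma~\ref{gencon} (applied to the connected graph $G_j$) gives a minimal generator $u_j$ of $I(G_j)^{\{s_j\}}$ of degree $s_j+\ind-match(G_j)$. (The case $s_j=\ind-match(G_j)$ with possibly $s_j=0$, i.e.\ $G_j$ edgeless, is vacuous since then the component contributes nothing and can be dropped.) Multiplying, $u:=u_1\cdots u_c$ is a minimal generator of $I(G)^{\{s\}}$ of degree $\sum_j(s_j+\ind-match(G_j))=s+\ind-match(G)$.

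Finally, the regularity statement follows from the standard fact that the maximal degree of a minimal monomial generator of any nonzero homogeneous ideal $J$ is at most $\mathrm{reg}(J)$ (it is the largest $j$ with $\beta_{0,j}(J)\ne 0$), so $\mathrm{reg}(I(G)^{\{s\}})\ge s+\ind-match(G)$. I expect the main obstacle to be establishing cleanly the box-product formula $I(G)^{\{s\}}=\sum_{s_1+\cdots+s_c=s} I(G_1)^{\{s_1\}}\cdots I(G_c)^{\{s_c\}}$ and, more delicately, verifying that the product $u_1\cdots u_c$ of component-wise minimal generators is \emph{minimal} in $I(G)^{\{s\}}$ rather than merely a member of it: one must rule out that dropping a variable from some $u_j$ still lands in $I(G)^{\{s\}}$, which amounts to checking that the minimal-prime witnesses used in Lemmas~\ref{gentri} and~\ref{gencon} for each component can be combined (via $\mathfrak p_1+\cdots+\mathfrak p_c$) into a minimal prime of $I(G)$ detecting the corresponding quotient. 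This is bookkeeping about vertex covers of disjoint unions, but it is where the care is needed; everything else is additivity of the relevant combinatorial invariants.
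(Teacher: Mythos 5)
Your proof is correct and takes essentially the same approach as the paper: split $G$ into its connected components, use the additivity of the induced matching number and of $\mathfrak{ht}(I(G))$ over components to write $s=s_1+\cdots+s_c$ with each $s_i$ between the induced matching number and the height of the corresponding component, obtain a minimal generator of $I(G_i)^{\{s_i\}}$ of the right degree from Lemmas \ref{genindmatch} and \ref{gencon}, multiply, and then pass to regularity via $\beta_{0,j}$. The extra detail you supply (the componentwise decomposition of $I(G)^{\{s\}}$ and the combination of witness vertex covers to check minimality of the product $u_1\cdots u_c$) is precisely the bookkeeping the paper leaves implicit in its one-line assertion that $u_1\cdots u_c$ is a minimal generator.
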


\begin{proof}
Let $G_1, \ldots, G_c$ denote the connected components of $G$. Note that$$\ind-match(G)=\sum_{i=1}^c\ind-match(G_i) \ \ \ \ {\rm and} \ \ \ \ \mathfrak{ht}(I(G))=\sum_{i=1}^c\mathfrak{ht}(I(G)).$$Since $\ind-match(G)\leq s\leq \mathfrak{ht}(I(G))$, we may write $s=s_1+\cdots +s_c$ such that $\ind-match(G_i)\leq s_i\leq \mathfrak{ht}(I(G_i))$, for every integer $i$ with $1\leq i\leq c$. Using Lemmata \ref{genindmatch} and \ref{gencon}, each ideal $I(G_i)^{\{s_i\}}$ has a minimal monomial generator of degree $s_i+\ind-match(G_i)$. Then $u:=u_1u_2\cdots u_c$ is a minimal generator of $I(G)^{\{s\}}$ with ${\rm deg}(u)=s+\ind-match(G)$. The second assertion immediately follows from the first one.
\end{proof}

We are now ready to prove the main result of this section.

\begin{thm} \label{maincw}
Let $G$ be a Cameron-Walker graph. Then for every integer $s$ with $1\leq s\leq \mathfrak{ht}(I(G))$, we have$${\rm reg}(I(G)^{\{s\}})=s+\ind-match(G).$$
\end{thm}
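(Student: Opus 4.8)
The plan is to prove the two inequalities separately. The lower bound ${\rm reg}(I(G)^{\{s\}})\geq s+\ind-match(G)$ is already available: for $1\leq s\leq \ind-match(G)$ it follows from Theorem \ref{indmatch}, and for $\ind-match(G)\leq s\leq \mathfrak{ht}(I(G))$ it follows from Corollary \ref{gendis}, which produces a minimal monomial generator of $I(G)^{\{s\}}$ of degree $s+\ind-match(G)$ (the regularity of a monomial ideal is at least the largest degree of a minimal generator). So the real content is the reverse inequality ${\rm reg}(I(G)^{\{s\}})\leq s+\ind-match(G)$.

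For the upper bound, the first reduction is to the connected case: if $G_1,\ldots,G_c$ are the connected components of $G$, then $I(G)^{\{s\}}=\sum I(G_i)^{\{s_i\}}$ (a tensor-product-type decomposition over disjoint variable sets) with $s=s_1+\cdots+s_c$, and regularity adds up under such products, while both $\ind-match$ and $\mathfrak{ht}$ add up; so it suffices to bound ${\rm reg}(I(G_i)^{\{s_i\}})\leq s_i+\ind-match(G_i)$ for each component, where we may take $\ind-match(G_i)\leq s_i\leq \mathfrak{ht}(I(G_i))$ (if some $s_i$ falls below $\ind-match(G_i)$ we redistribute, but we also need to handle $1\le s_i\le\ind-match(G_i)$, which should follow because any Cameron--Walker graph is a subgraph-friendly situation — in fact one can simply bound via $\mathfrak{ht}$, or note that for a connected Cameron--Walker graph and $s\le\ind-match$ the bound $s+\ind-match$ is weaker than what the $s=\ind-match$ case already gives combined with monotonicity; I would be careful here). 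Then I would split into the three types of connected Cameron--Walker graphs from the Cameron--Walker classification. A star graph is a tree hence chordal, so Theorem \ref{mainchord} gives ${\rm reg}(I(G)^{\{s\}})\le s+\ord-match(G)=s+\ind-match(G)$ since for a star $\ord-match=\ind-match=1$. A star triangle is also chordal, and again $\ord-match=\ind-match=t$ for $t$ triangles, so Theorem \ref{mainchord} applies directly.

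The remaining and genuinely hard case is the mixed type: $G$ built from a connected bipartite graph $H$ with partition $X\cup Y$, with at least one pendant edge at every vertex of $X$ and pendant triangles possibly attached at vertices of $Y$. Here $\ord-match(G)$ can exceed $\ind-match(G)$ (the bipartite core $H$ can have large ordered matching number), so Theorem \ref{mainchord} is not sharp enough, and moreover $G$ need not be chordal. I expect the proof to proceed by induction on the number of vertices, peeling off a leaf $x$ attached to some vertex $v\in X$, using the standard short exact sequence / the formula ${\rm reg}(I)\le\max\{{\rm reg}(I:x)+1,{\rm reg}(I,x)\}$ from \cite[Lemma 2.10]{dhs}. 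For ${\rm reg}(I(G)^{\{s\}},x)$ one uses Lemma \ref{del} to replace $G$ by $G\setminus x$, which is still Cameron--Walker with the same $\ind-match$ (the leaf $x$ is not needed: $v$ still has another pendant neighbor, or if not, $v$ becomes... — this needs care, as deleting the unique pendant leaf at $v$ could change the structure), so induction applies. For the colon ideal ${\rm reg}(I(G)^{\{s\}}:x)$ one uses Lemma \ref{sub} with $W=N_G[x]=\{x,v\}$ to reduce to terms ${\rm reg}(I(G\setminus A)^{\{s\}}:\mathrm{x}_B)$ with $A\cup B=\{x,v\}$; the cases are $B=\{x\}$, $B=\{x,v\}$. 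In the $B=\{x,v\}$ case, $v$ is "absorbed" and one gets a colon by $xv$, which by a Lemma-\ref{chordcolon}-type identity (valid since $x$ is simplicial, $N_G(x)=\{v\}$) equals $I(G\setminus N_G[x])^{\{s-1\}}$; the graph $G\setminus\{x,v\}$ is again Cameron--Walker (removing $v\in X$ together with its pendant structure) and $\ind-match$ drops by exactly $1$, so induction on $s$ gives $(s-1)+(\ind-match(G)-1)+1=s+\ind-match(G)-1$, and adding $|B|-1=1$ gives exactly $s+\ind-match(G)$. The main obstacle is the bookkeeping of how $\ind-match$, $\mathfrak{ht}$, and the Cameron--Walker structure behave under these deletions — in particular verifying that $\ind-match(G\setminus N_G[x])=\ind-match(G)-1$ and that after deletion we stay within the valid range $\ind-match\le s'\le\mathfrak{ht}$ for the inductive hypothesis, together with handling the degenerate subcases (a $Y$-vertex with all its triangles removed, an $X$-vertex with only one pendant edge, the interaction with the bipartite core). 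A clean alternative I would also try: prove the sharper statement that $I(G)^{\{s\}}$ has a linear resolution shifted appropriately, or directly compute ${\rm reg}$ via the known formula for Cameron--Walker edge ideals together with a colon-ideal induction mirroring exactly the proof of \cite[Theorem 4.3]{s11} for $I(G)^{[s]}$, replacing $[s]$ by $\{s\}$ throughout and using Lemmas \ref{del}, \ref{chordcolon}, \ref{sub} as the squarefree-symbolic analogues of the ordinary-power lemmas used there.
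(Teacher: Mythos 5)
Your lower bound is exactly the paper's (Theorem \ref{indmatch} plus Corollary \ref{gendis}), and your treatment of star graphs and star triangles via Theorem \ref{mainchord} is correct since $\ord-match=\ind-match$ for those graphs. But the upper bound for the remaining case has a genuine gap, and it sits precisely at the point you flagged. In your induction you peel a pendant leaf $x$ attached to a vertex $v\in X$, and the ``deletion'' branch requires bounding ${\rm reg}(I(G)^{\{s\}},x)={\rm reg}(I(G\setminus x)^{\{s\}})$ by the induction hypothesis. However, $G\setminus x$ need not be Cameron--Walker when $x$ is the unique pendant leaf at $v$: take $X=\{v\}$, $Y=\{y\}$, the edge $vy$, one leaf $x$ at $v$ and one pendant triangle $\{y,z,z'\}$ at $y$; then $G$ is Cameron--Walker with ${\rm match}=\ind-match=2$, but $G\setminus x$ (a triangle with a pendant edge at $y$) has ${\rm match}=2$ and $\ind-match=1$. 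So the induction hypothesis is simply unavailable for that term, and since the target bound $s+\ind-match(G)$ is met with no slack in the colon branch, there is no room to absorb a failure here; the argument as proposed does not close. A second, smaller gap is the reduction to connected components: $I(G)^{\{s\}}$ is the sum $\sum_{s_1+\cdots+s_c=s}\prod_i I(G_i)^{\{s_i\}}$ over \emph{all} decompositions of $s$, not a single product over disjoint variable sets, so ``regularity adds up'' is not justified as stated; likewise the range $1\le s_i\le\ind-match(G_i)$ is not disposed of by any monotonicity you can cite.

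The paper avoids both problems. It runs the induction on the number of vertices of a possibly disconnected Cameron--Walker graph, handles the bipartite case by $I(G)^{\{s\}}=I(G)^{[s]}$ (via \cite[Theorem 5.9]{svv}) together with \cite[Theorem 4.3]{s11} and K\"onig's theorem, and in the non-bipartite case peels a pendant \emph{triangle} $\{x_1,x_2,x_3\}$ with ${\rm deg}_G(x_2)={\rm deg}_G(x_3)=2$ rather than a leaf at an $X$-vertex: deleting the attachment vertex $x_1$ always leaves a Cameron--Walker graph (the triangle degenerates to the edge $x_2x_3$), which is what rescues the deletion branch. The colon branch is then controlled by nested short exact sequences coloning successively by $x_1$, $x_1x_2$, $x_1x_2x_3$, using Lemmas \ref{del} and \ref{chordcolon} (with $x_2$ as the simplicial vertex), the identity $\ind-match(G\setminus N_G[x_2])=\ind-match(G)-1$ from \cite[Lemma 3.3]{s8}, and \cite[Lemma 4.2]{s3} to compare ${\rm reg}(J:x_1)$ with ${\rm reg}(J)$. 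This is essentially the ``clean alternative'' you mention at the end (mirroring the proof of \cite[Theorem 4.3]{s11} with $\{s\}$ in place of $[s]$); to repair your write-up you would need to switch the peeling from a pendant leaf to a pendant triangle (or otherwise handle the case of a unique leaf at $v$), and either drop the component reduction or justify it properly.
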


\begin{proof}
By Theorem \ref{indmatch} and Corollary \ref{gendis}, for every integer $s$ with $1\leq s\leq \mathfrak{ht}(I(G))$, we have$${\rm reg}(I(G)^{\{s\}})\geq s+\ind-match(G).$$So, it is enough to show that$${\rm reg}(I(G)^{\{s\}})\leq s+\ind-match(G).$$

Without loss of generality, assume that $G$ has no isolated vertex and suppose $V(G)=\{x_1, \ldots, x_n\}$. We use induction on $n$. If $n=2$, then $I(G)=(x_1x_2)$. Hence, $\mathfrak{ht}(I(G))=1$ and so, $s=1$. In this case the assertion is obvious. Therefore, suppose $n\geq 3$. If $G$ is a bipartite graph, then by \cite[Theorem 5.9]{svv}, we have $I(G)^{\{s\}}=I(G)^{[s]}$ and the assertion follows from \cite[Theorem 4.3]{s11} (note that by K${\rm\ddot{o}}$nig's theorem \cite[Theorem 3.1.16]{w}, we have $\ind-match(G)={\rm match}(G)=\mathfrak{ht}(I(G))$). Therefore, assume that $G$ is not a bipartite graph. It follows from the structure of Cameron-Walker graphs that $G$ has a pendant triangle, say $T$. Suppose without loss of generality that $V(T)=\{x_1, x_2, x_3\}$ and that ${\rm deg}_G(x_2)={\rm deg}_G(x_3)=2$. Consider the following short exact sequence.
\begin{align*}
0 \longrightarrow \frac{S}{(I(G)^{\{s\}}:x_1)}(-1)\longrightarrow \frac{S}{I(G)^{\{s\}}}\longrightarrow \frac{S}{I(G)^{\{s\}}+(x_1)}\longrightarrow 0
\end{align*}
It yields that
\[
\begin{array}{rl}
{\rm reg}(I(G)^{\{s\}})\leq \max\big\{{\rm reg}(I(G)^{\{s\}}:x_1)+1, {\rm reg}(I(G)^{\{s\}},x_1)\big\}.
\end{array} \tag{6} \label{11}
\]
As $G\setminus x_1$ is a (disconnected) Cameron-Walker graph with$$\ind-match(G\setminus x_1)\leq\ind-match(G),$$we conclude from Lemma \ref{del} and the induction hypothesis that
\[
\begin{array}{rl}
&{\rm reg}(I(G)^{\{s\}},x_1)={\rm reg}(I(G\setminus x_1)^{\{s\}},x_1)={\rm reg}(I(G\setminus x_1)^{\{s\}})\\ &\leq s+\ind-match(G\setminus x_1)
\leq s+\ind-match(G).
\end{array} \tag{7} \label{12}
\]

Therefore, using inequality (\ref{11}), we need to show that$${\rm reg}(I(G)^{\{s\}}:x_1)\leq s+\ind-match(G)-1.$$

Consider the following short exact sequence.
\begin{align*}
0 \longrightarrow \frac{S}{(I(G)^{\{s\}}:x_1x_2)}(-1)\longrightarrow \frac{S}{(I(G)^{\{s\}}:x_1)}\longrightarrow \frac{S}{(I(G)^{\{s\}}:x_1)+(x_2)}\longrightarrow 0
\end{align*}
It follows that
\[
\begin{array}{rl}
{\rm reg}(I(G)^{\{s\}}:x_1)\leq \max\big\{{\rm reg}(I(G)^{\{s\}}:x_1x_2)+1, {\rm reg}\big((I(G)^{\{s\}}:x_1), x_2\big)\big\}.
\end{array} \tag{8} \label{13}
\]

\vspace{0.3cm}
{\bf Claim 1.} ${\rm reg}(I(G)^{\{s\}}:x_1x_2)\leq s+\ind-match(G)-2$.

\vspace{0.3cm}
{\it Proof of Claim 1.} Consider the following short exact sequence.
\begin{align*}
0 \longrightarrow \frac{S}{(I(G)^{\{s\}}:x_1x_2x_3)}(-1)\longrightarrow \frac{S}{(I(G)^{\{s\}}:x_1x_2)}\longrightarrow \frac{S}{(I(G)^{\{s\}}:x_1x_2)+(x_3)}\longrightarrow 0
\end{align*}
It yields that
\[
\begin{array}{rl}
{\rm reg}(I(G)^{\{s\}}:x_1x_2)\leq \max\big\{{\rm reg}(I(G)^{\{s\}}:x_1x_2x_3)+1, {\rm reg}\big((I(G)^{\{s\}}:x_1x_2), x_3\big)\big\}.
\end{array} \tag{9} \label{14}
\]

It follows from \cite[Lemma 3.3]{s8} that $\ind-match(G\setminus N_G[x_2])=\ind-match(G)-1$. Moreover, $x_2$ is a simplicial vertex of $G$ with $N_G[x_2]=\{x_1, x_2, x_3\}$.  Therefore, we conclude from Lemma \ref{chordcolon} and the induction hypothesis that
\[
\begin{array}{rl}
&{\rm reg}(I(G)^{\{s\}}:x_1x_2x_3)={\rm reg}(I(G\setminus N_G[x_2])^{\{s-2\}})\leq (s-2)+\ind-match(G\setminus N_G[x_2])\\ &=s+\ind-match(G)-3.
\end{array} \tag{10} \label{15}
\]
On the other hand,$${\rm reg}\big((I(G)^{\{s\}}:x_1x_2), x_3\big)={\rm reg}\big((I(G)^{\{s\}}, x_3):x_1x_2\big)={\rm reg}\big(I(G\setminus x_3)^{\{s\}}:x_1x_2\big),$$where the second equality is a consequence of Lemma \ref{del}. As $x_2$ is a leaf of $G\setminus x_3$, it follows from Lemma \ref{chordcolon} that$$\big(I(G\setminus x_3)^{\{s\}}:x_1x_2\big)=I(G\setminus \{x_1, x_2, x_3\})^{\{s-1\}}.$$Therefore,$${\rm reg}\big((I(G)^{\{s\}}:x_1x_2), x_3\big)={\rm reg}\big(I(G\setminus \{x_1, x_2, x_3\})^{\{s-1\}}\big).$$Note that $G\setminus \{x_1, x_2, x_3\}$ is a Cameron-Walker graph. Moreover, we know from \cite[Lemma 3.3]{s8} that $\ind-match(G\setminus \{x_1, x_2, x_3\})=\ind-match(G)-1$. Thus, we deduce from the above equality and the induction hypothesis that
\[
\begin{array}{rl}
&{\rm reg}\big((I(G)^{\{s\}}:x_1x_2), x_3\big)\leq (s-1)+\ind-match(G\setminus\{x_1, x_2, x_3\})\\ &=s+\ind-match(G)-2.$$
\end{array} \tag{11} \label{16}
\]
Finally, the assertion of Claim 1 follows from inequalities (\ref{14}), (\ref{15}) and (\ref{16}).

\vspace{0.3cm}
{\bf Claim 2.} ${\rm reg}\big((I(G)^{\{s\}}:x_1), x_2\big)\leq s+\ind-match(G)-1$.

\vspace{0.3cm}
{\it Proof of Claim 2.} Consider the following short exact sequence.
\begin{align*}
0 \longrightarrow &\frac{S}{\big(\big((I(G)^{\{s\}}:x_1),x_2\big): x_3\big)}(-1)\longrightarrow \frac{S}{\big((I(G)^{\{s\}}:x_1),x_2\big)}\longrightarrow\\ &\frac{S}{\big((I(G)^{\{s\}}:x_1),x_2, x_3\big)}\longrightarrow 0
\end{align*}
It follows that
\[
\begin{array}{rl}
&{\rm reg}\big((I(G)^{\{s\}}:x_1), x_2\big)\leq\\ &\max\big\{{\rm reg}\big(\big((I(G)^{\{s\}}:x_1),x_2\big): x_3\big)+1, {\rm reg}\big((I(G)^{\{s\}}:x_1),x_2, x_3\big)\big\}.
\end{array} \tag{12} \label{17}
\]
Notice that
\begin{align*}
& {\rm reg}\big(\big((I(G)^{\{s\}}:x_1),x_2\big): x_3\big)={\rm reg}\big((I(G)^{\{s\}},x_2): x_1x_3\big)={\rm reg}\big((I(G\setminus x_2)^{\{s\}},x_2): x_1x_3\big)\\ &={\rm reg}\big(\big(I(G\setminus x_2)^{\{s\}}: x_1x_3\big),x_2\big)={\rm reg}\big(I(G\setminus x_2)^{\{s\}}: x_1x_3\big),
\end{align*}
where the second equality is a consequence of Lemma \ref{del}. As $x_1x_3$ is a pendant edge of $G\setminus x_2$, it follows from Lemma \ref{chordcolon} that$$\big(I(G\setminus x_2)^{\{s\}}:x_1x_3\big)=I(G\setminus \{x_1, x_2, x_3\})^{\{s-1\}}.$$Therefore,
\[
\begin{array}{rl}
& {\rm reg}\big(\big((I\{G\}^{\{s\}}:x_1),x_2\big): x_3\big)={\rm reg}\big(I(G\setminus \{x_1, x_2, x_3\})^{\{s-1\}}\big)\\ & \leq s+\ind-match(G)-2,
\end{array} \tag{13} \label{18}
\]
where the inequality is know by the argument of the proof of Claim 1.
On the other hand,
\begin{align*}
& {\rm reg}\big((I(G)^{\{s\}}:x_1),x_2, x_3\big)={\rm reg}\big((I(G)^{\{s\}},x_2,x_3): x_1\big)\\ &={\rm reg}\big((I(G\setminus \{x_2,x_3\})^{\{s\}},x_2, x_3): x_1\big)\\ &={\rm reg}\big(\big(I(G\setminus \{x_2,x_3\})^{\{s\}}: x_1\big),x_2, x_3\big)\\ &={\rm reg}\big(I(G\setminus \{x_2,x_3\})^{\{s\}}: x_1\big),
\end{align*}
where the second equality follows from Lemma \ref{del}.

We know from \cite[Lemma 4.2]{s3} that$${\rm reg}\big(I(G\setminus \{x_2,x_3\})^{\{s\}}: x_1\big)\leq {\rm reg}\big(I(G\setminus \{x_2,x_3\})^{\{s\}}\big).$$Consequently,$${\rm reg}\big((I(G)^{\{s\}}:x_1),x_2, x_3\big)\leq {\rm reg}\big(I(G\setminus \{x_2,x_3\})^{\{s\}}\big).$$Notice that $G\setminus \{x_2, x_3\}$ is a Cameron-Walker graph. Moreover, it follows from \cite[Lemma 3.3]{s8} that $\ind-match(G\setminus \{x_2, x_3\})=\ind-match(G)-1$. So, we deduce from the induction hypothesis and the above inequality that
\[
\begin{array}{rl}
& {\rm reg}\big((I(G)^{\{s\}}:x_1),x_2, x_3\big)\leq s+\ind-match(G\setminus \{x_2, x_3\})\\ & =s+\ind-match(G)-1.
\end{array} \tag{14} \label{19}
\]

It now follows from inequalities (\ref{17}), (\ref{18}) and (\ref{19}) that$${\rm reg}\big((I(G)^{\{s\}}:x_1), x_2\big)\leq s+\ind-match(G)-1,$$and this proves Claim 2.

\vspace{0.3cm}
We deduce from Claims 1, 2, and inequality (\ref{13}) that$${\rm reg}(I(G)^{\{s\}}:x_1)\leq s+\ind-match(G)-1.$$The above inequality together with inequalities (\ref{11}) and $\ref{12}$ implies that$${\rm reg}(I(G)^{\{s\}})\leq s+\ind-match(G)$$and this completes the proof of the theorem.
\end{proof}

\begin{rem} \label{orsynoeq}
By \cite[Theorem 4.3]{s11}, for any Cameron-Walker graph $G$ and for every integer $s$ with $1\leq s\leq {\rm match}(G)$, we have ${\rm reg}(I(G)^{[s]})=s+\ind-match(G)$. Thus, it follows from Theorem \ref{maincw} that for this class of graphs, the equality ${\rm reg}(I(G)^{\{s\}})={\rm reg}(I(G)^{[s]})$ holds, for each integer $s$ with $1\leq s\leq {\rm match}(G)$. It is natural to ask whether the same is true for any arbitrary graph $G$. However, the answer is negative. For instance, let $G$ be the graph with edge ideal $I(G)=(x_1x_2, x_1x_3, x_1x_4, x_2x_3)$. Then one can easily check that $I(G)^{[2]}=(x_1x_2x_3x_4)$ and $I(G)^{\{2\}}=(x_1x_2x_3)$. Hence, ${\rm reg}(I(G)^{[2]})=4$ while ${\rm reg}(I(G)^{\{2\}})=3$.
\end{rem}


\section{Second power} \label{sec5}

In this section, we study the regularity of the squarefree part of second symbolic power of edge ideals. It is known by \cite[Theorem 1.1]{bn} and \cite[Corollary 3.9]{s12} that for any graph $G$, $${\rm reg}(I(G)^2)\leq {\rm reg}(I(G))+2 \ \ \ {\rm and} \ \ \ {\rm reg}(I(G)^{(2)})\leq {\rm reg}(I(G))+2.$$Tuus, we conclude the following corollary from Proposition \ref{compsq}.

\begin{cor} \label{twth}
Let $G$ be a graph.
\begin{itemize}
\item[(i)] If $\rm match(G)\geq 2$, then ${\rm reg}(I(G)^{[2]})\leq {\rm reg}(I(G))+2$.
\item[(ii)] If $\mathfrak{ht}(I(G))\geq 2$, then ${\rm reg}(I(G)^{\{2\}})\leq {\rm reg}(I(G))+2$.
\end{itemize}
\end{cor}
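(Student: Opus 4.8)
The plan is to deduce both statements directly from Proposition \ref{compsq}, combined with the two known regularity bounds ${\rm reg}(I(G)^2)\le {\rm reg}(I(G))+2$ and ${\rm reg}(I(G)^{(2)})\le {\rm reg}(I(G))+2$ recalled just above. In each part the only point requiring a small argument is to verify that the relevant squarefree part is a \emph{nonzero} ideal, which is exactly what the hypothesis on ${\rm match}(G)$, respectively $\mathfrak{ht}(I(G))$, guarantees; once this is in place, Proposition \ref{compsq} applies verbatim.

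For part (i), recall that $I(G)^{[2]}$ is by definition the squarefree part of $I(G)^2$. Since ${\rm match}(G)\ge 2$, the graph $G$ has a matching $\{e_1,e_2\}$, and the monomial $e_1e_2$ is squarefree (because $e_1\cap e_2=\emptyset$) and lies in $I(G)^2$; hence $I(G)^{[2]}\ne 0$. Proposition \ref{compsq} then gives ${\rm reg}(I(G)^{[2]})\le {\rm reg}(I(G)^2)$, and combining this with ${\rm reg}(I(G)^2)\le {\rm reg}(I(G))+2$ (from \cite{bn}) yields the claim.

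For part (ii), the same strategy works with $I(G)^{(2)}$ in place of $I(G)^2$: the ideal $I(G)^{\{2\}}$ is the squarefree part of $I(G)^{(2)}$, and by Proposition \ref{zero} it is nonzero precisely because $2\le \mathfrak{ht}(I(G))$. Proposition \ref{compsq} then gives ${\rm reg}(I(G)^{\{2\}})\le {\rm reg}(I(G)^{(2)})$, and the bound ${\rm reg}(I(G)^{(2)})\le {\rm reg}(I(G))+2$ (from \cite{s12}) completes the proof.

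There is essentially no serious obstacle here: the content of the corollary is carried entirely by the two input inequalities from \cite{bn} and \cite{s12} and by the restriction-lemma fact packaged in Proposition \ref{compsq}. The only thing one must be careful not to drop is the nonvanishing hypothesis in each item, since Proposition \ref{compsq} is stated only for a nonzero squarefree part.
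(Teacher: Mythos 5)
Your proof is correct and follows the paper's own route exactly: both parts are immediate from Proposition \ref{compsq} applied to the bounds ${\rm reg}(I(G)^2)\leq {\rm reg}(I(G))+2$ and ${\rm reg}(I(G)^{(2)})\leq {\rm reg}(I(G))+2$ from \cite{bn} and \cite{s12}. Your extra care in checking nonvanishing of the squarefree parts (via a matching of size two, respectively Proposition \ref{zero}) is exactly the role of the stated hypotheses.
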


In Theorem \ref{sym2}, we will prove that the regularity of $I(G)^{\{2\}}$ is bounded above by ${\rm match}(G)+2$. The proof of that theorem is based on the inequality provided in Lemma \ref{rfirst} for $s=1$. Lemma \ref{rfirst} will be used also in Section \ref{sec6} to bound the regularity of $I(G)^{\{3\}}$. Because of this reason, we state and prove it for a general $s$ (and not only for $s=1$). In order to prove Lemma \ref{rfirst}, we use the following proposition whose proof is essentially the same as that of \cite[Proposition 3.1]{s11}. However, we include the proof for the sake of completeness.

\begin{prop} \label{matchgenord}
Assume that $G$ is a graph and set $s:={\rm match}(G)$. Then the monomials in $G(I(G)^{[s]})$ can be labeled as $u_1, \ldots, u_m$ such that
for every pair of integers $1\leq j< i\leq m$, there exists an integer $r\leq i-1$ such that $(u_r:u_i)$ is generated by a variable, and $(u_j:u_i)\subseteq (u_r:u_i)$.
\end{prop}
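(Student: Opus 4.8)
The plan is to mimic the proof of \cite[Proposition 3.1]{s11}, adapting the combinatorial bookkeeping to the squarefree part of ordinary powers rather than symbolic powers. Since $s={\rm match}(G)$, every minimal generator of $I(G)^{[s]}$ has the form $u=e_1\cdots e_s$ where $\{e_1,\dots,e_s\}$ is a \emph{maximum} matching of $G$; in particular $u$ has degree exactly $2s$, and the support of $u$ is a set of $2s$ vertices forming the vertex set of a perfect matching on the induced subgraph it spans. This rigidity (all generators have the same degree $2s$) is what makes the argument go through, and it is the feature we will exploit repeatedly.

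First I would fix a total order on the generators. The natural choice is to order $u_1,\dots,u_m$ by a suitable lexicographic or degree-compatible order on their supports; concretely, order the vertex set $x_1,\dots,x_n$ and then order the generators by comparing, say, the sorted support sets lexicographically, or equivalently order the monomials $u_i$ so that ``larger'' variables are used ``earlier.'' The precise order must be chosen so that the key property below holds; I expect the order used in \cite{s11} (where one orders so that the generators' supports are decreasing in lex) transfers verbatim. Having fixed the order, take $j<i$. Since $u_j$ and $u_i$ both have degree $2s$ and neither divides the other, the colon $(u_j:u_i)$ is generated by $u_j/\gcd(u_j,u_i)$, a squarefree monomial in the variables of $\operatorname{supp}(u_j)\setminus\operatorname{supp}(u_i)$.

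The heart of the matter is producing the intermediate generator $u_r$ with $r\le i-1$, $(u_r:u_i)$ generated by a single variable, and $(u_j:u_i)\subseteq(u_r:u_i)$. The idea: pick any variable $x\in\operatorname{supp}(u_j)\setminus\operatorname{supp}(u_i)$ that is, in our fixed vertex order, the ``largest'' such variable. Because $x$ appears in the maximum matching supporting $u_j$, it lies in some edge $e=xy\in E(G)$ with both endpoints in $\operatorname{supp}(u_j)$. Now I want to modify the matching supporting $u_i$ by swapping in the edge $e$ (or just the vertex $x$) in exchange for one vertex/edge currently used by $u_i$, obtaining a new maximum matching whose associated generator $u_r$ satisfies $\operatorname{supp}(u_r)=(\operatorname{supp}(u_i)\setminus\{z\})\cup\{x\}$ for an appropriate vertex $z\in\operatorname{supp}(u_i)\setminus\operatorname{supp}(u_j)$ — so that $(u_r:u_i)=(x)$. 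The existence of such a swap is an alternating-path / augmenting-path type argument: from the symmetric difference of the two maximum matchings one extracts a path or cycle through $x$ allowing the exchange while keeping the matching maximum. One then checks $(u_j:u_i)\subseteq(x)$: every variable in $\operatorname{supp}(u_j)\setminus\operatorname{supp}(u_i)$ is ``$\le x$'' by the maximality choice of $x$, but actually what we need is divisibility, so the order must be arranged so that $u_j/\gcd(u_j,u_i)$ is divisible by $x$ — i.e., $x$ is genuinely a \emph{common} factor of the colon generator, which holds since $x\in\operatorname{supp}(u_j)\setminus\operatorname{supp}(u_i)$. Finally one verifies $r\le i-1$, i.e. $u_r$ precedes $u_i$ in the chosen order: this is exactly where the compatibility between the vertex order, the monomial order, and the swap (replacing $z$ by the larger vertex $x$) is used — swapping in a larger variable should move the generator earlier.

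The main obstacle I anticipate is precisely this last coordination: choosing the total order on $G(I(G)^{[s]})$ so that simultaneously (a) for the right choice of $x$ the swapped generator $u_r$ exists as a \emph{maximum} matching generator, (b) $(u_r:u_i)=(x)$ with $x \mid (u_j:u_i)$, and (c) $u_r$ comes strictly before $u_i$. The augmenting-path argument guaranteeing (a) is standard matching theory but needs care when the symmetric difference contains cycles rather than paths; and ensuring (c) requires the order to be monotone under ``replace a vertex by a larger one,'' which forces the lexicographic convention to be set up correctly from the start. Since \cite[Proposition 3.1]{s11} carries out exactly this for $I(G)^{(s)}$ and the only difference here is that we work with $I(G)^{[s]}$ — whose generators for $s={\rm match}(G)$ are again exactly the products of maximum matchings, by the remark in Section~\ref{sec2} that $I(G)^{[s]}$ is generated by products of $s$-matchings — the argument should transfer with only notational changes, which is why we merely sketch it and refer to \cite{s11} for the combinatorial details.
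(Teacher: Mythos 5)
Your sketch is not the paper's argument and, as written, it has a genuine gap at exactly the point you flag as "the main obstacle." The swap step itself is fine (and your worry about cycles is unfounded: since $x\in\mathrm{supp}(u_j)\setminus\mathrm{supp}(u_i)$ is unmatched by the maximum matching $M_i$ underlying $u_i$, its component in $M_i\triangle M_j$ is a path starting with an $M_j$-edge, which cannot be $M_i$-augmenting, so it ends with an $M_i$-edge at some vertex $z\in\mathrm{supp}(u_i)\setminus\mathrm{supp}(u_j)$, and $M_i\triangle P$ is again a maximum matching with support $(\mathrm{supp}(u_i)\setminus\{z\})\cup\{x\}$). What you never establish is point (c): that this $u_r$ precedes $u_i$. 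Your heuristic "swapping in a larger variable should move the generator earlier" is unjustified, because the ejected vertex $z$ is dictated by the alternating path and is not under your control; choosing $x$ to be the largest element of $\mathrm{supp}(u_j)\setminus\mathrm{supp}(u_i)$ gives no a priori comparison between $x$ and $z$, so no lex-type order is automatically decreased. The missing idea is to choose $x$ to be the extreme element of the whole symmetric difference $\mathrm{supp}(u_i)\triangle\mathrm{supp}(u_j)$ (which lies in $\mathrm{supp}(u_j)\setminus\mathrm{supp}(u_i)$ precisely because $u_j$ comes earlier in the corresponding lex-type order on supports); then $z$, also lying in the symmetric difference, is automatically on the correct side of $x$, and the support $(\mathrm{supp}(u_i)\setminus\{z\})\cup\{x\}$ is strictly earlier. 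Without that observation the argument does not close, and deferring it to \cite[Proposition 3.1]{s11} does not help: as the paper itself notes, that proposition is proved by a different method, not by the lex-and-swap argument you describe.

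For comparison, the paper's proof avoids any explicit order construction: it takes Banerjee's ordering of $G(I(G)^s)$ from \cite[Theorem 4.12]{b} and restricts it to the squarefree generators. Banerjee's alternative (1), namely $(v_j:v_i)\subseteq (I(G)^{s+1}:v_i)$, is ruled out because it would produce a squarefree monomial in $I(G)^{s+1}$, contradicting $I(G)^{[s+1]}=0$ when $s=\mathrm{match}(G)$; and in alternative (2) the witness $v_k$ with $(v_k:v_i)=(x_p)$ is forced to be squarefree (all generators have degree $2s$ and $x_p$ does not divide $v_i$), hence is itself one of the $u_\ell$'s occurring earlier. So your route is a genuinely different, purely matching-theoretic one, and it can be completed as indicated above, but the proposal as submitted leaves its decisive step unproved.
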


\begin{proof}
Using \cite[Theorem 4.12]{b}, the elements of $G(I(G)^s)$ can be labeled as $v_1, \ldots, v_t$ such that for every pair of integers $1\leq j< i\leq t$, one of the following conditions holds.
\begin{itemize}
\item [(1)] $(v_j:v_i) \subseteq (I(G)^{s+1}:v_i)$; or
\item [(2)] there exists an integer $k\leq i-1$ such that $(v_k:v_i)$ is generated by a variable, and $(v_j:v_i)\subseteq (v_k:v_i)$.
\end{itemize}

Since $G(I(G)^{[s]})\subseteq G(I(G)^s)$, there exist integers $\ell_1, \ldots, \ell_m$ such that $G(I(G)^{[s]})=\{v_{\ell_1}, \ldots, v_{\ell_m}\}$. For every integer $k$ with $1\leq k\leq m$, set $u_k:=v_{\ell_k}$. We claim that this labeling satisfies the desired property. To prove the claim, we fix integers $i$ and $j$ with $1\leq j< i\leq m$. Based on properties (1) and (2) above, we divide the rest of the proof into two cases.

\vspace{0.3cm}
{\bf Case 1.} Assume that $(v_{\ell_j}:v_{\ell_i}) \subseteq (I(G)^{s+1}:v_{\ell_i})$. Recall that $v_{\ell_i}$ and $v_{\ell_j}$ are squarefree monomials. Therefore, $(v_{\ell_j}:v_{\ell_i})=(u)$, for some squarefree monomial $u$ with ${\rm gcd}(u, v_{\ell_i})=1$. This yields that $uv_{\ell_i}$ is a squarefree monomial in $I(G)^{s+1}$ which is a contradiction, as $I(G)^{[s+1]}=0$ by the definition of $s$.

\vspace{0.3cm}
{\bf Case 2.} Assume that there exists an integer $k\leq \ell_i-1$ such that $(v_k:v_{\ell_i})$ is generated by a variable, and $(v_{\ell_j}:v_{\ell_i})\subseteq (v_k:v_{\ell_i})$. Hence, $(v_k:v_{\ell_i})=(x_p)$, for some integer $p$ with $1\leq p\leq n$. It follows from the inclusion  $(v_{\ell_j}:v_{\ell_i})\subseteq (v_k:v_{\ell_i})$ that $x_p$ divides $v_{\ell_j}/{\rm gcd}(v_{\ell_j}, v_{\ell_i})$. Since, $v_{\ell_j}$ is a squarefree monomial, we deuce that $x_p$ does not divide $v_{\ell_i}$. As ${\rm deg}(v_k)={\rm deg}(v_{\ell_i})$, it follows from $(v_k:v_{\ell_i})=(x_p)$ that there is a variable $x_q$ dividing $v_{\ell_i}$ such that $v_k=x_pv_{\ell_i}/x_q$. This implies that $v_k$ is a squarefree monomial. Hence, $v_k=v_{\ell_r}=u_r$, for some integer $r$ with $1\leq r\leq m$. Using $k\leq \ell_i-1$, we have $\ell_r\leq \ell_i-1$. Therefore, $r\leq i-1$ and$$(u_j:u_i)\subseteq(u_r:u_i)=(x_p).$$ This completes the proof.
\end{proof}

In the following lemma, we provide an a method to bound the regularity of squarefree part symbolic powers of edge ideals.

\begin{lem} \label{rfirst}
Assume that $G$ is a graph and $s$ is an integer with $1\leq s\leq {\rm match}(G)$. Let $G(I(G)^{[s]})=\{u_1, \ldots, u_m\}$ denote the set of minimal monomial generators of $I(G)^{[s]}$. Then$${\rm reg}(I(G)^{\{s+1\}})\leq \max\bigg\{{\rm reg}\big(I(G)^{\{s+1\}}:u_i\big)+2s, 1\leq i\leq m, {\rm reg}\big(I(G)^{\{s+1\}}+I(G)^{[s]}\big)\bigg\}.$$
\end{lem}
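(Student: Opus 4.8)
The plan is to run an iterated short exact sequence argument, peeling off the generators $u_1, \ldots, u_m$ of $I(G)^{[s]}$ one at a time and controlling the regularity of each colon ideal that arises. First I would set $J_0 := I(G)^{\{s+1\}}$ and, for $1 \le k \le m$, set $J_k := I(G)^{\{s+1\}} + (u_1, \ldots, u_k)$, so that $J_m = I(G)^{\{s+1\}} + I(G)^{[s]}$ (here one uses that every $u_i$, being a product of $s$ disjoint edges, lies in $I(G)^{(s)}$ but need not be squarefree-minimal in the symbolic side — in any case $J_m$ is the stated ideal). For each $k$ there is a short exact sequence
\begin{align*}
0 \longrightarrow \frac{S}{(J_{k-1} : u_k)}(-\deg u_k) \longrightarrow \frac{S}{J_{k-1}} \longrightarrow \frac{S}{J_k} \longrightarrow 0,
\end{align*}
which yields ${\rm reg}(J_{k-1}) \le \max\{{\rm reg}(S/(J_{k-1}:u_k)) + \deg u_k + 1,\ {\rm reg}(J_k)\}$, and $\deg u_k = 2s$ since $u_k$ is a product of $s$ edges. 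Chaining these from $k=1$ to $k=m$ gives
$$
{\rm reg}(I(G)^{\{s+1\}}) \le \max\Big\{\ \max_{1 \le k \le m}\big({\rm reg}(S/(J_{k-1}:u_k)) + 2s + 1\big),\ {\rm reg}(I(G)^{\{s+1\}} + I(G)^{[s]})\ \Big\}.
$$

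The remaining task — and the main obstacle — is to replace the colon ideal $J_{k-1} : u_k = (I(G)^{\{s+1\}} + (u_1, \ldots, u_{k-1})) : u_k$ by the cleaner ideal $I(G)^{\{s+1\}} : u_k$ at no cost in regularity, i.e. to show ${\rm reg}(S/(J_{k-1}:u_k)) \le \max\{{\rm reg}(S/(I(G)^{\{s+1\}}:u_k)),\ \ldots\}$ in such a way that the leftover terms fold harmlessly into the bound. This is exactly where Proposition~\ref{matchgenord} enters: by that proposition we may assume the $u_i$ are labeled so that for every $j < k$ there is an index $r \le k-1$ with $(u_r : u_k) = (x_p)$ a single variable and $(u_j : u_k) \subseteq (u_r : u_k)$. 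Consequently $(u_1, \ldots, u_{k-1}) : u_k$ is generated by variables $x_{p_1}, \ldots, x_{p_\ell}$ (those arising as the $(u_r:u_k)$), so
$$
J_{k-1} : u_k = \big(I(G)^{\{s+1\}} : u_k\big) + (x_{p_1}, \ldots, x_{p_\ell}).
$$
Adding the variables $x_{p_1}, \ldots, x_{p_\ell}$ to $I(G)^{\{s+1\}}:u_k$ can only decrease the regularity of the quotient: repeatedly using ${\rm reg}(S/(L + (x))) \le {\rm reg}(S/L)$ for a variable $x$ (which follows from the short exact sequence $0 \to S/(L:x)(-1) \to S/L \to S/(L+(x)) \to 0$ together with ${\rm reg}(S/(L:x)) \le {\rm reg}(S/L)$, valid here since these are monomial ideals and adding a variable to a colon is again such an ideal — alternatively cite \cite[Lemma 4.2]{s3} / the restriction lemma \cite[Lemma 2.4]{hhz}). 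Hence ${\rm reg}(S/(J_{k-1}:u_k)) \le {\rm reg}(S/(I(G)^{\{s+1\}}:u_k)) = {\rm reg}(I(G)^{\{s+1\}}:u_k) - 1$, and substituting this into the chained inequality above gives precisely
$$
{\rm reg}(I(G)^{\{s+1\}}) \le \max\Big\{\ \max_{1 \le i \le m}\big({\rm reg}(I(G)^{\{s+1\}}:u_i) + 2s\big),\ {\rm reg}(I(G)^{\{s+1\}} + I(G)^{[s]})\ \Big\},
$$
which is the claim.

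The one point requiring care is the base of the chain versus its top: when $k=1$ there are no previous generators, so $J_0 : u_1 = I(G)^{\{s+1\}} : u_1$ directly, and the term ${\rm reg}(I(G)^{\{s+1\}}:u_1) + 2s$ appears as stated; for $k=m$ the tail term ${\rm reg}(J_m) = {\rm reg}(I(G)^{\{s+1\}} + I(G)^{[s]})$ is exactly the second quantity in the maximum. I do not anticipate trouble with the short exact sequences themselves — they are the standard ones used throughout the paper (e.g. in the proof of Theorem~\ref{maincw}) — so the entire weight of the argument rests on the combinatorial labeling of Proposition~\ref{matchgenord} and the elementary fact that adjoining variables does not raise regularity of a monomial quotient.
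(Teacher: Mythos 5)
Your filtration, short exact sequences, and the final ``adding variables does not raise regularity'' step are exactly the paper's argument, but there is a genuine gap at the one place where the whole weight rests: you invoke Proposition~\ref{matchgenord} for every $s$ with $1\leq s\leq {\rm match}(G)$, whereas that proposition is stated (and proved) only for $s={\rm match}(G)$; its proof uses $I(G)^{[s+1]}=0$ to kill the bad case. For $s<{\rm match}(G)$ your resulting claim that $(u_1,\ldots,u_{k-1}):u_k$ is generated by variables is simply false in general. For instance, take $s=1$ and $G$ containing two disjoint edges $u_1=x_1x_2$, $u_2=x_3x_4$ with no edge between them: then $(u_1:u_2)=(x_1x_2)$ is generated in degree $2$ and is not contained in any variable-generated colon ideal $(u_r:u_2)$, so no labeling of $G(I(G))$ has the property you assert. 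Since the lemma is applied in the paper precisely with $s=1$ and $s=2$, where typically $s<{\rm match}(G)$, this is not a marginal case.

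The repair is the one the paper makes: for general $s$ one uses the ordering of \cite[Proposition 3.1]{s11} (together with Proposition~\ref{matchgenord} when $s={\rm match}(G)$), which allows a second alternative, namely $(u_j:u_k)\subseteq (I(G)^{[s+1]}:u_k)$. This extra case is harmless because $(I(G)^{[s+1]}:u_k)\subseteq (I(G)^{\{s+1\}}:u_k)$ (a squarefree monomial of $I(G)^{s+1}$ lies in $I(G)^{(s+1)}$), so such generators are absorbed into the colon you want to keep; in the disjoint-edges example above, indeed $x_1x_2x_3x_4\in I(G)^{\{2\}}$. One then still gets the identity you need,
\[
\big(I(G)^{\{s+1\}}+(u_1,\ldots,u_{k-1})\big):u_k=\big(I(G)^{\{s+1\}}:u_k\big)+(\text{some variables}),
\]
and your chaining argument goes through verbatim from there. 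So the architecture of your proof is sound and coincides with the paper's; what is missing is the correct combinatorial input for $s<{\rm match}(G)$, without which the key displayed identity is unjustified.
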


\begin{proof}
By Proposition \ref{matchgenord} and \cite[Proposition 3.1]{s11}, we may assume that for every pair of integers $1\leq j< i\leq m$, one of the following conditions holds.
\begin{itemize}
\item [(i)] $(u_j:u_i) \subseteq (I(G)^{[s+1]}:u_i)\subseteq (I(G)^{\{s+1\}}:u_i)$; or
\item [(ii)] there exists an integer $k\leq i-1$ such that $(u_k:u_i)$ is generated by a variable, and $(u_j:u_i)\subseteq (u_k:u_i)$.
\end{itemize}
We conclude from (i) and (ii) above that
\[
\begin{array}{rl}
\big((I(G)^{\{s+1\})}+ (u_1, \ldots, u_{i-1})):u_i\big)=(I(G)^{\{s+1\}}:u_i)+({\rm some \ variables}).
\end{array}
\]
Hence, it follows from \cite[Lemma 2.10]{b} that
\[
\begin{array}{rl}
{\rm reg}\big((I(G)^{\{s+1\}}, u_1, \ldots, u_{i-1}):u_i\big) \leq {\rm reg}(I(G)^{\{s+1\}}:u_i).
\end{array} \tag{15} \label{20}
\]

For every integer $i$ with $0\leq i\leq m$, set $I_i:=(I(G)^{\{s+1\}}, u_1, \ldots, u_i)$. In particular, $I_0=I(G)^{\{s+1\}}$ and $I_m=I(G)^{\{s+1\}}+I(G)^{[s]}$. Consider the short exact sequence
$$0\rightarrow S/(I_{i-1}:u_i)(-2s)\rightarrow S/I_{i-1}\rightarrow S/I_i\rightarrow 0,$$
for every $1\leq i\leq m$. It follows that$${\rm reg}(I_{i-1})\leq \max \big\{{\rm reg}(I_{i-1}:u_i)+2s, {\rm reg}(I_i)\big\}.$$Therefore,
\begin{align*}
& {\rm reg}(I(G)^{\{s+1\}})={\rm reg}(I_0)\leq \max\big\{{\rm reg}(I_{i-1}:u_i)+2s, 1\leq i\leq m, {\rm reg}(I_m)\big\}\\ & =\max\big\{{\rm reg}(I_{i-1}:u_i)+2s, 1\leq i\leq m, {\rm reg}(I(G)^{\{s+1\}}+I(G)^{[s]})\big\}.
\end{align*}
The assertion now follows from the inequality (\ref{20}).
\end{proof}

According to Lemma \ref{rfirst}, for bounding the regularity of $I(G)^{\{2\}}$, one needs to study the ideals of the form $(I(G)^{\{2\}}:u)$ where $u$ is a minimal monomial generator of $I(G)$. The identity provided in the following lemma would be useful for this study.

\begin{lem} \label{intsec}
Let $G$ be a graph and suppose that $x_ix_j$ is an edge of $G$. Then for any integer $s$ with $1\leq s\leq\mathfrak{ht}(I(G))-1$, we have$$\big(I(G)^{\{s+1\}}:x_ix_j\big)=\big(I(G-x_j)^{\{s\}}:x_i\big)\cap \big(I(G-x_i)^{\{s\}}:x_j\big).$$
\end{lem}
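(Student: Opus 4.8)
The plan is to prove the set-theoretic identity by double inclusion, working at the level of monomial generators and using the description of symbolic powers as intersections of powers of the minimal primes together with the colon lemma \cite[Lemma 2]{s9} already invoked in Lemma \ref{chordcolon}. Throughout I would freely use that $I(G)^{(s+1)} = \bigcap_{C \in \mathcal{C}(G)} \mathfrak{p}_C^{s+1}$ and similarly for the subgraphs $G - x_i$ and $G - x_j$, and that a colon of an intersection is the intersection of the colons, so that $\big(I(G)^{(s+1)} : x_ix_j\big) = \bigcap_{C} \big(\mathfrak{p}_C^{s+1} : x_ix_j\big)$. The key combinatorial observation is that every minimal vertex cover $C$ of $G$ contains at least one of $x_i, x_j$ (since $x_ix_j \in E(G)$), and conversely a minimal vertex cover of $G - x_j$ together with $\{x_j\}$, or a minimal vertex cover of $G - x_i$ together with $\{x_i\}$, produces vertex covers of $G$; this is what links the three ideals.

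For the inclusion ``$\subseteq$'', let $v$ be a squarefree monomial in $\big(I(G)^{\{s+1\}} : x_ix_j\big)$, so $vx_ix_j \in I(G)^{(s+1)}$ and $vx_ix_j$ is squarefree; in particular $x_i, x_j \nmid v$. I would show $vx_i \in I(G - x_j)^{(s)}$: for each minimal vertex cover $C'$ of $G - x_j$, the set $C' \cup \{x_j\}$ contains a minimal vertex cover $C$ of $G$, and from $vx_ix_j \in \mathfrak{p}_C^{s+1}$ one peels off the factor $x_j$ (arguing as in \cite[Lemma 2]{s9}, using that $x_j$ appears to the first power) to get $vx_i \in \mathfrak{p}_{C'}^{s}$; intersecting over $C'$ gives $vx_i \in I(G - x_j)^{(s)}$, hence (being squarefree, since $x_i \nmid v$) $vx_i \in I(G-x_j)^{\{s\}}$, i.e. $v \in \big(I(G - x_j)^{\{s\}} : x_i\big)$. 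By the symmetric argument $v \in \big(I(G-x_i)^{\{s\}} : x_j\big)$, giving the first inclusion. Here one must be slightly careful about minimal versus arbitrary vertex covers when passing between $C'$ and $C$, but since $\mathfrak{p}_{C} \subseteq \mathfrak{p}_{C'}$-type containments go the right way and $I(G-x_j)^{(s)}$ can equally be computed using all vertex covers, this is routine.

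For the reverse inclusion ``$\supseteq$'', let $v$ be a squarefree monomial lying in both $\big(I(G-x_j)^{\{s\}} : x_i\big)$ and $\big(I(G-x_i)^{\{s\}} : x_j\big)$; then $x_i, x_j \nmid v$, the monomials $vx_i$ and $vx_j$ are squarefree, $vx_i \in I(G-x_j)^{(s)}$ and $vx_j \in I(G-x_i)^{(s)}$. I want $vx_ix_j \in \mathfrak{p}_C^{s+1}$ for every minimal vertex cover $C$ of $G$. Fix such a $C$. If $x_i, x_j \in C$, then $C \setminus \{x_j\}$ is a vertex cover of $G - x_j$, so $vx_i \in \mathfrak{p}_{C \setminus \{x_j\}}^{s}$, and multiplying by $x_j \in \mathfrak{p}_C$ gives $vx_ix_j \in \mathfrak{p}_C^{s+1}$. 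If $x_j \in C$ but $x_i \notin C$, then $C$ is already a vertex cover of $G - x_i$ containing no occurrence issue, so $vx_j \in \mathfrak{p}_{C}^{s}$ and again $vx_ix_j \in \mathfrak{p}_C^{s+1}$ after multiplying by $x_i \in \mathfrak{p}_C$; wait — this needs $x_i \in C$, so instead I use that $C$ is a vertex cover of $G - x_i$ and $vx_j \in \mathfrak{p}_C^s$, then since $x_j \in C$ we get $v \cdot x_j \cdot x_j$, which is not what we want, so in this case I instead observe $C \setminus\{?\}$... the clean way: when $x_j \in C$, use $C' := C$ if $x_i \in C$ (handled) and otherwise note $C$ covers $G-x_i$ and contains $x_j$, and write $vx_ix_j \in \mathfrak{p}_C^{s} \cdot \mathfrak{p}_C = \mathfrak{p}_C^{s+1}$ using $vx_j \in \mathfrak{p}_C^s$ and $x_i \in \mathfrak{p}_C$ only if $x_i \in C$. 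The honest fix is: since $C$ is a minimal vertex cover of $G$ and $x_ix_j \in E(G)$, exactly one or both of $x_i, x_j$ lie in $C$; in every case at least one of ``$C$ covers $G-x_j$ and contains $x_j$'' or ``$C$ covers $G-x_i$ and contains $x_i$'' holds, and then the factor corresponding to the contained vertex supplies the extra power. Intersecting over all $C$ yields $vx_ix_j \in I(G)^{(s+1)}$; it is squarefree by construction, so $vx_ix_j \in I(G)^{\{s+1\}}$, i.e. $v \in \big(I(G)^{\{s+1\}} : x_ix_j\big)$, completing the proof.

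The main obstacle, as the previous paragraph flags, is the bookkeeping in the ``$\supseteq$'' direction: for a minimal vertex cover $C$ of $G$ one must correctly pick which of $G - x_i$ or $G - x_j$ to pass to so that the membership $vx_i \in I(G-x_j)^{(s)}$ or $vx_j \in I(G-x_i)^{(s)}$ can be promoted to $\mathfrak{p}_C^{s+1}$ by multiplying in a variable that genuinely lies in $\mathfrak{p}_C$. Once one notes that $x_ix_j \in E(G)$ forces $C \cap \{x_i, x_j\} \neq \emptyset$ and treats the cases $x_i \in C$ and $x_j \in C$ symmetrically (with both-in-$C$ falling under either), this is straightforward. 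A secondary point to handle carefully is the hypothesis $s \leq \mathfrak{ht}(I(G)) - 1$, which guarantees (via Proposition \ref{zero}) that $I(G - x_i)^{\{s\}}$ and $I(G-x_j)^{\{s\}}$ are nonzero — one checks $\mathfrak{ht}(I(G-x_i)) \geq \mathfrak{ht}(I(G)) - 1 \geq s$ — so the colon ideals on the right-hand side are genuinely ideals in the sense needed.
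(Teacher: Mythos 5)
Your proof is correct and takes essentially the same route as the paper's: both directions are checked on monomial generators using the decomposition of symbolic powers into powers of vertex-cover primes, with the inclusion ``$\subseteq$'' obtained by peeling one variable off the prime powers (the paper phrases this as $ux_ix_j\in I(G)^{(s+1)}\Rightarrow ux_i\in I(G)^{(s)}$ followed by Lemma \ref{del}), and ``$\supseteq$'' by exactly the paper's case analysis on whether $x_i$ or $x_j$ lies in a given minimal vertex cover $C$, multiplying by the covered vertex to gain the extra power. The false start you flag in the ``$\supseteq$'' direction is resolved correctly and matches the paper's ``without loss of generality $x_i\in C$'' step.
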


\begin{proof}
Let $u$ be a minimal monomial generator of $\big(I(G)^{\{s+1\}}:x_ix_j\big)$. Then $ux_ix_j$ is a squarefree monomial in $I(G)^{(s+1)}$. Consequently, $u$ is not divisible by $x_i$ and $x_j$. We conclude from $ux_ix_j\in I(G)^{(s+1)}$ that $ux_i\in I(G)^{(s)}$. As $ux_i$ is a squarefree monomial and does not divide $x_j$, we deduce from Lemma \ref{del} that $ux_i\in I(G-x_j)^{\{s\}}$. Therefore, $u\in \big(I(G-x_j)^{\{s\}}:x_i\big)$. Similarly, $u$ belongs to $\big(I(G-x_i)^{\{s\}}:x_j\big)$. Hence,$$\big(I(G)^{\{s+1\}}:x_ix_j\big)\subseteq\big(I(G-x_j)^{\{s\}}:x_i\big)\cap \big(I(G-x_i)^{\{s\}}:x_j\big).$$

To prove the reverse inclusion, let $v$ be a minimal monomial generator of$$\big(I(G-x_j)^{\{s\}}:x_i\big)\cap \big(I(G-x_i)^{\{s\}}:x_j\big).$$In particular, $v$ is not divisible by $x_i$ and $x_j$. So, $vx_ix_j$ is a squarefree monomial. Hence, we must show that $vx_ix_j\in I(G)^{(s+1)}$. It is enough to prove that for any minimal vertex cover $C$ of $G$, we have $vx_ix_j\in \mathfrak{p}_C^{s+1}$. Let $C$ be a minimal vertex cover of $G$. It follows from $x_ix_j\in E(G)$ that $C$ contains at least one of the vertices $x_i$ and $x_j$. Without lose of generality, suppose $x_i\in C$. Since $v\in \big(I(G-x_i)^{\{s\}}:x_j\big)$, we have$$vx_j\in I(G-x_i)^{(s)}\subseteq I(G)^{(s)}\subseteq \mathfrak{p}_C^s.$$ This together with $x_i\in \mathfrak{p}_C$ implies that $vx_ix_j\in \mathfrak{p}_C^{s+1}$.
\end{proof}

We are now ready to prove the main result of this section.

\begin{thm} \label{sym2}
For any graph $G$, with $\mathfrak{ht}(I(G))\geq 2$ we have$${\rm reg}(I(G)^{\{2\}})\leq \min\big\{{\rm reg}(I(G))+2, {\rm match}(G)+2\big\}.$$In particular, Conjecture \ref{conjsym} is true for $s=2$.
\end{thm}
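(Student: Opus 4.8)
The plan is to establish the two bounds $\mathrm{reg}(I(G)^{\{2\}})\le \mathrm{reg}(I(G))+2$ and $\mathrm{reg}(I(G)^{\{2\}})\le {\rm match}(G)+2$ separately, since the minimum of two quantities is bounded above by each of them. The first bound is already available: it is exactly part (ii) of Corollary \ref{twth}, which follows from the known inequality $\mathrm{reg}(I(G)^{(2)})\le \mathrm{reg}(I(G))+2$ together with Proposition \ref{compsq}. So the entire substance of the proof lies in the second bound, $\mathrm{reg}(I(G)^{\{2\}})\le {\rm match}(G)+2$.

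For the second bound I would apply Lemma \ref{rfirst} with $s=1$. Writing $G(I(G))=\{u_1,\dots,u_m\}$ for the edges of $G$ viewed as quadratic monomials, Lemma \ref{rfirst} gives
\[
\mathrm{reg}(I(G)^{\{2\}})\le \max\Big\{\mathrm{reg}\big(I(G)^{\{2\}}:u_i\big)+2 \ (1\le i\le m),\ \mathrm{reg}\big(I(G)^{\{2\}}+I(G)\big)\Big\}.
\]
Now $I(G)^{\{2\}}\subseteq I(G)$, so $I(G)^{\{2\}}+I(G)=I(G)$ and the last term is just $\mathrm{reg}(I(G))$, which is $\le {\rm match}(G)+1\le {\rm match}(G)+2$ by \cite[Theorem 6.7]{hv} (the case $s=1$ of Conjecture \ref{conjsym}, recorded in the introduction). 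It remains to bound each colon term $\mathrm{reg}(I(G)^{\{2\}}:u_i)$, where $u_i=x_px_q$ for some edge $x_px_q\in E(G)$, by ${\rm match}(G)$.

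Here is where Lemma \ref{intsec} enters: with $s=1$ it gives
\[
\big(I(G)^{\{2\}}:x_px_q\big)=\big(I(G-x_q):x_p\big)\cap\big(I(G-x_p):x_q\big).
\]
Each of the two ideals $I(G-x_q):x_p$ and $I(G-x_p):x_q$ is the edge ideal of a graph plus possibly some linear forms coming from $N_G(x_p)$ or $N_G(x_q)$; more precisely $(I(G-x_q):x_p)=I(G\setminus N_G[x_p])+(N_G(x_p)\setminus\{x_q\})$ after deleting $x_q$, and similarly on the other side. I expect the cleanest route is to bound $\mathrm{reg}$ of the intersection directly by relating it, via an exact sequence $0\to S/(A\cap B)\to S/A\oplus S/B\to S/(A+B)\to 0$, to $\mathrm{reg}(A)$, $\mathrm{reg}(B)$ and $\mathrm{reg}(A+B)+1$. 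The ideals $A=I(G-x_q):x_p$ and $B=I(G-x_p):x_q$ are edge ideals of induced subgraphs of $G$ (up to extra variables that do not change regularity), so by \cite[Theorem 6.7]{hv} their regularity is at most ${\rm match}$ of the respective subgraph plus $1$; and since removing a vertex and its neighborhood drops the matching number, each of these is at most ${\rm match}(G)-1+1={\rm match}(G)$, actually one should get ${\rm match}(G)-1$ for $\mathrm{reg}$ of each edge ideal summand, hence $\mathrm{reg}(A),\mathrm{reg}(B)\le {\rm match}(G)-1\le {\rm match}(G)$ after accounting for the intersection term. Chasing the exact sequence then yields $\mathrm{reg}(I(G)^{\{2\}}:x_px_q)\le {\rm match}(G)$, and adding the $+2$ from Lemma \ref{rfirst} finishes the argument.

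The main obstacle I anticipate is the careful bookkeeping in the last step: one must verify that $A+B$ also has the right regularity bound (it is again an edge ideal of a subgraph obtained by deleting $N_G[x_p]\cup N_G[x_q]$, plus linear forms) and, crucially, that the matching-number drops are enough to absorb the $+1$'s coming from both \cite[Theorem 6.7]{hv} and the intersection exact sequence. A clean way to organize this is to prove a small auxiliary lemma: for any edge $x_px_q$ of $G$, $\mathrm{reg}\big((I(G-x_q):x_p)\cap(I(G-x_p):x_q)\big)\le {\rm match}(G)$, reducing everything to regularity bounds for edge ideals of proper induced subgraphs via the standard exact sequence, and using that ${\rm match}(G\setminus N_G[x])\le {\rm match}(G)-1$ whenever $x$ is not isolated. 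Once that lemma is in hand, the theorem is immediate from Lemma \ref{rfirst}, Lemma \ref{intsec}, and Corollary \ref{twth}(ii).
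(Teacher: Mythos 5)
Your overall architecture is the same as the paper's: the bound ${\rm reg}(I(G)^{\{2\}})\leq {\rm reg}(I(G))+2$ from Corollary \ref{twth}(ii), then Lemma \ref{rfirst} with $s=1$ (using $I(G)^{\{2\}}+I(G)=I(G)$ and \cite[Theorem 6.7]{hv}), reducing everything to the claim ${\rm reg}(I(G)^{\{2\}}:x_ix_j)\leq {\rm match}(G)$ for each edge, which you attack via Lemma \ref{intsec} and the Mayer--Vietoris sequence $0\to S/(I_1\cap I_2)\to S/I_1\oplus S/I_2\to S/(I_1+I_2)\to 0$. This is exactly Case 3 of the paper's Lemma \ref{colonsym2}. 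The gap is in the step you yourself flag as ``bookkeeping'': the term ${\rm reg}(I_1+I_2)+1$ forces you to show ${\rm match}\big(G\setminus(N_G[x_i]\cup N_G[x_j])\big)+2\leq {\rm match}(G)$, i.e.\ the matching number must drop by \emph{two}, not by one. The drop by two is obtained by exhibiting two disjoint edges $x_ix_p$, $x_jx_q$ with $x_p,x_q\notin\{x_i,x_j\}$ distinct, and this is only possible when both endpoints have degree at least two \emph{and} $|N_G(x_i)\cup N_G(x_j)|\geq 4$. Your stated input, ${\rm match}(G\setminus N_G[x])\leq{\rm match}(G)-1$ for non-isolated $x$, only yields a drop by one and gives ${\rm reg}(I(G)^{\{2\}}:x_ix_j)\leq{\rm match}(G)+1$, which after the $+2$ of Lemma \ref{rfirst} is too weak.

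The failure is genuine, not just an artifact of the estimate being loose in your write-up: take $G$ to be two disjoint edges $x_1x_2$, $x_3x_4$. Then $I_1=I_2=I_1+I_2=(x_3x_4)$ and the exact-sequence bound gives ${\rm reg}(I(G)^{\{2\}}:x_1x_2)\leq 3>{\rm match}(G)=2$, even though the colon ideal is $(x_3x_4)$ with regularity $2$; the same breakdown occurs for an edge of a triangle (there $N_G(x_i)\cup N_G(x_j)=\{x_i,x_j,x_k\}$, so no two disjoint escaping edges exist). This is precisely why the paper's Lemma \ref{colonsym2} splits into three cases: when $x_ix_j$ is a pendant edge it computes the colon directly as $I(G\setminus\{x_i,x_j\})$ via Lemma \ref{chordcolon}; when $x_i,x_j$ are the degree-two vertices of a pendant triangle with apex $x_k$ it first observes $x_k\in(I(G)^{\{2\}}:x_ix_j)$ and reduces to $I(G\setminus\{x_i,x_j,x_k\})$; only in the remaining case (both degrees $\geq 2$ and $|N_G(x_i)\cup N_G(x_j)|\geq 4$) does your Mayer--Vietoris argument apply, and there the two disjoint edges exist and absorb the extra $+1$. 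To complete your proposal you must add this case analysis (or some substitute handling of edges with $|N_G(x_i)\cup N_G(x_j)|\leq 3$); as written, the key auxiliary lemma is not proved.
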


\begin{proof}
We know from Corollary \ref{twth} that ${\rm reg}(I(G)^{\{2\}})\leq {\rm reg}(I(G))+2$. So, we should prove that$${\rm reg}(I(G)^{\{2\}})\leq {\rm match}(G)+2.$$

Let $G(I(G))=\{u_1, \ldots, u_m\}$ denote the set of minimal monomial generators of $I(G)$. Notice that $I(G)^{\{2\}}+I(G)=I(G)$. Therefore,$${\rm reg}(I(G)^{\{2\}}+I(G))={\rm reg}(I(G))\leq {\rm match}(G)+1,$$where the inequality follows from \cite[Theorem 6.7]{hv}. Hence, using Lemma \ref{rfirst}, it is enough to prove that for every integer $i$ with $1\leq i\leq m$,$${\rm reg}(I(G)^{\{2\}}:u_i)\leq {\rm match}(G).$$We prove the above inequality in the following lemma.
\end{proof}

\begin{lem} \label{colonsym2}
Let $G$ be a graph with $\mathfrak{ht}(I(G))\geq 2$ and suppose $x_ix_j$ is an edge of $G$. Then$${\rm reg}(I(G)^{\{2\}}:x_ix_j)\leq {\rm match}(G).$$
\end{lem}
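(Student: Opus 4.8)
The plan is to combine the identity of Lemma~\ref{intsec} with the standard short exact sequence for an intersection of ideals. Since $\mathfrak{ht}(I(G))\geq 2$, Lemma~\ref{intsec} applies with $s=1$, and as the first symbolic power of an edge ideal equals the edge ideal itself, it gives
$$\big(I(G)^{\{2\}}:x_ix_j\big)=A\cap B,\qquad A:=\big(I(G-x_j):x_i\big),\quad B:=\big(I(G-x_i):x_j\big).$$
Using the elementary identity $(I(H):x)=\big(N_H(x)\big)+I(H\setminus N_H[x])$, one computes $A=\big(N_G(x_i)\setminus\{x_j\}\big)+I(G\setminus N_G[x_i])$ and $B=\big(N_G(x_j)\setminus\{x_i\}\big)+I(G\setminus N_G[x_j])$. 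Adjoining variables that do not occur in an ideal does not increase its regularity (and yields regularity $1$ when the ideal is zero), so ${\rm reg}(A)\leq\max\{{\rm reg}(I(G\setminus N_G[x_i])),1\}$, and likewise for $B$. Since every matching of $G\setminus N_G[x_i]$ can be enlarged by the edge $x_ix_j$, we have ${\rm match}(G\setminus N_G[x_i])+1\leq{\rm match}(G)$; together with \cite[Theorem 6.7]{hv} this gives ${\rm reg}(A)\leq{\rm match}(G)$, and symmetrically ${\rm reg}(B)\leq{\rm match}(G)$.

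From the sequence $0\to A\cap B\to A\oplus B\to A+B\to 0$ one gets ${\rm reg}(A\cap B)\leq\max\{{\rm reg}(A),{\rm reg}(B),{\rm reg}(A+B)+1\}$, so everything reduces to bounding ${\rm reg}(A+B)$. Absorbing linear generators as before, a direct check yields
$$A+B=\big((N_G(x_i)\cup N_G(x_j))\setminus\{x_i,x_j\}\big)+I\big(G\setminus(N_G[x_i]\cup N_G[x_j])\big),$$
whence ${\rm reg}(A+B)\leq\max\{{\rm match}\big(G\setminus(N_G[x_i]\cup N_G[x_j])\big)+1,\,1\}$. The difficulty — and the real obstacle in the proof — is that this only delivers ${\rm reg}(A+B)+1\leq{\rm match}(G)+1$, one too many; to repair this I would split according to the subgraph $G_0:=G[N_G[x_i]\cup N_G[x_j]]$, which always contains the edge $x_ix_j$.

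If $G_0$ contains two disjoint edges, these are disjoint from every matching of $G\setminus(N_G[x_i]\cup N_G[x_j])$, so ${\rm match}\big(G\setminus(N_G[x_i]\cup N_G[x_j])\big)\leq{\rm match}(G)-2$; since ${\rm match}(G)\geq 2$ in this case, ${\rm reg}(A+B)\leq{\rm match}(G)-1$, and the exact sequence gives ${\rm reg}(A\cap B)\leq{\rm match}(G)$.

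If $G_0$ has no two disjoint edges, then its edges pairwise meet and, since $x_ix_j$ is one of them, $G_0$ is a star with centre $x_i$ or $x_j$, or the triangle on some vertex set $\{x_i,x_j,w\}$. When $G_0$ is a star with centre $x_i$, every neighbour of $x_j$ lies in $V(G_0)$ and is joined to $x_j$ by an edge of $G_0$ through the centre, forcing $N_G(x_j)=\{x_i\}$; then $x_j$ is isolated in $G-x_i$, so $B=I(G-x_i)$, and comparing generators shows $B\subseteq A$, hence $A\cap B=B$, and since a maximum matching of $G-x_i$ misses $x_j$ and is enlarged by $x_ix_j$, ${\rm reg}(A\cap B)\leq{\rm match}(G-x_i)+1\leq{\rm match}(G)$; the centre-$x_j$ case is symmetric. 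When $G_0$ is the triangle on $\{x_i,x_j,w\}$, then $N_G[x_i]=N_G[x_j]=\{x_i,x_j,w\}$, so $A=B=(w)+I(G\setminus\{x_i,x_j,w\})$ and ${\rm reg}(A\cap B)={\rm reg}(A)\leq{\rm match}(G\setminus\{x_i,x_j,w\})+1\leq{\rm match}(G)$. In every case ${\rm reg}\big(I(G)^{\{2\}}:x_ix_j\big)\leq{\rm match}(G)$, as desired. The two spots I expect to need the most care are the displayed formula for $A+B$ (an easy but fiddly absorption of linear forms) and the classification of $G_0$ with the ensuing degeneration $A\cap B\in\{A,B\}$; the genuine obstacle is the unit lost in the intersection sequence, which the dichotomy on $G_0$ is precisely designed to recover.
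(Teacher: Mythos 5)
Your proof is correct, and its engine is the same as the paper's main case: Lemma~\ref{intsec} with $s=1$, the Mayer--Vietoris sequence $0\to S/(A\cap B)\to S/A\oplus S/B\to S/(A+B)\to 0$, and the bound ${\rm reg}(I(H))\leq {\rm match}(H)+1$ of \cite{hv}, with the unit lost at ${\rm reg}(A+B)+1$ recovered by exhibiting two disjoint edges inside $N_G[x_i]\cup N_G[x_j]$. Where you differ is in the organization of the degenerate cases. The paper first splits off the two configurations where only one extra edge is available --- $x_ix_j$ a pendant edge, and $x_i,x_j$ the degree-two vertices of a pendant triangle --- and dispatches them via the simplicial-vertex colon formula of Lemma~\ref{chordcolon}, reducing the colon ideal to an edge ideal of a smaller graph; only the remaining case (${\rm deg}\,x_i,{\rm deg}\,x_j\geq 2$ and $|N_G(x_i)\cup N_G(x_j)|\geq 4$) goes through Lemma~\ref{intsec}. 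You instead run everything through Lemma~\ref{intsec} and dichotomize on whether $G_0=G[N_G[x_i]\cup N_G[x_j]]$ contains two disjoint edges; when it does not, the star/triangle classification of pairwise-intersecting edge sets makes the intersection collapse ($B\subseteq A$ in the pendant-edge situation, $A=B$ in the triangle situation), which is exactly the content the paper extracts from Lemma~\ref{chordcolon}. Your dichotomy is slightly coarser than the paper's trichotomy (your first case absorbs some pendant-edge configurations), and it buys a proof that never invokes Lemma~\ref{chordcolon}; the paper's version, in exchange, avoids the star/triangle classification and states the degenerate cases in directly graph-theoretic terms. Two small points worth writing out in a final version: the hypothesis $\mathfrak{ht}(I(G))\geq 2$ is what rules out $A$ or $B$ (and hence $A+B$) being zero, so the ``$\max\{\cdot,1\}$'' conventions never bite; and in the triangle case one should note that every vertex of $N_G[x_i]\cup N_G[x_j]$ other than $x_i,x_j$ is adjacent in $G_0$ to $x_i$ or $x_j$, so $V(G_0)=\{x_i,x_j,w\}$ and hence $N_G[x_i]=N_G[x_j]=\{x_i,x_j,w\}$, as you assert.
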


\begin{proof}
We divide the proof into three cases.

\vspace{0,3cm}
{\bf Case 1.} Assume that $x_ix_j$ is a pendant edge of $G$. Then it follows from Lemma \ref{chordcolon} that $(I(G)^{\{2\}}:x_ix_j)=I(G\setminus\{x_i,x_j\})$. Note that ${\rm match}(G\setminus\{x_i,x_j\})\leq {\rm match}(G)-1$. Thus, using \cite[Theorem 6.7]{hv}, we have$${\rm reg}(I(G)^{\{2\}}:x_ix_j)={\rm reg}(I(G\setminus\{x_i,x_j\}))\leq {\rm match}(G\setminus\{x_i,x_j\})+1\leq {\rm match}(G).$$

\vspace{0,3cm}
{\bf Case 2.} Assume that $x_i$ and $x_j$ are vertices of degree two in a pendant triangle of $G$. In other words, ${\rm deg}_G(x_i)={\rm deg}_G(x_j)=2$ and there is a vertex $x_k\in V(G)$ such that $x_ix_k, x_jx_k\in E(G)$. Then clearly, we have $x_k\in (I(G)^{\{2\}}:x_ix_j)$. Hence,
\begin{align*}
& (I(G)^{\{2\}}:x_ix_j)=(I(G)^{\{2\}}:x_ix_j)+(x_k)=\big((I(G)^{\{2\}}, x_k):x_ix_j\big)\\ & =\big((I(G-x_k)^{\{2\}}, x_k):x_ix_j\big)=(I(G-x_k)^{\{2\}}:x_ix_j)+(x_k),
\end{align*}
where the third equality is a consequence of Lemma \ref{del}. Therefore,$${\rm reg}\big(I(G)^{\{2\}}:x_ix_j\big)={\rm reg}\big(I(G-x_k)^{\{2\}}:x_ix_j\big).$$As the vertices $x_i, x_j, x_k$ form a pendant triangle of $G$, we deduce that the edge $x_ix_j$ is a connected component of $G-x_k$. Thus, using Lemma \ref{chordcolon} and the above equality, we have$${\rm reg}\big(I(G)^{\{2\}}:x_ix_j\big)={\rm reg}\big(I(G\setminus\{x_i, x_j, x_k\})\big).$$Since ${\rm match}(G\setminus\{x_i, x_j, x_k\})\leq {\rm match}(G)-1$, we deduce from \cite[Theorem 6.7]{hv} that$${\rm reg}\big(I(G\setminus\{x_i, x_j, x_k\})\big)\leq {\rm match}(G\setminus\{x_i, x_j, x_k\})+1\leq {\rm match}(G).$$So, the assertion follows in this case.

\vspace{0,3cm}
{\bf Case 3.} Assume that the degrees of $x_i$ and $x_j$ are at least two and moreover, $|N_G(x_i)\cup N_G(x_j)|\geq 4$. We know from Lemma \ref{intsec} that$$\big(I(G)^{\{2\}}:x_ix_j\big)=\big(I(G-x_j):x_i\big)\cap \big(I(G-x_i):x_j\big).$$Set$$I_1:=(I(G-x_j):x_i) \ \ \ {\rm and} \ \ \ I_2:=(I(G-x_i):x_j).$$Therefore, $\big(I(G)^{\{2\}}:x_ix_j\big)=I_1\cap I_2$. Consequently, we have the following short exact sequence.
\begin{align*}
0\rightarrow \frac{S}{(I(G)^{\{2\}}:x_ix_j)}\rightarrow \frac{S}{I_1}\oplus \frac{S}{I_2}\rightarrow \frac{S}{I_1+I_2}\rightarrow 0.
\end{align*}
Applying \cite[Corollary 18.7]{p'} on the above exact sequence yields that
\[
\begin{array}{rl}
{\rm reg}\big(I(G)^{\{2\}}:x_ix_j\big)\leq \max\{{\rm reg}(I_1), {\rm reg}(I_2), {\rm reg}(I_1+I_2)+1\}.
\end{array} \tag{16} \label{21}
\]
Note that$$I_1=(I(G-x_j):x_i)=I(G-N_G[x_i])+({\rm some \ variables}).$$Hence, by \cite[Lemma 2.10]{b}, we have ${\rm reg}(I_1)\leq {\rm reg}(I(G-N_G[x_i]))$. Observe that $G-N_G[x_i]$ is an induced subgraph of $G-\{x_i, x_j\}$. Then a similar argument as in Case 1 implies that ${\rm reg}(I(G-N_G[x_i]))\leq {\rm match}(G)$ and so,
\[
\begin{array}{rl}
{\rm reg}(I_1)\leq {\rm match}(G).
\end{array} \tag{17} \label{22}
\]
By symmetry,
\[
\begin{array}{rl}
{\rm reg}(I_2)\leq {\rm match}(G).
\end{array} \tag{18} \label{23}
\]

Moreover, notice that$$I_1+I_2=(I(G-x_j):x_i)+(I(G-x_i):x_j)=I\big(G-(N_G[x_i]\cup N_G[x_j])\big)+({\rm some \ variables}).$$Using \cite[Lemma 2.10]{b}, we deduce that
\[
\begin{array}{rl}
{\rm reg}(I_1+I_2)\leq {\rm reg}\big(I(G-(N_G[x_i]\cup N_G[x_j]))\big).
\end{array} \tag{19} \label{24}
\]
As we are assuming that the degrees of $x_i$ and $x_j$ are at least two and $|N_G(x_i)\cup N_G(x_j)|\geq 4$, there are distinct vertices $x_p, x_q\notin\{x_i, x_j\}$ such that $x_ix_p$ and $x_jx_q$ are edges of $G$. Then for every matching $M$ of $G-(N_G[x_i]\cup N_G[x_j])$, the set $M\cup\{x_ix_p, x_jx_q\}$ is a matching of $G$. In particular,$${\rm match}\big(G-(N_G[x_i]\cup N_G[x_j])\big)+2\leq {\rm match}(G).$$Thus, we conclude from \cite[Theorem 6.7]{hv} that
\[
\begin{array}{rl}
{\rm reg}\big(I(G-(N_G[x_i]\cup N_G[x_j]))\big)+1\leq {\rm match}\big(G-(N_G[x_i]\cup N_G[x_j])\big)+2\leq {\rm match}(G).
\end{array} \tag{20} \label{26}
\]
Inequalities (\ref{24}) and (\ref{26}) imply that
\[
\begin{array}{rl}
{\rm reg}(I_1+I_2)+1\leq {\rm match}(G).
\end{array} \tag{21} \label{27}
\]
The assertion now follows by combining inequalities (\ref{21}), (\ref{22}), (\ref{23}) and (\ref{27}).
\end{proof}


\section{Third power} \label{sec6}

In this section, we investigate the regularity of the squarefree part of $I(G)^{\{3\}}$. As the main result, in Theorem \ref{mainpo3}, we provide a sharp upper bound for the regularity of $I(G)^{\{3\}}$. The proof of Theorem \ref{mainpo3} is based on the inequality obtained in Lemma \ref{rfirst}. So, we need to analyze the ideals of the form $(I(G)^{\{3\}}:u)$ where $u$ is a minimal monomial generator of $I(G)^{[2]}$. In Lemma \ref{colsy}, we will see that these ideals can be expressed in terms of squarefree part of second symbolic powers of suitable ideals. In order to prove Lemma \ref{colsy}, we need the following lemma.

\begin{lem} \label{symor}
Let $G$ be a graph and suppose that $e=x_1x_2$ is an edge of $G$. Then$$\big(I(G)^{(2)}:x_1x_2\big)+(x_1, x_2)=\big(I(G)^{\{2\}}:x_1x_2\big)+(x_1, x_2).$$
\end{lem}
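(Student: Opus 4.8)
\textbf{Proof proposal for Lemma \ref{symor}.}
The plan is to prove the two inclusions separately. Since $I(G)^{\{2\}}\subseteq I(G)^{(2)}$ always holds (the squarefree part of an ideal is contained in the ideal), the inclusion $\big(I(G)^{\{2\}}:x_1x_2\big)+(x_1,x_2)\subseteq \big(I(G)^{(2)}:x_1x_2\big)+(x_1,x_2)$ is immediate, so the real content is the reverse inclusion $\big(I(G)^{(2)}:x_1x_2\big)+(x_1,x_2)\subseteq \big(I(G)^{\{2\}}:x_1x_2\big)+(x_1,x_2)$. As all ideals in sight are monomial, it suffices to take an arbitrary monomial $w\in \big(I(G)^{(2)}:x_1x_2\big)$ and show that $w\in \big(I(G)^{\{2\}}:x_1x_2\big)+(x_1,x_2)$; equivalently, if $w$ is not divisible by $x_1$ or $x_2$, then $wx_1x_2\in I(G)^{\{2\}}$.

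First I would reduce to the squarefree case. Write $w = w' \cdot w''$ where $w'$ is the ``squarefree support'' (the product of the distinct variables dividing $w$) and note that since $w\in \big(I(G)^{(2)}:x_1x_2\big)$ and $I(G)^{(2)}$ is a monomial ideal, $w'$ divides a minimal monomial generator's associated squarefree radical consideration — more precisely, $w$ being in the colon ideal means $w x_1 x_2 \in I(G)^{(2)} = \bigcap_{\mathfrak p\in \mathrm{Min}(I(G))}\mathfrak p^2$, and for each such prime $\mathfrak p=\mathfrak p_C$ with $C$ a minimal vertex cover, membership in $\mathfrak p_C^2$ only depends on how many variables of $C$ divide the monomial, counted \emph{without} multiplicity for the purpose of getting exponent sum $\geq 2$ when at least two distinct variables of $C$ appear. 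So if $w$ is not divisible by $x_1$ or $x_2$, I would consider the squarefree monomial $v := $ (radical of $w$) times $x_1x_2$, which is squarefree, and check that $v\in I(G)^{(2)}$ by verifying $v\in \mathfrak p_C^2$ for every minimal vertex cover $C$.

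The key point in that verification: fix a minimal vertex cover $C$ of $G$. Since $x_1x_2\in E(G)$, $C$ contains $x_1$ or $x_2$, say $x_1\in C$. Now $wx_1x_2\in \mathfrak p_C^2$; since $x_1\in \mathfrak p_C$, either $w$ already lies in $\mathfrak p_C$ (i.e. some variable of $C$ divides $w$), in which case $v = \mathrm{rad}(w)x_1x_2$ also has that variable of $C$ dividing it \emph{and} has $x_1$ dividing it, giving two distinct $C$-variables (note $x_1$ does not divide $w$ by assumption, so these are genuinely distinct) hence $v\in \mathfrak p_C^2$; or else $x_1^2$ must divide $wx_1x_2$, forcing $x_2\in C$ as well (since $x_1$ appears only once in $wx_1x_2$, impossible) — wait, this needs care, so instead: if no variable of $C$ divides $w$, then from $wx_1x_2\in\mathfrak p_C^2$ and only $x_1,x_2$ among $\{$variables of $wx_1x_2\} \cap C$ being possible, we need the exponent sum over $C$-variables to be $\geq 2$, which with $x_1$ appearing to the first power forces $x_2\in C$, so both $x_1,x_2\in C$ and then $v$, divisible by both, lies in $\mathfrak p_C^2$. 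In all cases $v\in\mathfrak p_C^2$, so $v\in I(G)^{(2)}$ and being squarefree, $v\in I(G)^{\{2\}}$. Finally, since $\mathrm{rad}(w)$ and $w$ have the same variable support and neither is divisible by $x_1,x_2$, we get $\mathrm{rad}(w)\in \big(I(G)^{\{2\}}:x_1x_2\big)$, and $w/\mathrm{rad}(w)$ times this shows $w\in \big(I(G)^{\{2\}}:x_1x_2\big)$ up to the fact that $I(G)^{\{2\}}$ need not be closed under the relevant multiplication — but $\big(I(G)^{\{2\}}:x_1x_2\big)$ \emph{is} an ideal, so $w = \big(w/\mathrm{rad}(w)\big)\cdot \mathrm{rad}(w)\in \big(I(G)^{\{2\}}:x_1x_2\big)$, completing the reverse inclusion.

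The main obstacle I anticipate is the bookkeeping in the middle step: correctly arguing that squarefree-ifying a monomial $w$ (while keeping the fixed factor $x_1x_2$) preserves membership $wx_1x_2\in\mathfrak p_C^2$ for \emph{every} minimal vertex cover $C$. The subtlety is that $\mathfrak p_C^2$-membership of a monomial $\mu$ is equivalent to: ($\mu$ divisible by the square of some variable in $C$) or ($\mu$ divisible by a product of two distinct variables in $C$); passing to the radical destroys the first alternative but, as the case analysis above shows, whenever the first alternative was the \emph{only} one available for $wx_1x_2$, the variable being squared must be $x_1$ or $x_2$ and then $C$ actually contains both $x_1$ and $x_2$ (because $w$ is coprime to $x_1x_2$ and $x_1x_2\in E(G)$), so the second alternative holds for $v$. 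Making this dichotomy airtight — especially handling the edge case where $C\cap\{$support of $w\} = \emptyset$ — is where I would spend the most care.
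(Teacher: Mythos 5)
Your argument is correct, but it takes a different route from the paper. The paper's proof is a two-line reduction: it passes to a minimal monomial generator $u$ of the colon ideal not divisible by $x_1,x_2$ and then invokes an external result (Lemma 3.2 of the author's paper on small symbolic powers, cited as \cite{s12}) asserting that $\big(I(G)^{(2)}:x_1x_2\big)$ is generated by squarefree monomials; squarefreeness of $u$ immediately gives $ux_1x_2\in I(G)^{\{2\}}$. You instead reprove the needed squarefreeness phenomenon from scratch: using $I(G)^{(2)}=\bigcap_{C\in\mathcal{C}(G)}\mathfrak{p}_C^2$, you show for every minimal vertex cover $C$ that $\mathrm{rad}(w)\,x_1x_2\in\mathfrak{p}_C^2$, via the dichotomy (some variable of $C$ divides $w$, which together with $x_1\in C$ or $x_2\in C$ gives two distinct $C$-variables) versus (no variable of $C$ divides $w$, which forces both $x_1,x_2\in C$ since the $C$-exponent sum of $wx_1x_2$ must be at least $2$ and $x_1,x_2$ each appear to the first power); then $\mathrm{rad}(w)x_1x_2$ is a squarefree element of $I(G)^{(2)}$, hence of $I(G)^{\{2\}}$, and membership of $w$ follows because the colon is an ideal. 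This is airtight, and it buys self-containedness — in effect you verify the special case of \cite[Lemma 3.2]{s12} that this lemma actually needs — at the cost of a longer, more bookkeeping-heavy argument; the paper's approach buys brevity by outsourcing exactly that bookkeeping. One cosmetic remark: the sentence in your second paragraph about membership in $\mathfrak{p}_C^2$ ``counted without multiplicity'' is garbled as stated (membership is governed by the exponent sum over $C$-variables, with multiplicity), and there is a visible false start later; the corrected case analysis that follows is the right statement, so in a clean write-up you should delete those passages and keep only the final dichotomy.
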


\begin{proof}
The inclusion "$\supseteq$" is obvious. To prove the reverse inclusion, let $u$ be a minimal monomial generator of $(I(G)^{(2)}:x_1x_2)+(x_1, x_2)$. We may assume that $u$ is not divisible by $x_1$ and $x_2$, as otherwise there is nothing to prove. Therefore, $u$ is a minimal monomial generator of $(I(G)^{(2)}:x_1x_2)$. In particular, $ux_1x_2\in I(G)^{(2)}$. By \cite[Lemma 3.2]{s12}, we know that $u$ is a squarefree monomial. As $u$ is not divisible by $x_1$ and $x_2$, it follows that $ux_1x_2$ is a squarefree monomial as well. Consequently, $u\in (I(G)^{\{2\}}:x_1x_2)$.
\end{proof}

\begin{lem} \label{colsy}
Assume that $G$ is a graph with edge set $E(G)=\{e_1, \ldots, e_r\}$, and let $s$ be an integer with $1\leq s\leq {\rm match}(G)$. Suppose  $u=e_{i_1}\ldots e_{i_s}$ is a minimal monomial generator of $I(G)^{[s]}$. Then$$\big(I(G)^{\{s+1\}}:u\big)=\bigg(\big(I(G)^{\{2\}}:e_{i_1}\big)^{\{s\}}: e_{i_2}\ldots e_{i_s}\bigg).$$
\end{lem}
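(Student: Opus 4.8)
The plan is to prove the two monomial ideals are equal by comparing, for each side, which squarefree monomials it contains that are coprime to $u$. After relabeling write $e_{i_1}=x_1x_2$, $w=e_{i_2}\cdots e_{i_s}=\mathrm{x}_W$ (so $W$ is the set of $2(s-1)$ vertices covered by $e_{i_2},\dots,e_{i_s}$), and $X:={\rm supp}(u)=e_{i_1}\cup W$, a set of $2s$ vertices partitioned by the matching $\{e_{i_1},\dots,e_{i_s}\}$. The first step is a routine reduction: both $\big(I(G)^{\{s+1\}}:u\big)$ and $\big(\big(I(G)^{\{2\}}:e_{i_1}\big)^{\{s\}}:w\big)$ are squarefree monomial ideals all of whose minimal generators are coprime to $u$ (a colon of a squarefree monomial ideal by a squarefree monomial is squarefree with generators coprime to that monomial; moreover $G\big(I(G)^{\{2\}}:x_1x_2\big)$ avoids $x_1,x_2$, a property inherited by symbolic powers and preserved by the further colon with $w$, whose support is disjoint from $e_{i_1}$ since the $e_{i_j}$ form a matching). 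Hence each side is determined by which squarefree monomials coprime to $u$ it contains, so it suffices to show, for such a monomial $v$, that $vu\in I(G)^{(s+1)}$ if and only if $vw\in\big(I(G)^{\{2\}}:e_{i_1}\big)^{(s)}$. The case $I(G)^{\{s+1\}}=0$ is handled separately: together with $s\le{\rm match}(G)\le\mathfrak{ht}(I(G))$ it forces $s={\rm match}(G)=\mathfrak{ht}(I(G))$, and then a minimum vertex cover, having the same cardinality as the maximum matching $\{e_{i_j}\}$ whose edges are pairwise disjoint, meets each $e_{i_j}$ in exactly one vertex; so $\mathfrak{ht}\big(I(G)^{\{2\}}:e_{i_1}\big)\le s-1$ and, by Proposition \ref{zero}, $\big(I(G)^{\{2\}}:e_{i_1}\big)^{\{s\}}=0$, whence both sides vanish. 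From here on $s+1\le\mathfrak{ht}(I(G))$.

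For the left side, $I(G)=\bigcap_{C\in\mathcal{C}(G)}\mathfrak{p}_C$ and \cite[Proposition 1.4.4]{hh} give $I(G)^{(s+1)}=\bigcap_{C\in\mathcal{C}(G)}\mathfrak{p}_C^{s+1}$, and since ${\rm supp}(vu)$ is the disjoint union of ${\rm supp}(v)$ with the vertices of the matching, $vu\in I(G)^{(s+1)}$ if and only if for every $C\in\mathcal{C}(G)$ one has $|{\rm supp}(v)\cap C|+\sum_{j=1}^{s}|e_{i_j}\cap C|\ge s+1$.

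The heart of the argument is the determination of ${\rm Min}(J)$ for $J:=\big(I(G)^{\{2\}}:x_1x_2\big)$. Since $C$ always meets $e_{i_1}$, we have $\big(\mathfrak{p}_C^2:x_1x_2\big)=S$ when $x_1,x_2\in C$ and $\big(\mathfrak{p}_C^2:x_1x_2\big)=\mathfrak{p}_C$ when $|C\cap e_{i_1}|=1$, hence $\big(I(G)^{(2)}:x_1x_2\big)=\bigcap_{C\,:\,|C\cap e_{i_1}|=1}\mathfrak{p}_C$. Adding $(x_1,x_2)$, using that sum distributes over intersection for monomial ideals together with Lemma \ref{symor}, one obtains $J+(x_1,x_2)=\bigcap_{C\,:\,|C\cap e_{i_1}|=1}\mathfrak{p}_{C\cup\{x_1,x_2\}}$. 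Because the generators of $J$ avoid $x_1$ and $x_2$, this identifies ${\rm Min}(J)$ as the set of minimal elements of $\big\{\mathfrak{p}_{C\setminus e_{i_1}}:C\in\mathcal{C}(G),\ |C\cap e_{i_1}|=1\big\}$; therefore, by \cite[Proposition 1.4.4]{hh} again and after discarding redundant non-minimal terms, $J^{(s)}=\bigcap_{C\,:\,|C\cap e_{i_1}|=1}\mathfrak{p}_{C\setminus e_{i_1}}^{\,s}$. Consequently $vw\in J^{(s)}$ if and only if for every $C\in\mathcal{C}(G)$ with $|C\cap e_{i_1}|=1$ one has $|{\rm supp}(v)\cap C|+\sum_{j=2}^{s}|e_{i_j}\cap C|\ge s$, that is, adding $|C\cap e_{i_1}|=1$, if and only if $|{\rm supp}(v)\cap C|+\sum_{j=1}^{s}|e_{i_j}\cap C|\ge s+1$ for all such $C$.

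Comparing the two descriptions, the right-hand condition is precisely the left-hand condition restricted to those $C$ with $|C\cap e_{i_1}|=1$; and the two are equivalent, because a vertex cover cannot miss the edge $e_{i_1}$, while if $|C\cap e_{i_1}|=2$ then $\sum_{j=1}^{s}|e_{i_j}\cap C|\ge2+(s-1)=s+1$ makes the inequality automatic. This gives $vu\in I(G)^{(s+1)}\iff vw\in J^{(s)}$, hence the asserted equality of ideals. I expect the determination of ${\rm Min}(J)$ (equivalently of $J^{(s)}$) to be the main obstacle; everything else is bookkeeping with the vertex-cover description of symbolic powers. It is worth noting why the colon must be taken against the whole generator $u$ and not just against $e_{i_1}$: the naive identity $\big(I(G)^{\{s+1\}}:e_{i_1}\big)=\big(I(G)^{\{2\}}:e_{i_1}\big)^{\{s\}}$ fails for $s\ge2$, since the vertex covers $C$ with $|C\cap e_{i_1}|=2$ then contribute genuine extra constraints, and it is exactly the presence of the further matching edges $e_{i_2},\dots,e_{i_s}$ that renders those constraints vacuous.
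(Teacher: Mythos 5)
Your proof is correct, but it takes a genuinely different route from the paper. The paper proves the two inclusions separately by leaning on external results from \cite{s12}: Lemma 3.3 there (the analogous colon identity at the level of symbolic powers, $\big(I(G)^{(s+1)}:u\big)=\big((I(G)^{(2)}:e_{i_1})^{(s)}:e_{i_2}\cdots e_{i_s}\big)$) and Lemma 3.2 (squarefreeness of $(I(G)^{(2)}:e_{i_1})$), combined with Lemma \ref{symor} and a manipulation of the primary decomposition of $(I(G)^{(2)}:e_{i_1})$ to pass between the symbolic ideal and its squarefree part. You instead make the argument essentially self-contained: after the (correct) reduction to comparing squarefree monomials $v$ coprime to $u$, you compute an explicit primary decomposition $\big(I(G)^{\{2\}}:e_{i_1}\big)=\bigcap \mathfrak{p}_{C\setminus e_{i_1}}$ over the minimal vertex covers $C$ meeting $e_{i_1}$ in exactly one vertex (this step also uses Lemma \ref{symor}, plus the fact that the colon's generators avoid $x_1,x_2$), and then both memberships become degree counts against vertex covers; the equivalence of the two counting conditions is exactly your observation that covers with $|C\cap e_{i_1}|=2$ satisfy the bound automatically because each matching edge meets every cover. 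What your approach buys is independence from \cite[Lemmas 3.2, 3.3]{s12} (in effect you reprove their squarefree content via the vertex-cover description), at the cost of carrying out the decomposition of $(I(G)^{\{2\}}:e_{i_1})$ explicitly; the paper's proof is shorter given those citations. Two small remarks: your separate treatment of the case $I(G)^{\{s+1\}}=0$ is sound but unnecessary, since your monomial-by-monomial comparison is valid verbatim when either ideal is zero; and the passage from $\bigcap_{\mathfrak{p}\in{\rm Min}(J)}\mathfrak{p}^s$ to the possibly redundant intersection $\bigcap_{C}\mathfrak{p}_{C\setminus e_{i_1}}^s$ deserves the one-line justification that every $\mathfrak{p}_{C\setminus e_{i_1}}$ contains some minimal prime of $J$, which you implicitly use when "discarding redundant terms."
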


\begin{proof}
There is nothing to prove for $s=1$. So suppose that $s\geq 2$.

Let $v$ be a minimal monomial generator of $\bigg(\big(I(G)^{\{2\}}:e_{i_1}\big)^{\{s\}}: e_{i_2}\ldots e_{i_s}\bigg)$. Then $v$ is a squarefree monomial which is not divisible by the variables in $\cup_{j=1}^se_{i_j}$. It follows from \cite[Lemma 3.3]{s12} that $v$ belongs to $\big(I(G)^{(s+1)}:u\big)$. Hence, $vu\in I(G)^{(s+1)}$. As $uv$ is a squarefree monomial, we conclude that $uv\in I(G)^{\{s+1\}}$. In other words $v\in (I(G)^{\{s+1\}}:u)$.

Next, we prove the reverse inclusion. Without loss of generality, we may suppose that $e_{i_1}=x_1x_2$. By \cite[Lemma 3.2]{s12}, we know that $(I(G)^{(2)}:e_{i_1})$ is a squarefree monomial ideal. Let $(I(G)^{(2)}:e_{i_1})=\bigcap_{k=1}^m\mathfrak{p}_k$ be the irredundant primary decomposition of $(I(G)^{(2)}:e_{i_1})$. In particular, the ideals $\mathfrak{p}_1, \ldots, \mathfrak{p}_m$ are generated by subsets of variables. Then
\begin{align*}
& \big(I(G)^{(2)}:e_{i_1}\big)^{\{s\}}+(x_1, x_2)=\bigcap_{k=1}^m\mathfrak{p}_k^{\{s\}}+(x_1, x_2) =\bigcap_{k=1}^m(\mathfrak{p}_k+(x_1, x_2))^{\{s\}}+(x_1, x_2)\\ & =J^{\{s\}}+(x_1, x_2),
\end{align*}
where $J=(I(G)^{(2)}:e_{i_1})+(x_1, x_2)$. On the other hand, it follows from Lemma \ref{symor} that $J=(I(G)^{\{2\}}:e_{i_1})+(x_1, x_2)$. Therefore, we conclude from the above equalities that
\[
\begin{array}{rl}
& \big(I(G)^{(2)}:e_{i_1}\big)^{\{s\}}+(x_1, x_2)=\big((I(G)^{\{2\}}:e_{i_1})+(x_1, x_2)\big)^{\{s\}}+(x_1, x_2)=\\ & \big((I(G)^{\{2\}}:e_{i_1})\big)^{\{s\}}+(x_1, x_2).
\end{array} \tag{22} \label{28}
\]

Let $w$ be a minimal monomial generator of the ideal $(I(G)^{\{s+1\}}:u)$. Then $w$ is a squarefree monomial which is not divisible by the variables in $\cup_{j=1}^se_{i_j}$. We deduce from \cite[Lemma 3.3]{s12} that $w$ belongs to the ideal $\bigg(\big(I(G)^{(2)}:e_{i_1}\big)^{(s)}: e_{i_2}\ldots e_{i_s}\bigg)$. As $we_{i_2}\ldots e_{i_s}$ is a squarefree monomial, we conclude that$$we_{i_2}\ldots e_{i_s}\in \big(I(G)^{(2)}:e_{i_1}\big)^{\{s\}}.$$Since $we_{i_2}\ldots e_{i_s}$ is not divisible by $x_1$ and $x_2$, we deduce from (\ref{28}) that$$we_{i_2}\ldots e_{i_s}\in \big(I(G)^{\{2\}}:e_{i_1}\big)^{\{s\}}.$$This yields that$$w\in \bigg(\big(I(G)^{\{2\}}:e_{i_1}\big)^{\{s\}}: e_{i_2}\ldots e_{i_s}\bigg)$$and we are done.
\end{proof}

Let $G$ be a graph and assume that $u=e_1e_2$ is a minimal monomial generator of $I(G)^{[2]}$. We know from Lemma \ref{colsy} that $(I(G)^{\{3\}}:u)=\big((I(G)^{\{2\}}:e_1)^{\{2\}}: e_2\big)$. Therefore, to bound the regularity of $(I(G)^{\{3\}}:u)$, one needs to study the ideals of the form $(I(G)^{\{2\}}:e)$ where $e$ is an edge of $G$. In the following lemma, we determine the structure of this type of ideals.

\begin{lem} \label{seccoldesc}
Let $G$ be a graph and let $e=x_ix_j$ be an edge of $G$. Then
\begin{align*}
& \big(I(G)^{\{2\}}:x_ix_j\big)=I(G-\{x_i, x_j\})+\big(x_px_q: x_p\in N_{G-x_j}(x_i), x_q\in N_{G-x_i}(x_j), x_p\neq x_q\big)\\ & +\big(x_t: x_t\in N_G(x_i)\cap N_G(x_j)\big).
\end{align*}
\end{lem}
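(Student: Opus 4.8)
The plan is to prove the set equality by showing containment in both directions, working entirely at the level of squarefree monomials. Write $J$ for the ideal on the right-hand side. First I would verify the inclusion $J\subseteq (I(G)^{\{2\}}:x_ix_j)$ by checking each of the three families of generators. For a generator $x_px_q$ of $I(G-\{x_i,x_j\})$ (so $x_px_q\in E(G)$ with $x_p,x_q\notin\{x_i,x_j\}$), the monomial $x_ix_jx_px_q$ is a squarefree element of $I(G)^{[2]}\subseteq I(G)^{(2)}$, hence lies in $I(G)^{\{2\}}$; thus $x_px_q\in(I(G)^{\{2\}}:x_ix_j)$. For a generator $x_px_q$ with $x_p\in N_{G-x_j}(x_i)$ and $x_q\in N_{G-x_i}(x_j)$, the edges $x_ix_p$ and $x_jx_q$ give $x_ix_jx_px_q = (x_ix_p)(x_jx_q)\in I(G)^{[2]}$, squarefree since $x_p\ne x_q$ and neither equals $x_i$ or $x_j$; again this puts $x_px_q$ in the colon ideal. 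For a generator $x_t$ with $x_t\in N_G(x_i)\cap N_G(x_j)$: here I use Lemma~\ref{intsec} (with $s=1$), which gives $(I(G)^{\{2\}}:x_ix_j)=(I(G-x_j)^{\{1\}}:x_i)\cap(I(G-x_i)^{\{1\}}:x_j)=(I(G-x_j):x_i)\cap(I(G-x_i):x_j)$; since $x_t$ is a neighbor of $x_i$ in $G-x_j$ and of $x_j$ in $G-x_i$, it lies in both colon ideals.

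For the reverse inclusion $(I(G)^{\{2\}}:x_ix_j)\subseteq J$, the most efficient route is again Lemma~\ref{intsec}: it reduces the problem to describing $(I(G-x_j):x_i)\cap(I(G-x_i):x_j)$. Now $(I(G-x_j):x_i)$ is a well-understood object — colon of an edge ideal by a variable — namely $I\big((G-x_j)-N_{G-x_j}[x_i]\big)+\big(x_p: x_p\in N_{G-x_j}(x_i)\big)$, and symmetrically for $(I(G-x_i):x_j)$. So I would take a minimal squarefree monomial generator $v$ of the intersection (necessarily not divisible by $x_i$ or $x_j$, since $x_ix_jv$ must be squarefree) and do a small case analysis according to how $v$ interacts with the neighborhoods $N_{G-x_j}(x_i)$ and $N_{G-x_i}(x_j)$. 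If $v$ is divisible by some $x_t\in N_G(x_i)\cap N_G(x_j)$ we land in the third family. Otherwise, $v\in(I(G-x_j):x_i)$ forces $v$ to be divisible either by some $x_p\in N_{G-x_j}(x_i)\setminus N_G(x_j)$ or by an edge of $(G-x_j)-N_{G-x_j}[x_i]$, and similarly for the other factor; combining the two, $v$ is either divisible by an edge entirely inside $G-\{x_i,x_j\}$ (first family) or by a product $x_px_q$ with $x_p$ a private neighbor of $x_i$ and $x_q$ a private neighbor of $x_j$ (second family, using $x_p\ne x_q$ because $x_p\notin N_G(x_j)$). In each case $v$ is a multiple of a generator of $J$.

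Alternatively, one can bypass Lemma~\ref{intsec} and argue directly: a squarefree monomial $v$ with $x_iv, x_jv\notin\{x_i,x_j\}$-divisible lies in $(I(G)^{\{2\}}:x_ix_j)$ iff $x_ix_jv\in\mathfrak{p}_C^2$ for every minimal vertex cover $C$ of $G$. Testing this against covers $C$ that contain exactly one of $x_i,x_j$ (such covers exist as soon as $x_ix_j$ lies in no forced structure, and in general one handles the vertices of $N_G(x_i)\cap N_G(x_j)$ separately) reproduces the three cases above. I expect the main obstacle to be the bookkeeping in the reverse inclusion: making sure the case split on the support of $v$ relative to $N_{G-x_j}(x_i)$, $N_{G-x_i}(x_j)$, and their intersection is exhaustive and that the condition $x_p\ne x_q$ in the second family is correctly tracked — in particular that a neighbor lying in $N_G(x_i)\cap N_G(x_j)$ is absorbed into the third family rather than spuriously contributing to the second. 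Once the neighborhoods are partitioned cleanly (common neighbors versus private neighbors of each endpoint versus the rest), the rest is routine.
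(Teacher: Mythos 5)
Your proof is correct, but it takes a genuinely different route from the paper. The paper's own argument is essentially a two-step citation: by Lemma \ref{symor} the ideals $\big(I(G)^{\{2\}}:x_ix_j\big)$ and $\big(I(G)^{(2)}:x_ix_j\big)$ agree after adding $(x_i,x_j)$, and the latter colon is described explicitly in \cite[Lemma 3.2]{s12} as $I(G)+\big(x_px_q: x_p\in N_G(x_i),\, x_q\in N_G(x_j),\, x_p\neq x_q\big)+\big(x_t: x_t\in N_G(x_i)\cap N_G(x_j)\big)$; one then adds $(x_i,x_j)$ and strips those two variables, using that the minimal generators of the colon are not divisible by $x_i$ or $x_j$. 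You instead apply Lemma \ref{intsec} with $s=1$ to rewrite the colon as $\big(I(G-x_j):x_i\big)\cap\big(I(G-x_i):x_j\big)$, expand each factor by the standard colon-by-a-vertex formula, and classify a squarefree generator of the intersection according to common neighbors, private neighbors, and edges of $G-\{x_i,x_j\}$; your case split is exhaustive, and you correctly note that $x_p\neq x_q$ because a private neighbor of $x_i$ cannot lie in $N_G(x_j)$. The trade-off: the paper's proof is shorter but leans on the external description of the symbolic colon in \cite{s12} plus Lemma \ref{symor}, whereas yours is self-contained modulo Lemma \ref{intsec} and elementary edge-ideal facts. Two small caveats: Lemma \ref{intsec} is stated under $1\leq s\leq \mathfrak{ht}(I(G))-1$, so to cover $\mathfrak{ht}(I(G))=1$ you should note separately that then $G$ is a star and both sides of the asserted equality vanish (or observe that the proof of Lemma \ref{intsec} is unaffected in that degenerate case); and your alternative sketch via testing against minimal vertex covers is too loosely stated to stand on its own, so the intersection argument should be regarded as the actual proof.
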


\begin{proof}
Using Lemma \ref{symor} and \cite[Lemma 3.2]{s12}, we have
\begin{align*}
& \big(I(G)^{\{2\}}:x_ix_j\big)+(x_i, x_j)=\big(I(G)^{(2)}:x_ix_j\big)+(x_i, x_j)\\ & =I(G)+\big(x_px_q: x_p\in N_G(x_i), x_q\in N_G(x_j), x_p\neq x_q\big)\\ & +\big(x_t: x_t\in N_G(x_i)\cap N_G(x_j)\big)+(x_i, x_j)\\ & =I(G-\{x_i, x_j\})+\big(x_px_q: x_p\in N_{G-x_j}(x_i), x_q\in N_{G-x_i}(x_j), x_p\neq x_q\big)\\ & +\big(x_t: x_t\in N_G(x_i)\cap N_G(x_j)\big)+(x_i, x_j).
\end{align*}
Since the minimal monomial generators of $(I(G)^{\{2\}}:x_ix_j)$ are not divisible by $x_i$ and $x_j$, the assertion follows from the above equalities.
\end{proof}

In the following lemma, we determine a combinatorial upper bound for the regularity of $(I(G)^{\{3\}}:u)$ when $u$ is a minimal monomial generator of $I(G)^{[2]}$.

\begin{lem} \label{lemreg}
Assume that $G$ is a graph with edge set $E(G)=\{e_1, \ldots, e_r\}$ and ${\rm match}(G)\geq 2$. Suppose $u=e_{i_1}e_{i_2}$ is a minimal monomial generator of $I(G)^{[2]}$. If $I(G)^{\{3\}}\neq 0$, then$${\rm reg}\big(I(G)^{\{3\}}:u\big)\leq \frac{|V(G)|}{2}-1.$$
\end{lem}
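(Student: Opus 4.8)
The goal is to bound $\mathrm{reg}\big(I(G)^{\{3\}}:u\big)$ where $u = e_{i_1}e_{i_2}$ is a minimal generator of $I(G)^{[2]}$, i.e., $\{e_{i_1},e_{i_2}\}$ is a matching of $G$. By Lemma \ref{colsy} (applied with $s=2$), we have
\[
\big(I(G)^{\{3\}}:u\big) = \Big(\big(I(G)^{\{2\}}:e_{i_1}\big)^{\{2\}} : e_{i_2}\Big).
\]
Write $e_{i_1}=x_ax_b$ and $e_{i_2}=x_cx_d$ (four distinct vertices, since $\{e_{i_1},e_{i_2}\}$ is a matching). By Lemma \ref{seccoldesc}, the ideal $J := \big(I(G)^{\{2\}}:e_{i_1}\big)$ is the edge ideal of a graph $G'$ on the vertex set $V(G)\setminus\{x_a,x_b\}$, \emph{plus} some extra squarefree monomial generators of degree at most $2$ (the ``cross edges'' $x_px_q$ with $x_p\in N_{G-x_b}(x_a)$, $x_q\in N_{G-x_a}(x_b)$, together with the linear terms $x_t$ for $t\in N_G(x_a)\cap N_G(x_b)$). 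Since the quadratic part of $J$ is again an edge ideal (of a graph $G''$ obtained from $G-\{x_a,x_b\}$ by adding the cross edges and deleting the common-neighbor vertices), and the degree-one generators only help, the first step is to reduce to understanding $\mathrm{reg}\big((J)^{\{2\}}:e_{i_2}\big)$ via the $(I(H)^{\{2\}}:f)$ machinery already developed — concretely, applying Lemma \ref{intsec} / Lemma \ref{seccoldesc} once more with $s=1$ to peel off $e_{i_2}$.

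\smallskip

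\noindent\textbf{Key steps in order.} First I would use Lemma \ref{colsy} to rewrite $\big(I(G)^{\{3\}}:u\big)$ as a colon of a squarefree part of a second symbolic power, as above. Second, I would record that $I(G)^{\{2\}}:e_{i_1}$, being (by Lemma \ref{seccoldesc}) a sum of an edge ideal and generators of degree $\le 2$, is itself of the form $I(H)+(\text{some variables})+(\text{some extra edges})$; after inverting/killing the linear variables one is left with an honest edge ideal $I(H')$ on a vertex set of size $|V(G)|-2-|N_G(x_a)\cap N_G(x_b)| \le |V(G)|-2$, and passing to the colon by the isolated-in-$H'$ structure of $e_{i_2}=x_cx_d$ does not increase regularity (Lemma 2.10 of \cite{b}, which absorbs variable generators in colons). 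Third — the quantitative core — I would apply Lemma \ref{seccoldesc} \emph{inside} $H'$ to describe $\big(I(H')^{\{2\}}:x_cx_d\big)$ again as an edge ideal $I(H'')$ plus variable/extra-edge generators, where now $V(H'')\subseteq V(G)\setminus\{x_a,x_b,\,\text{common neighbours},\ldots\}$ has at most $|V(G)|-2$ vertices and, crucially, at least $2$ further vertices have been removed (the neighbours realizing that $x_c,x_d$ have degree $\ge 1$ in $H'$ — one uses here that $\mathrm{match}(G)\ge 2$, which is what forces the required extra deletions to exist). Fourth, I would invoke the linear bound $\mathrm{reg}(I(H''))\le \mathrm{match}(H'')+1 \le \lfloor |V(H'')|/2\rfloor + 1$ (\cite[Theorem 6.7]{hv} combined with $\mathrm{match}(H'')\le \lfloor |V(H'')|/2\rfloor$), and a short-exact-sequence / $I_1\cap I_2$ decomposition exactly as in Case 3 of Lemma \ref{colonsym2} to handle the extra cross-edge generators without loss. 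Chasing constants, the vertex count drops by enough (at least $4$ effective deletions, or $2$ deletions plus the $-1$ coming from the matching-number estimate) to land at $\mathrm{reg}\le \tfrac{|V(G)|}{2}-1$.

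\smallskip

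\noindent\textbf{Main obstacle.} The genuinely delicate point is the bookkeeping in the third step: after two rounds of Lemma \ref{seccoldesc}, the ideal $\big(I(G)^{\{3\}}:u\big)$ is not a pure edge ideal but an edge ideal perturbed by the ``cross-edge'' generators $x_px_q$, and I must argue that these perturbations (a) do not spoil the inductive $I_1\cap I_2$ short-exact-sequence argument, and (b) still leave a matching of size at most $\lfloor |V(G)|/2\rfloor - 2$ in the relevant induced subgraph — equivalently, that enough vertices among $N_G(x_a),N_G(x_b),N_G(x_c),N_G(x_d)$ get genuinely removed from the support. The hypothesis $\mathrm{match}(G)\ge 2$ is exactly what guarantees the four vertices $x_a,x_b,x_c,x_d$ plus enough of their neighbours are available; making precise which four-or-more vertices are excised, uniformly over the case analysis (pendant edges, pendant triangles, and ``generic'' edges, mirroring Cases 1–3 of Lemma \ref{colonsym2}), and checking the constant comes out as $-1$ rather than $0$, is where the real work lies. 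Everything else is an application of the already-proven Lemmata \ref{colsy}, \ref{symor}, \ref{seccoldesc} and \cite[Theorem 6.7]{hv}, \cite[Lemma 2.10]{b}, \cite[Corollary 18.7]{p'}.
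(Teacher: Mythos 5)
Your opening moves coincide with the paper's: apply Lemma \ref{colsy} with $s=2$ and then Lemma \ref{seccoldesc} to write $\big(I(G)^{\{2\}}:e_{i_1}\big)=I(G')+(A)$, where $A$ is the set of common neighbours of the endpoints of $e_{i_1}$ and $G'$ carries the cross edges. From there, however, the proposal has a genuine gap: you assume throughout that $e_{i_2}=x_cx_d$ survives as an edge of the reduced graph with both endpoints present and of positive degree, and you never treat the case where $x_c$ or $x_d$ lies in $A$, i.e.\ is a common neighbour of the endpoints of $e_{i_1}$ (this is perfectly compatible with $\{e_{i_1},e_{i_2}\}$ being a matching). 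In that case the variable in question is a \emph{generator} of the ideal whose squarefree symbolic square is being coloned; coloning by it collapses $\big(I(G'\setminus A),A\big)^{\{2\}}$ down to $\big(I(G'\setminus A),A\setminus\{x_c\}\big)$, and the problem becomes an ordinary edge-ideal colon $\big(I(G'\setminus A),A\setminus\{x_c\}\big):x_d$, which needs its own vertex count (here $x_a,x_b,x_c,x_d$ and $N[x_4]$-type neighbours disappear, and then ${\rm reg}\leq{\rm match}+1$ gives the bound). This is exactly the paper's Case 2, and your blanket claim that the linear generators can be ``killed'' before taking the second symbolic square is not justified and is precisely what fails there.

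In the remaining case ($A\cap\{x_c,x_d\}=\emptyset$) your accounting is also off, and the step you yourself flag as ``where the real work lies'' is left open. One does not need four effective deletions, nor a re-run of the $I_1\cap I_2$ argument of Lemma \ref{colonsym2}: the paper observes that $e_{i_2}\in I(G'\setminus A)$ forces $(A)\subseteq\big(\big(I(G'\setminus A),A\big)^{\{2\}}:e_{i_2}\big)$, discards $A$, and then simply \emph{cites} Lemma \ref{colonsym2}, whose bound is ${\rm match}(G'\setminus A)$ with no ``$+1$''; hence ${\rm match}(G'\setminus A)\leq |V(G')|/2\leq |V(G)|/2-1$ already follows from the two deleted vertices $x_a,x_b$. (One must also dispose of the degenerate subcase $I(G'\setminus A)^{\{2\}}=0$, which you do not mention, and note that ${\rm match}(G)\geq 2$ only guarantees that $u$ exists, not the extra deletions you attribute to it.) So while the first two reductions match the paper, the proposal neither closes the main estimate nor contains the case split on which the proof actually turns.
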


\begin{proof}
Without loss of generality, we may assume that $e_{i_1}=x_1x_2$ and $e_{i_2}=x_3x_4$. Let $G'$ be the graph with edge ideal$$I(G')=I(G-\{x_1, x_2\})+\big(x_px_q: x_p\in N_{G-x_2}(x_1), x_q\in N_{G-x_1}(x_2), x_p\neq x_q\big).$$In particular, $|V(G')\leq |V(G)|-2$. Using Lemma \ref{seccoldesc}, there exists a subset $A$ of variables with the property that$$\big(I(G)^{\{2\}}:e_{i_1}\big)=I(G')+(A).$$We know from Lemma \ref{colsy} that
\begin{align*} \tag{23} \label{29}
& {\rm reg}\big(I(G)^{\{3\}}:u\big)={\rm reg}\big(\big(I(G)^{\{2\}}:e_{i_1}\big)^{\{2\}}: e_{i_2}\big) ={\rm reg}\big(\big(I(G'), A\big)^{\{2\}}: e_{i_2}\big)\\ & ={\rm reg}\big(\big(I(G'\setminus A), A\big)^{\{2\}}: e_{i_2}\big),
\end{align*}
Where the last equality is a consequence of Lemma \ref{del}. We divide the rest of the proof into two cases.

\vspace{0.3cm}
{\bf Case 1.} Suppose $A\cap \{x_3, x_4\}=\emptyset$. Therefore, $e_{i_2}$ is an edge of $G\setminus (A\cup\{x_1, x_2\})$. Thus, $e_{i_2}$ belongs to $I(G'\setminus A)$. Hence, for every variable $z\in A$, we have$$ze_{i_2}\in \big(I(G'\setminus A), A\big)^{\{2\}}.$$In other words,$$z\in \big(\big(I(G'\setminus A), A\big)^{\{2\}}: e_{i_2}\big).$$Therefore,$$\big(\big(I(G'\setminus A)\big)^{\{2\}}: e_{i_2}\big)+(A)\subseteq \big(\big(I(G'\setminus A), A\big)^{\{2\}}: e_{i_2}\big).$$

Conversely, let $v$ be a monomial in $\big(\big(I(G'\setminus A), A\big)^{\{2\}}: e_{i_2}\big)$ and suppose that $v$ is not divisible by any variable in $A$. As $e_{i_2}$ has no common vertex with $A$, we deduce that $ve_{i_2}\in I(G'\setminus A)^{\{2\}}$. Consequently,
\[
\begin{array}{rl}
\big(\big(I(G'\setminus A), A\big)^{\{2\}}: e_{i_2}\big)=\big(\big(I(G'\setminus A)\big)^{\{2\}}: e_{i_2}\big)+(A).
\end{array} \tag{24} \label{30}
\]
If $I(G'\setminus A)\big)^{\{2\}}$ is the zero ideal, then we conclude from the above equality that $\big(\big(I(G'\setminus A), A\big)^{\{2\}}: e_{i_2}\big)=(A)$ which is generated by a (possibly empty) subset of variable. So, we conclude from (\ref{29}) an the above equality that$${\rm reg}\big(I(G)^{\{3\}}:u\big)={\rm reg}(A)\leq 1\leq \frac{|V(G)|}{2}-1.$$So, suppose that $I(G'\setminus A)\big)^{\{2\}}\neq 0$. In particular, $\mathfrak{ht}(I(G'\setminus A)\geq 2$. It follows from equalities (\ref{29}) and (\ref{30}) that$${\rm reg}\big(I(G)^{\{3\}}:u\big)={\rm reg}\big(\big(I(G'\setminus A)\big)^{\{2\}}: e_{i_2}\big)\leq {\rm match}(G'\setminus A)\leq \frac{|V(G')|}{2}\leq \frac{|V(G)|}{2}-1,$$where the first inequality is known by Lemma \ref{colonsym2} and the last inequality follows from the fact that $|V(G')|\leq |V(G)|-2$.

\vspace{0.3cm}
{\bf Case 2.} Assume that $A\cap \{x_3, x_4\}\neq\emptyset$. In particular, $|A|\geq 1$. Without loss of generality, suppose $x_3\in A$ and set $B:=A\setminus \{x_3\}$. Then by (\ref{29}) we have
\begin{align*} \tag{25} \label{31}
& {\rm reg}\big(I(G)^{\{3\}}:u\big)={\rm reg}\big(\big(I(G'\setminus A), A\big)^{\{2\}}: e_{i_2}\big)={\rm reg}\big(\big(\big(I(G'\setminus A), A\big)^{\{2\}}: x_3\big):x_4\big)\\ & ={\rm reg}\big(\big(I(G'\setminus A), B\big):x_4\big),
\end{align*}
where the last equality follows from the assumption that $x_3\in A$. If $x_4\in B$, then $\big(I(G'\setminus A), B\big):x_4\big)=S$ and the assertion would be trivial. So, assume that $x_4\notin B$. It follows that$$\big(I(G'\setminus A), B\big):x_4\big)=I\big(G'\setminus (A\cup N_{G-A}[x_4])\big)+({\rm some \ vriables}).$$In particular,
\[
\begin{array}{rl}
{\rm reg}\big(\big(I(G'\setminus A), B\big):x_4\big)={\rm reg}\big(I\big(G'\setminus (A\cup N_{G-A}[x_4])\big)\big).
\end{array} \tag{26} \label{32}
\]
Note that $x_1, x_2, x_3$ and $x_4$ are not vertices of $G'\setminus (A\cup N_{G-A}[x_4])$. Therefore, using equalities (\ref{31}), (\ref{32}) and \cite[Theorem 6.7]{hv}, we have
\begin{align*}
& {\rm reg}\big(I(G)^{\{3\}}:u\big)\leq {\rm match}\big(G'\setminus (A\cup N_{G-A}[x_4])\big)+1\leq \frac{|V(G'\setminus (A\cup N_{G-A}[x_4]))|}{2}+1\\ & \leq\frac{|V(G)|-4}{2}+1=\frac{|V(G)|}{2}-1.
\end{align*}
\end{proof}

According to Lemma \ref{rfirst}, for bounding the regularity of $I(G)^{\{3\}}$, one needs to estimate the regularity of $I(G)^{\{3\}}+I(G)^{[2]}$. The following lemma shows that this ideal is nothing other that $I(G)^{[2]}$.

\begin{lem} \label{ttsym}
For any graph $G$, we have $I(G)^{\{3\}}\subseteq I(G)^{[2]}$.
\end{lem}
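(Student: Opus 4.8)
The plan is to show that every minimal monomial generator $w$ of $I(G)^{\{3\}}$ lies in $I(G)^{[2]}$. By definition, such a $w$ is a squarefree monomial belonging to $I(G)^{(3)} = \bigcap_{C \in \mathcal{C}(G)} \mathfrak{p}_C^3$. The goal is to produce two disjoint edges $e, e'$ of $G$ with $ee' \mid w$, since then $w \in I(G)^{[2]}$ by the description of $I(G)^{[2]}$ as the ideal generated by products of matchings. Let $U = \mathrm{supp}(w) \subseteq V(G)$ be the set of variables dividing $w$; since $w$ is squarefree, $w = \mathrm{x}_U$.

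First I would record the key consequence of $w \in \mathfrak{p}_C^3$ for every minimal vertex cover $C$: since $\mathrm{x}_U \in \mathfrak{p}_C^3$ and $\mathrm{x}_U$ is squarefree, this forces $|U \cap C| \geq 3$ for every $C \in \mathcal{C}(G)$. Equivalently, no minimal vertex cover of $G$ meets $U$ in fewer than three vertices. I would then pass to the induced subgraph $H := G[U]$ on the vertex set $U$. The claim I want is that $H$ contains a matching of size at least $2$, i.e. two disjoint edges; then those two edges give the required factorization $ee' \mid \mathrm{x}_U = w$. Suppose for contradiction that $\mathrm{match}(H) \leq 1$. A graph with matching number at most $1$ is, by the Gallai identity / a direct argument, either edgeless or a ``star-like'' configuration — more precisely its edge set is covered by a single vertex, i.e. $H$ has a vertex cover of size at most $1$ (if $H$ has at least one edge and $\mathrm{match}(H) = 1$, then all edges share a common vertex, so a single vertex covers $E(H)$; if $H$ is edgeless, the empty set covers $E(H)$).

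The main step is to convert a small vertex cover of $H = G[U]$ into a small-on-$U$ minimal vertex cover of $G$, contradicting $|U \cap C| \geq 3$. Let $D \subseteq U$ be a vertex cover of $H$ with $|D| \leq 1$. Consider $C_0 := D \cup (V(G) \setminus U)$. Every edge of $G$ is either contained in $U$ — hence covered by $D$ — or has an endpoint outside $U$ — hence covered by $V(G)\setminus U$; so $C_0$ is a vertex cover of $G$. Extend/shrink $C_0$ to a minimal vertex cover $C \subseteq C_0$ of $G$. Then $C \cap U \subseteq C_0 \cap U = D$, so $|C \cap U| \leq |D| \leq 1 < 3$, contradicting the consequence of $w \in \mathfrak{p}_C^3$ established above. (Here one should also note $I(G)^{\{3\}} \neq 0$ is not needed — if $I(G)^{\{3\}} = 0$ the inclusion is trivial — but when it is nonzero, $\mathfrak{ht}(I(G)) \geq 3$ by Proposition \ref{zero}, which is consistent with every minimal vertex cover having size $\geq 3$.) Hence $\mathrm{match}(H) \geq 2$, yielding two disjoint edges of $G$ dividing $w$, so $w \in I(G)^{[2]}$.

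I expect the only genuinely delicate point to be the combinatorial fact that $\mathrm{match}(H) \leq 1$ forces $E(H)$ to be covered by a single vertex; this is elementary (two disjoint edges would give a matching of size $2$), but it must be stated cleanly, including the edgeless case. Everything else — translating ``$\mathrm{x}_U \in \mathfrak{p}_C^3$'' into ``$|U \cap C| \geq 3$'', and building the minimal vertex cover $C$ with $C \cap U$ small — is routine bookkeeping with vertex covers. So the proof is short: extract $U = \mathrm{supp}(w)$, observe $H = G[U]$ has matching number $\geq 2$ by the vertex-cover argument, and conclude.
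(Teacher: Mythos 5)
Your route is genuinely different from the paper's: the paper disposes of this lemma in one line by citing \cite[Proposition 3.7]{s12}, whereas you give a self-contained combinatorial argument. Most of it is correct: writing a squarefree generator $w=\mathrm{x}_U$ of $I(G)^{\{3\}}$, translating $\mathrm{x}_U\in\mathfrak{p}_C^3$ into $|U\cap C|\geq 3$ for every minimal vertex cover $C$, and producing a minimal vertex cover $C\subseteq D\cup(V(G)\setminus U)$ with $C\cap U\subseteq D$ from any vertex cover $D$ of $H=G[U]$ are all fine, and two disjoint edges inside $U$ do indeed force $w\in I(G)^{[2]}$.

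The one step you flagged as delicate is, however, stated incorrectly: it is not true that $\mathrm{match}(H)=1$ forces all edges of $H$ to pass through a single vertex. The triangle $K_3$ has matching number $1$ but no vertex cover of size $1$; in general a graph with matching number $1$ is a star or a triangle together with isolated vertices. So your choice of $D$ with $|D|\leq 1$ is not always available. Fortunately the gap is harmless for your purpose, because you only need $|C\cap U|<3$: if $uv$ is an edge of a maximum matching of $H$ and $\mathrm{match}(H)\leq 1$, then every edge of $H$ meets $\{u,v\}$ (an edge disjoint from $uv$ would give a matching of size $2$), so $H$ always admits a vertex cover $D$ with $|D|\leq 2\,\mathrm{match}(H)\leq 2$, and then $|C\cap U|\leq 2<3$ still contradicts $\mathrm{x}_U\in\mathfrak{p}_C^3$. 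With that correction (cover of size at most $2$ instead of at most $1$) your proof is complete, and it has the merit of avoiding the external reference the paper relies on.
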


\begin{proof}
The claim is an immediate consequence of \cite[Proposition 3.7]{s12}.
\end{proof}

We are now ready to prove the main result of this section.

\begin{thm} \label{mainpo3}
Let $G$ be a graph with $n$ vertices such that $\mathfrak{ht}(I(G))\geq 3$. Then$${\rm reg}(I(G)^{\{3\}})\leq\min\Big\{\Big\lfloor\frac{n}{2}\Big\rfloor+3, {\rm reg}(I(G))+4\Big\}.$$
\end{thm}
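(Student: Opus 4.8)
The plan is to prove the two bounds separately, since both follow from Lemma \ref{rfirst} together with the auxiliary lemmas established in this section. Throughout I would apply Lemma \ref{rfirst} with $s=2$: writing $G(I(G)^{[2]})=\{u_1,\ldots,u_m\}$, this gives
\[
{\rm reg}(I(G)^{\{3\}})\leq \max\Big\{{\rm reg}\big(I(G)^{\{3\}}:u_i\big)+4,\ 1\leq i\leq m,\ {\rm reg}\big(I(G)^{\{3\}}+I(G)^{[2]}\big)\Big\}.
\]
By Lemma \ref{ttsym} we have $I(G)^{\{3\}}\subseteq I(G)^{[2]}$, so the last term simplifies to ${\rm reg}(I(G)^{[2]})$. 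To make Lemma \ref{rfirst} applicable we need $s=2\leq {\rm match}(G)$, which follows from $\mathfrak{ht}(I(G))\geq 3$ (a minimal vertex cover of size $\geq 3$ forces a matching of size $\geq 2$; more carefully, ${\rm match}(G)\geq {\rm match}(G)$ via König-type reasoning, but in any case $\mathfrak{ht}\geq 3$ already gives ${\rm match}(G)\geq 2$ since otherwise $G$ would be a star and have height one).

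For the bound ${\rm reg}(I(G)^{\{3\}})\leq {\rm reg}(I(G))+4$: I would bound each term on the right of the displayed inequality by ${\rm reg}(I(G))+4$. For the colon ideals, Lemma \ref{colsy} identifies $(I(G)^{\{3\}}:u_i)$ with a colon of the squarefree part of a second symbolic power of a smaller ideal; one then wants to feed this into Corollary \ref{twth}(ii) and Proposition \ref{compsq} to get a bound of the form ${\rm reg}(\cdot)+2$ over the relevant subgraph, and then relate ${\rm reg}$ of that subgraph's edge ideal to ${\rm reg}(I(G))$. The cleaner route is probably to use Lemma \ref{seccoldesc} to see $(I(G)^{\{2\}}:e_{i_1})=I(G')+(A)$ for a graph $G'$ with $I(G')\supseteq I(G-\{x_1,x_2\})$ built by adding certain edges, bound ${\rm reg}$ of the second symbolic power of $I(G')$ by ${\rm reg}(I(G'))+2$ via Corollary \ref{twth}, then strip off the extra variable $e_{i_2}$ and the variables in $A$ using \cite[Lemma 2.10]{b} at a cost of at most $2$ more, landing at ${\rm reg}(I(G'))+4$; finally one argues ${\rm reg}(I(G'))\leq {\rm reg}(I(G))$, which should follow because $G'$ is obtained from an induced subgraph of $G$ by adding edges among a neighborhood, a configuration already controlled by Proposition \ref{ind}-type restriction arguments (or by a direct colon/deletion induction on $G$). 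For the term ${\rm reg}(I(G)^{[2]})$, Corollary \ref{twth}(i) gives ${\rm reg}(I(G)^{[2]})\leq {\rm reg}(I(G))+2\leq {\rm reg}(I(G))+4$.

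For the bound ${\rm reg}(I(G)^{\{3\}})\leq \lfloor n/2\rfloor+3$: the colon terms are handled by Lemma \ref{lemreg}, which gives ${\rm reg}(I(G)^{\{3\}}:u_i)\leq |V(G)|/2-1=n/2-1$, hence ${\rm reg}(I(G)^{\{3\}}:u_i)+4\leq n/2+3$, and since the left side is an integer this is at most $\lfloor n/2\rfloor+3$ — I should be slightly careful here and note that ${\rm reg}(I(G)^{\{3\}}:u_i)\leq \lfloor n/2\rfloor-1$ when $n$ is odd, which Lemma \ref{lemreg} already yields since its bound $|V(G)|/2-1$ for an integer quantity rounds down. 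For the remaining term ${\rm reg}(I(G)^{[2]})$: by Corollary \ref{twth}(i) combined with \cite[Theorem 6.7]{hv} we get ${\rm reg}(I(G)^{[2]})\leq {\rm reg}(I(G))+2\leq {\rm match}(G)+3\leq \lfloor n/2\rfloor+3$, using ${\rm match}(G)\leq \lfloor n/2\rfloor$. Combining the two bounds gives the $\min$.

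The main obstacle I anticipate is the inequality ${\rm reg}(I(G'))\leq {\rm reg}(I(G))$ needed in the first half, because $G'$ is not literally an induced subgraph of $G$ — it is an induced subgraph with extra "bridging" edges added among the neighbors of $x_1$ and $x_2$. One must check that adding exactly these edges does not increase the regularity of the edge ideal; this is plausible since those edges only create additional clique-like structure that tends to decrease regularity, but it requires a careful argument, perhaps by realizing $I(G')$ as a colon $(I(G-\{x_1,x_2\}):\text{something})$ up to extra variables and invoking \cite[Lemma 4.2]{s3} or the restriction lemma, rather than a naive monotonicity statement. If this turns out to be delicate, an alternative is to run an induction on $n$ for the whole theorem, peeling off a vertex via \cite[Lemma 2.10]{dhs} exactly as in the proofs of Theorems \ref{mainchord} and \ref{maincw}, so that the deletion and colon pieces are each handled by the inductive hypothesis on smaller graphs together with Lemmas \ref{colsy}, \ref{seccoldesc}, \ref{lemreg}, and \ref{ttsym}.
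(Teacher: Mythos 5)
Your argument for the bound ${\rm reg}(I(G)^{\{3\}})\leq\lfloor n/2\rfloor+3$ is essentially the paper's: Lemma \ref{rfirst} with $s=2$ (and indeed $\mathfrak{ht}(I(G))\geq 3$ forces ${\rm match}(G)\geq 2$, since the endpoints of a maximal matching form a vertex cover, so $3\leq\mathfrak{ht}(I(G))\leq 2\,{\rm match}(G)$), Lemma \ref{ttsym} to replace $I(G)^{\{3\}}+I(G)^{[2]}$ by $I(G)^{[2]}$, Lemma \ref{lemreg} for the colon terms, and a bound on ${\rm reg}(I(G)^{[2]})$; the paper cites \cite[Theorem 2.11]{ehhs} for ${\rm reg}(I(G)^{[2]})\leq{\rm match}(G)+2$, while your route via Corollary \ref{twth}(i) and \cite[Theorem 6.7]{hv} gives ${\rm match}(G)+3\leq\lfloor n/2\rfloor+3$, which also suffices. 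That half is correct.

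The gap is in your proof of ${\rm reg}(I(G)^{\{3\}})\leq{\rm reg}(I(G))+4$. Your plan rests on the unproved inequality ${\rm reg}(I(G'))\leq{\rm reg}(I(G))$, where $G'$ is obtained from $G\setminus\{x_1,x_2\}$ by adding the edges $x_px_q$ with $x_p\in N_{G-x_2}(x_1)$ and $x_q\in N_{G-x_1}(x_2)$; you flag this yourself, but nothing in the paper's toolkit supplies it (Proposition \ref{ind} applies only to induced subgraphs, and \cite[Lemma 2.10]{b} only removes variable generators), and adding edges can change regularity in either direction, so an even-connection-type argument would be needed, not a monotonicity appeal. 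The intermediate step of passing from $(I(G'),A)^{\{2\}}$ to its colon by the edge $e_{i_2}$ ``at a cost of at most $2$'' is likewise unjustified: colon ideals need not have smaller regularity, and \cite[Lemma 2.10]{b} does not cover colons by a quadratic monomial. Your fallback (a global induction on $n$ via \cite[Lemma 2.10]{dhs}) is only a sketch. The paper avoids all of this: it quotes \cite[Corollary 3.9]{s12}, which gives ${\rm reg}(I(G)^{(3)})\leq{\rm reg}(I(G))+4$, and then applies Proposition \ref{compsq} to pass to the squarefree part $I(G)^{\{3\}}$ --- exactly the same two-step device you used implicitly when invoking Corollary \ref{twth}. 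Replacing your first half by that citation-plus-\ref{compsq} argument closes the gap.
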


\begin{proof}
We know from \cite[Corollary 3.9]{s12} that ${\rm reg}(I(G)^{(3)})\leq {\rm reg}(I(G))+4$. Hence, we deduce from Proposition \ref{compsq} that$${\rm reg}(I(G)^{\{3\}})\leq{\rm reg}(I(G))+4.$$

In order to prove the inequality ${\rm reg}(I(G)^{(3)})\leq \lfloor n/2\rfloor+3$, note that by Lemma \ref{ttsym}, we have $I(G)^{\{3\}}+I(G)^{[2]}=I(G)^{[2]}$. Thus, we conclude from Lemmata \ref{rfirst} and \ref{lemreg} that$${\rm reg}(I(G)^{\{3\}})\leq\min\big\{\frac{n}{2}+3, {\rm reg}(I(G)^{[2]})\big\}.$$The assertion now follows from \cite[Theorem 2.11]{ehhs} which states that$${\rm reg}(I(G)^{[2]})\leq {\rm match}(G)+2\leq \frac{n}{2}+2.$$
\end{proof}

Let $G$ be a graph with $n$ vertices such that $I(G)^{\{3\}}\neq 0$. In Theorem \ref{mainpo3}, we proved that $\big\lfloor\frac{n}{2}\big\rfloor+3$ and ${\rm reg}(I(G))+4$ are upper bounds for the regularity of $I(G)^{\{3\}}$. The following examples show that both bounds are sharp.

\begin{exmps} \label{sharp3}
\begin{itemize}
\item[(1)] Let $G=C_5$ be the $5$-cycle graph. Then one may easily check that $I(G)^{\{3\}}=(x_1x_2x_3x_4x_5)$. Therefore,$${\rm reg}(I(G)^{\{3\}})=5=\bigg\lfloor\frac{|V(G)|}{2}\bigg\rfloor+3$$which is strictly smaller that ${\rm reg}(I(G))+4$.
\item[(2)] Let $G$ be the complete bipartite graph $K_{3,5}$. Since the complementary graph $\overline{G}$ is chordal, it follows from \cite[Theorem 1]{f}  that ${\rm reg}(I(G))=2$. Moreover, as $G$ is a bipartite graph, we deduce from \cite[Theorem 5.9]{svv} that $I(G)^{\{3\}}=I(G)^{[3]}$. Consequently, $I(G)^{\{3\}}$ is generated in degree $6$. Thus using Theorem \ref{mainpo3}, we have$${\rm reg}(I(G)^{\{3\}})=6={\rm reg}(I(G))+4<\bigg\lfloor\frac{|V(G)|}{2}\bigg\rfloor+3.$$
\end{itemize}
\end{exmps}






\begin{thebibliography}{10}

\bibitem {b} A. Banerjee, The regularity of powers of edge ideals, {\it J. Algebraic Combin.} {\bf 41} (2015), 303--321.

\bibitem {bbh} A. Banerjee, S. Beyarslan, H. T. H${\rm \grave{a}}$, Regularity of powers of edge ideals: from local properties to global bounds, {\it Algebraic Combinatorics} {\bf 3} (2020), 839--854.

\bibitem {bn} A. Banerjee, E. Nevo, Regularity of edge ideals via suspension, preprint.

\bibitem {bht} S. Beyarslan, H. T. H${\rm \grave{a}}$, T. N. Trung, Regularity of powers of forests and cycles, {\it J. Algebraic Combin.} {\bf 42} (2015), 1077--1095.

\bibitem {cw} K. Cameron, T. Walker, The graphs with maximum induced matching and maximum matchingthe same size, {\it Discrete Math.} {\bf 299} (2005), 49--55.

\bibitem {dhs} H. Dao, C. Huneke, J. Schweig, Bounds on the regularity and projective dimension of ideals associated to graphs, {\it J. Algebraic Combin.} {\bf 38} (2013), 37--55.

\bibitem {ehhs} N. Erey, J. Herzog, T. Hibi, S. Saeedi Madani, Matchings and squarefree powers of edge ideals, {\it J. Combin. Theory, Ser. A} {\bf 188} (2022), 105585.

\bibitem {eh} N. Erey, T. Hibi, Squarefree powers of edge ideals of forests, {\it Electron. J. Combin.}, {\bf 28} (2021), no. 2, Research Paper P2.32.

\bibitem {f} R. Fr${\rm \ddot{o}}$berg, On Stanley-Reisner rings, in: Topics in algebra, {\it Banach Center Publications}, {\bf 26}
    Part 2, (1990), 57--70.

\bibitem {hv} H. T. H${\rm \grave{a}}$, A. Van Tuyl, Monomial ideals, edge ideals of hypergraphs, and their graded Betti numbers, {\it J. Algebraic Combin.} {\bf 27} (2008), 215--245.

\bibitem {ha} H. T. H${\rm \grave{a}}$, Regularity of squarefree monomial ideals, In S.M. Copper and S. Sather-Wagstaff (Ed.) Connections Between Algebra, Combinatorics, and Geometry. Springer Proceedings in Mathematics  Statistics {\bf 76} (2014), 251--276.

\bibitem {hh} J. Herzog, T. Hibi, {\it Monomial Ideals}, Springer-Verlag,
    2011.

\bibitem {hh1} J. Herzog, T. Hibi, An upper bound for the regularity of powers of edge ideals, {\it Math. Scand.} {\bf 126} (2020), 165--169.

\bibitem {hhz} J. Herzog, T. Hibi, X. Zheng, Dirac's theorem on chordal graphs and Alexander duality, {\it European J. Combin.} {\bf 25}
    (2004), no. 7, 949--960.

\bibitem {hhko} T. Hibi, A. Higashitani, K. Kimura, A. B. O'Keefe, Algebraic study on Cameron-Walker graphs, {\it J. Algebra} {\bf 422} (2015), 257--269.

\bibitem {js} A. V. Jayanthan, S. Selvaraja, Upper bounds for the regularity of powers of edge ideals of graphs, {\it J. Algebra} {\bf 574} (2021),
    184--205.

\bibitem {p'} I. Peeva, {\it Graded syzygies}, Algebra and Applications, vol. 14, Springer-Verlag London Ltd., London, 2011.

\bibitem {s3} S. A. Seyed Fakhari, Symbolic powers of cover ideal of very well-covered and bipartite graphs, {\it Proc. Amer. Math. Soc.}, {\bf 146} (2018), 97--110.

\bibitem {s4} S. A. Seyed Fakhari, Depth, Stanley depth and regularity of ideals associated to graphs, {\it Arch. Math. (Basel)}, {\bf 107} (2016), 461--471.

\bibitem {s9} S. A. Seyed Fakhari, An upper bound for the regularity of symbolic powers of edge ideals of chordal graphs, {\it Electron. J. Combin.}, {\bf
    26} (2019), no. 2, Research Paper P2.10.

\bibitem {s7} S. A. Seyed Fakhari, Regularity of symbolic powers of edge ideals of unicyclic graphs, {\it J. Algebra}, {\bf 541} (2020), 345--358.

\bibitem {s8} S. A. Seyed Fakhari, Regularity of symbolic powers of edge ideals of Cameron-Walker graphs, {\it Comm. Algebra}, {\bf 48} (2020), no. 12, 5215--5223.

\bibitem {s10} S. A. Seyed Fakhari, Regularity of symbolic powers of edge ideals of chordal graphs, {\it Kyoto J. Math.} {\bf 62} (2022), 753--762.

\bibitem {s12} S. A. Seyed Fakhari, On the regularity of small symbolic powers of edge ideals of graphs, {\it Math. Scand.}, to appear.

\bibitem {s11} S. A. Seyed Fakhari, On the Castelnuovo-Mumford regularity of squarefree powers of edge ideals, submitted.

\bibitem {sy} S. A. Seyed Fakhari, S. Yassemi, Improved bounds for the regularity of edge ideals of graphs, {\it Collect. Math.} {\bf 69} (2018), 249--262.

\bibitem {svv} A. Simis, W. Vasconcelos, R. H. Villarreal, On the ideal theory ofgraphs, {\it J. Algebra} {\bf 167} (1994), 389--416.

\bibitem{w} D. B. West, {\it Introduction to Graph Theory}, 2nd edition, Prentice hall, 2001.

\bibitem {wo}  R. Woodroofe, Matchings, coverings, and Castelnuovo-Mumford regularity, {\it J. Commut. Algebra} {\bf 6} (2014), 287--304.

\end{thebibliography}
\end{document}